\DeclareMathOperator{\Gal}{Gal}
\DeclareMathOperator{\Res}{Res}
\DeclareMathOperator{\Sub}{Sub}
\DeclareMathOperator{\Ind}{Ind}
\DeclareMathOperator{\Tr}{Tr}
\newcommand{\C}{\mathbb{C}}
\newcommand{\R}{\mathbb{R}}
\newcommand{\N}{\mathbb{N}}
\newcommand{\F}{\mathbb{F}}
\newcommand{\PP}{\mathbb{P}}
\newcommand{\RR}{\mathcal{R}}
\newcommand{\LL}{\mathcal{L}}
\newcommand{\U}{\mathcal{U}}
\newcommand{\eps}{\varepsilon}
\newcommand{\1}{\mathbf{1}}
\def\P{\mathcal{P}}
\newcommand{\lrangle}[1]{\langle #1 \rangle}
\newcommand{\w}[1]{\widehat{#1}}
\newtheorem{thm}{Theorem}[section]
\newtheorem{lem}[thm]{Lemma}
\newtheorem{prop}[thm]{Proposition}
\newtheorem{cor}[thm]{Corollary}
\newenvironment{mythm}[1]
{\innercustomthm}
{\endinnercustomthm}
\theoremstyle{definition}
\newtheorem{dfn}[thm]{Definition}
\theoremstyle{remark}
\newtheorem{rem}[thm]{Remark}
\newtheorem{prob}[thm]{Problem}
\newtheorem{que}[thm]{Question}
\newtheorem{conj}[thm]{Conjecture}
\begin{document}

\title{Equivariant linear isometries operads over Abelian groups}
\author{Ethan MacBrough}
\email{emacbrough@gmail.com}

\begin{abstract}
	$N_\infty$-operads are an equivariant generalization of $E_\infty$-operads introduced by Blumberg and Hill to study structural problems in equivariant stable homotopy theory. In the original paper introducing these objects, Blumberg and Hill raised the question of classifying $N_\infty$-operads that are weakly equivalent to a particularly nice kind of $N_\infty$-operad called a linear isometries operad. For some groups there is a known classification of linear isometries operads up to weak equivalence in terms of certain combinatorially defined objects called saturated transfer systems, but this classification is known to be invalid in general. Various authors have made incremental progress on understanding the domain of validity for this classification, but even among cyclic groups the validity is unknown in general. We determine the essentially all the finite Abelian groups for which the classification is valid using techniques from algebra and extremal combinatorics.
\end{abstract}

\maketitle
\tableofcontents

\section{Introduction}

Commutative ring spectra are objects representing (co)homology theories with a (graded-)commutative multiplication operation such as the cup product on singular cohomology. The basic datum of a commutative ring spectrum is a spectrum\footnote{Non-experts may pretend throughout this paper that ``spectrum'' is synonymous with ``space''; the distinction is not particularly relevant in our intuitive context.} together with a unit and binary operation satisfying the monoid axioms and the commutativity axiom up to homotopy. Starting around the 1970s, homotopy theorists began realizing that generic commutative ring spectra can be quite pathological, and to prove stronger results one needs to keep track of the explicit homotopies realizing the axioms, as well as the homotopies between those homotopies, etc. This complicated homotopy data is most conveniently packaged into an operad, so that a structured ring spectrum becomes simply an algebra in spectra over some operad.

In the field of brave new algebra, the operadic approach to commutative ring spectra is mostly important since (1) algebras over an operad have excellent formal properties, enabling one to define powerful algebraic constructions such as localization, and (2) for most commutative ring spectra coming from nature, it's easy to identify a nice operad acting on it. By ``nice operad'' here we mean an $E_\infty$-operad, or in the equivariant context an $N_\infty$-operad. The exact definitions of these terms are not terribly important in the present context, but we point out that all $E_\infty$-operads are ``weakly equivalent''\footnote{Weak equivalence of operads essentially means that the structured ring spectra they parameterize behave the same. More specifically, given two weakly equivalent operads $\mathcal{O},\mathcal{O}'$ and an action of $\mathcal{O}$ on a cofibrant spectrum $X$, we can canonically construct an action of $\mathcal{O}'$ on $X$ with homotopy-equivalent underlying ring structure.}, whereas for a fixed symmetry group $G$ there are typically many inequivalent $N_\infty$-operads for $G$-equivariant spectra. This is one of the major difficulties in developing equivariant homotopy theory.

There's also a less common but occasionally quite powerful application of operadic methods in the study of ring spectra, which might be called the ``intertwining method''. Often the ring spectra we want to study have very complicated geometry, but admit the action of a rather concrete ``model'' operad whose geometry is relatively easy to understand, such as the little disks or linear isometries operads. The monadic action of the operad will then partially intertwine the simple geometry of the operad with the subtle geometry of the spectrum, and we can sometimes take advantage of this to extract non-trivial information about ring spectra. An example of this technique is the ``generalized EHP sequence'' (see Section 4 of May's expository article on infinite loop spaces for more detail~\cite{may}).

Note that when applying this technique we actually care about the specific operad we're working over, not just its weak equivalence class. Of course since all $E_\infty$-operads are weakly equivalent, given a structured commutative ring spectrum $X$, then after possibly replacing $X$ with a cofibrant resolution we can consider $X$ as an algebra over whichever $E_\infty$-operad is most convenient. Thus with a bit of extra care, in the non-equivariant world the intertwining method is always available in the study of structured commutative ring spectra.

Unfortunately, this is not always the case in the equivariant world. We can define natural $N_\infty$-analogues of our favorite $E_\infty$-operads, but since there are many weak equivalence classes of $N_\infty$-operads, when we want to apply the intertwining method to an algebra over some general $N_\infty$-operad it's not immediately clear to what extent we can assume the operad has one of these model geometries. Note that even if we start out with a spectrum with an explicit action of some linear isometries operad, localization can change the weak equivalence class of the $N_\infty$-operad we're working over~\cite{localization}, so commutative ring spectra over general $N_\infty$-operads do show up in practice. Thus a fundamental question is whether or not we can classify which $N_\infty$-operads are weakly equivalent to one of our favorite geometrically simple operads, such as the equivariant little disks or linear isometries operads. In this paper we will focus on this question in the case of linear isometries operads; the analogous question for Steiner operads, which are a slightly modified version of little disks operads, is solved in~\cite{rubin}.

This question was posed as far back as the original paper of Blumberg and Hill introducing $N_\infty$-operads~\cite{blumberg-hill}, where the authors showed that $N_\infty$-operads weakly equivalent to an equivariant linear isometries operad have an extra property that didn't seem to be shared by generic $N_\infty$-operads, but at the time the authors were unable to prove that this property is non-universal due to the complicated nature of the objects they used to represent weak equivalence classes of $N_\infty$-operads. Later work by various authors led to simplifications of these objects, until eventually it was realized by Balchin et al.~\cite{associahedra} that weak equivalence classes of $N_\infty$-operads can be represented by simple combinatorial objects called \emph{transfer systems}.

\begin{dfn}
	Let $G$ be a finite group, and let $\Sub(G)$ be the lattice of subgroups of $G$. A \emph{transfer system} on $G$ is a partial order $\RR$ on $\Sub(G)$ refining the inclusion relation $\leqslant$, such that
	\begin{enumerate}
		\item For all $(K,H)\in\RR$ and $g\in G$, we have $(K^g,H^g)\in\RR$, where $K^g := g^{-1}Kg$.
		\item For all $(K,H)\in\RR$ and $L\leqslant H$, we have $(K\wedge L, L)\in\RR$.
	\end{enumerate}
\end{dfn}

The transfer system corresponding to an $N_\infty$-operad describes the ``norms'' available for spectra over that operad. Recall that given a $G$-module $A$, for every pair of subgroups $K\leqslant H\leqslant G$ we have transfer maps
\[T_K^H\colon A^K\to A^H\]
sending $x\in A^K$ to $\sum_{g\in H/K} gx$, and these maps satisfy nice algebraic properties like transitivity and Mackey's formula. If the binary operation on $A$ is instead only commutative up to homotopy in some sense, then these algebraic properties may no longer be valid in general. But if $A$ is a commutative ring spectrum over an $N_\infty$-operad then analogous maps can be defined for certain pairs $K\leqslant H$, the collection of which forms a transfer system; moreover, this transfer system determines the operad up to weak equivalence. Furthermore, every transfer system is realized by some $N_\infty$-operad in this way, as shown independently in~\cite{realization2},~\cite{realization3}, and~\cite{realization1}.

In the formalism of transfer systems, the special property of linear isometries operads observed by Blumberg and Hill is called \emph{saturation}.

\begin{dfn}
	A transfer system $\RR$ on a group $G$ is \emph{saturated} if whenever $(K, H)\in\RR$ and $K\leqslant L\leqslant H$ we also have $(L, H)\in\RR$ (note that $(K, L)\in\RR$ follows automatically from property 2 in the definition of a transfer system).
\end{dfn}

Using this formalism, Rubin~\cite{rubin} was then able to confirm that saturated transfer systems are quite special among general transfer systems. Furthermore, Rubin showed that for cyclic $p$-groups every saturated transfer system is realized by a linear isometries operad, but for distinct primes $p,q$ every saturated transfer system on $C_{pq}$ is realized by a linear isometries operad if and only if $p,q>3$. These results and some heuristic reasoning led Rubin to conjecture

\begin{conj}[Rubin's saturation conjecture]
	For all $n$ there exists some $f(n)$ such that for any collection of distinct primes $p_1,...,p_n\geqslant f(n)$ and any choice of $m_1,...,m_n\in\N$, every saturated transfer system on $G = G_{p_1^{m_1}\cdots p_n^{m_n}}$ is realized by a linear isometries operad.
\end{conj}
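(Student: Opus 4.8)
The plan is to convert the problem, via Rubin's characterization of linear isometries transfer systems, into a purely combinatorial statement about subsets of $\widehat{G}$, and then to build the required subset by induction along a maximal chain of subgroups, with an extremal counting bound supplying the threshold $f(n)$.

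First I would recast everything algebraically. Write $A = \widehat{G}$ for the Pontryagin dual of $G = C_{p_1^{m_1}\cdots p_n^{m_n}}$. A linear isometries operad $\mathcal{L}(U)$ is determined up to weak equivalence by the set $S\subseteq A$ of classes of irreducible summands of the $G$-universe $U$ (an arbitrary subset with $0\in S$ and $S=-S$), and its transfer system comes out to be
\[
\RR_S \;=\; \bigl\{\,(K,H)\in\Sub(G)^2 \;:\; K\leqslant H,\ S + K^{\perp} = S + H^{\perp}\,\bigr\},
\]
where $B^{\perp}\leqslant A$ is the annihilator of $B\leqslant G$ and $S+B$ is the union of the $B$-cosets meeting $S$. (This is essentially Rubin's theorem restated: a graph-subgroup-fixed linear isometric \emph{embedding} forces a genuine isomorphism of representations, which is precisely why $\RR_S$ is saturated.) Dually I would record that saturated transfer systems on an abelian $G$ are exactly the kernel operators on $\Sub(G)$ — the monotone idempotent maps $c$ with $c(H)\leqslant H$ — equivalently the join-closed subsemilattices of $\Sub(G)$; the conjecture then becomes: for each such $c$ there is an admissible $S$ with $\RR_S = \{(K,H):c(H)\leqslant K\leqslant H\}$.

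Second I would induct on $|G|$, the base case being trivial and the cyclic $p$-group case being Rubin's theorem. Given saturated $\RR$ on $G$, pick a prime $p=p_n$ with $m_n\geqslant 1$ and restrict $\RR$ to the index-$p$ subgroup $G_0\leqslant G$; this restriction is again saturated, so by induction it equals $\RR_{S_0}$ for an admissible $S_0\subseteq\widehat{G_0}$ (provided all $p_i\geqslant f(n)$). Since $\widehat{G}$ is an extension of $\widehat{G_0}$ by a $\Z/p$, the task is to \emph{lift} $S_0$ through this extension — choosing on each fiber which of the $p$ lifts to include — so that (i) every relation $(K,H)\in\RR$ is realized (certain $K^{\perp}$-cosets meeting $S$ must have \emph{all} their $H^{\perp}$-subcosets met), (ii) every non-relation $(K,H)\notin\RR$ with $K\leqslant H$ is realized (some $K^{\perp}$-coset meeting $S$ must contain an $H^{\perp}$-subcoset disjoint from $S$), and (iii) $0\in S=-S$. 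The monotone conditions (i) are easy to arrange by enlarging $S$; the content is the simultaneous fulfilment of the separation conditions (ii) under the symmetry constraint (iii). The technical heart, which I expect to be the main obstacle, is to show this lift always exists once $p$ is large relative to $n$: here $S$ amounts to a choice of subsets at the nodes of a coset tree of depth $\leqslant n$ and branching $\leqslant p^{m_n}$, each non-relation of $\RR$ imposes a ``this node must see a missing child'' constraint, the number of such constraints is bounded once $n$ is fixed, and the room at each node is $\Theta(p)$; a union-bound/deletion argument, or a Hall-type matching over antipodally paired nodes, should then yield a valid \emph{symmetric} selection — its failure for small $p$ being exactly the scarcity of proper $(-1)$-invariant subsets of $\Z/p$ (this is what breaks $C_{pq}$ for $p,q\leqslant 3$). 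The delicate points are that the constraints interact across primes (as $S$ need not be a product set, the CRT factorization only partly decouples things) and that the count must survive being forced onto antipodal pairs; extracting a clean explicit $f(n)$, and verifying it is uniform in the exponents $m_i$ as the conjecture demands, is the real crux.

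Finally, for the sharpness needed to isolate ``essentially all'' valid groups, I would exhibit, for primes just below the threshold, explicit saturated transfer systems generalising the minimal examples $\{(1,C_p)\}$ on $C_{pq}$ whose realizing $S$ is forced by (i)--(ii) into a pattern that (iii) forbids, and confirm non-realizability by a short direct computation.
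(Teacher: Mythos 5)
Your reduction to subsets $S\subseteq\widehat{G}$ with $0\in S=-S$ and the formula $\RR_S=\{(K,H):S+K^\perp=S+H^\perp\}$ is a correct restatement of the problem, but the argument after that is a plan, not a proof, and its central step fails as described. The lifting lemma you call ``the technical heart'' is exactly the content of the conjecture and is left unproved; worse, the heuristic offered to support it does not hold up. First, the number of constraints is not ``bounded once $n$ is fixed'': the subgroup lattice of $G=C_{p_1^{m_1}\cdots p_n^{m_n}}$ has $\prod_i(m_i+1)$ elements, so the family of relations and non-relations grows without bound as the exponents grow, and a per-fiber union bound with ``room $\Theta(p)$'' has no uniform-in-$m_i$ content — which you flag as ``the real crux'' but never resolve. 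Second, conditions (i) and (ii) are in genuine tension: enlarging $S$ to force $S+K^\perp=S+H^\perp$ for the relations of $\RR$ can destroy the missing cosets that witness the non-relations, and your induction hypothesis gives only bare existence of $S_0$ realizing $\RR|_{G_0}$, with no quantitative slack (no guarantee that the witnessing cosets for non-relations in $G_0$ are missed ``with room to spare''), so there is nothing to feed a deletion or Hall-type argument. Third, the cross-prime entanglement you acknowledge is precisely the known obstruction — saturated transfer systems on $C_{p^mq^n}$ are already very complicated, which is why earlier work (Hafeez et al., Bannwart) needed long classifications even for two primes — and an induction that fixes one specific $\RR$ and lifts prime by prime runs head-on into it.

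For comparison, the paper sidesteps both problems by making the construction independent of the transfer system: it builds, for each cyclic $p$-group with $p\geqslant 5$, a ``tight pair'' $(D,J)$ (a sub-inductor $J$ together with an $R$-stable diagram $D$ with $I_K^H(D(K))\not\subseteq D(H)\cup\Res_J(H)$ and $D(H)\not\subseteq\Res_J(H)$), shows such pairs tensor across coprime factors (\cref{localization lemma}), and then a single iterative stabilization machine (\cref{mainthm}) — alternately closing up under $(I,\RR)$-transfers and under $J$ until the diagram stabilizes — realizes \emph{every} saturated $\RR$ at once, with the residue $\Res_J(H)$ providing exactly the quantitative control over how much cofibrant levels can grow that your sketch lacks. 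The elementary count needed is just $p^k>2+2^k$ for $p\geqslant 5$, and the resulting bound is uniform: all primes coprime to $6$, independent of both $n$ and the exponents, which is strictly stronger than the conjectured statement. If you want to salvage your route, you would need to strengthen the inductive hypothesis from ``some $S_0$ exists'' to a quantitative surplus statement stable under the lift — at which point you are essentially reinventing tight pairs.
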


Intuitively, for a cyclic group $G$ every saturated transfer system \emph{should} be realized by a linear isometries operad, but if the primes dividing $|G|$ are too small then it might turn out there's not quite enough ``elbow room'' to construct the necessary $G$-universe. In this paper we will use the term saturation conjecture in a broader context, and say that some group $G$ \emph{satisfies the saturation conjecture} if every saturated transfer system on $G$ is realized by a linear isometries operad.

A later paper by Hafeez et al.~\cite{HMOO} continued investigation of this problem, and using an explicit classification of saturated transfer systems on $C_{p^m q^n}$ for primes $p\neq q$ were able to prove the saturation conjecture for groups of the form $C_{p q^n}$ assuming $p,q>3$. Even just this minor generalization of Rubin's results required significant effort and careful analysis. More recently, Bannwart~\cite{bannwart} proved the saturation conjecture for all groups of the form $C_{p^m q^n}$, again assuming $p,q>3$. Bannwart's techniques are similar to those of Hafeez et al., again using the explicit classification of saturated transfer systems but with a much longer and more complicated inductive argument. At the moment we have no explicit classification of saturated transfer systems for arbitrary cyclic groups, so these techniques cannot be extended further. Finding such a classification does not seem impossibly difficult, but regardless, even if such a classification is found, a full proof of Rubin's conjecture along these lines would likely be extremely complicated.

In this paper we develop a machine for constructing realizations of saturated transfer systems in a way that is independent of the transfer system given as input, allowing us to prove the saturation conjecture for many groups without needing a classification of saturated transfer systems. Using this machine we prove two existence results that go far beyond the current state of the art:

\begin{mythm}{\ref{sat conj for cyclic groups}}
	Let $G$ be a (finite) cyclic group of order coprime to $6$. Then every saturated transfer system on $G$ can be realized by a linear isometries operad.
\end{mythm}

\begin{mythm}{\ref{thm-ranktwo}}
	There exists some function $f\colon\N\to \N$ with the following property:
	
	For any (finite) Abelian group $G$ of rank two\footnote{Here and throughout this paper, we say a finite group $G$ has rank $n$ if $G$ can be generated by at most $n$ elements. Thus in particular an Abelian group of rank two just means a group isomorphic to the direct product of two cyclic groups.} such that for all primes $p$ dividing $|G|$ we have $p\geqslant f(\log_p(|P|))$ where $P$ is the Sylow $p$-subgroup $P\leqslant G$, then every saturated transfer system on $G$ can be realized by a linear isometries operad.
\end{mythm}

The statement of \cref{thm-ranktwo} given here is a bit convoluted. The point is that if $G$ is a rank two Abelian $p$-group such that $|G|=p^n$, then $G$ will satisfy the saturation conjecture as long as $p\geqslant f(n)$. Moreover, a direct product of coprime $p$-groups of this form also satisfies the saturation conjecture.

Note that \cref{sat conj for cyclic groups} entirely resolves Rubin's conjecture and in fact is much stronger than what Rubin conjectured, since the bound on the primes dividing $G$ is independent of the number of prime factors of $G$. Once our abstract machinery is developed, our proof of \cref{sat conj for cyclic groups} is remarkably simple. \cref{thm-ranktwo} essentially says that Rubin's conjecture can be extended to rank two groups, although here we do require asymptotic bounds on the primes dividing $|G|$, and the bounds depend on the size of the individual primary parts rather than their number.

We obtain a fairly explicit bound for $f$; attempting to make this bound fully explicit is rather annoying, but for large $n$ we should have
\[f(n)\leqslant {}^{n+1}(n+C)\]
for some small constant $C\geqslant 1$, where ${}^nx$ is the $n$-th tetration of $x$. This bound is of course horrendously large even for $n=3$ and $C=1$. We didn't make a serious effort to optimize this bound, and it may be the case that our construction actually guarantees
\[f(n)\leqslant {}^{n+1}C\]
for some constant $C>1$. But regardless the non-elementary recursiveness of these bounds seems fundamental to our construction\footnote{A function $f$ is \emph{elementary recursive} if $f(n)\leqslant {}^k n$ for some fixed $k$.}. On the other hand, the author was unable to find a good lower bound on $f$, so it's certainly possible that a modified construction could give an elementary recursive bound on $f$. We leave this as an open question.

\begin{que}
	For each $n\in\N$, let $f(n)\in\N$ be the minimal bound such that for all $p\geqslant f(n)$ and all rank two Abelian $p$-groups $G$ with $|G|\leqslant p^n$, $G$ has a tight pair (see \cref{tight dfn} for the terminology).
	
	Is $f(n)$ elementary recursive?
\end{que}

Unlike the proof of \cref{sat conj for cyclic groups}, even with our abstract machinery proving \cref{thm-ranktwo} is quite difficult and leverages techniques from probability theory and extremal combinatorics. Attempting to extend these arguments to higher rank Abelian groups would quickly become extremely messy. But as it happens such an endeavor would be doomed from the start anyway; indeed, in \cref{negative} we prove that higher rank Abelian groups \emph{never} satisfy the saturation conjecture. Thus our results essentially solve the saturation problem for all Abelian groups, modulo improving the bounds in \cref{thm-ranktwo}. For groups that do not satisfy the saturation conjecture, there still remains the (likely extremely difficult) problem of classifying which saturated transfer systems do get realized. Also of course the case of non-Abelian groups remains completely untouched; with slight modifications our techniques are likely applicable to non-Abelian groups, but the author has not thought about this case in any real detail. A good starting place for readers looking for a tractable research problem to work on might be
\begin{que}
	For which $n$ does the dihedral group $G=D_n$ satisfy the saturation conjecture?
\end{que}

As a last remark, in the context of Rubin's classification of the transfer systems corresponding to Steiner operads, our results have a rather strange and amusing implication. By Theorem 4.11 of~\cite{rubin}, a transfer system $\RR$ on a finite cyclic group $G$ is realized by a Steiner operad if and only if $\RR$ is the smallest transfer system containing $\{(K,G):K\in S\}$ for some $S\subseteq \Sub(G)$. Under the duality of~\cite{FOOQW} it's not hard to show that $\RR$ has this property if and only if its dual transfer system is saturated. Thus \cref{sat conj for cyclic groups} implies

\begin{cor}
	Let $G$ be a finite cyclic group of order coprime to $6$. Under FOOQW duality, linear isometries operads over $G$ are dual to Steiner operads over $G$ and visa versa.
\end{cor}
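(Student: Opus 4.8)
The plan is to translate the statement into the language of transfer systems and then assemble known facts. Since $G$ is cyclic, $\Sub(G)$ is the divisor lattice of $|G|$ and hence self-dual; fix an anti-automorphism $\phi\colon\Sub(G)\to\Sub(G)$ (so $|\phi(H)|=[G:H]$). By \cite{FOOQW} the lattice of transfer systems on $G$ is then self-dual, via an explicit involution $\RR\mapsto\RR^\vee$, and under the correspondence between $N_\infty$-operads up to weak equivalence and transfer systems, the assertion ``$\mathcal{O}$ is FOOQW-dual to $\mathcal{O}'$'' means ``$\RR_{\mathcal{O}}^\vee=\RR_{\mathcal{O}'}$''. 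Let $\mathcal{A}$ be the set of transfer systems on $G$ realized by a linear isometries operad and $\mathcal{B}$ the set realized by a Steiner operad. Because $(-)^\vee$ is an involution, it suffices to prove $\mathcal{A}^\vee=\mathcal{B}$.

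Three inputs give this. First, the transfer system of a linear isometries operad is always saturated \cite{blumberg-hill}, and by \cref{sat conj for cyclic groups} every saturated transfer system on $G$ is realized by a linear isometries operad, so $\mathcal{A}$ is exactly the set of saturated transfer systems on $G$; this is the only step using the hypothesis $\gcd(|G|,6)=1$. Second, by Theorem 4.11 of \cite{rubin} together with the fact that every transfer system is realized by some operad, $\mathcal{B}$ is exactly the set of transfer systems of the form ``the smallest transfer system containing $\{(K,G):K\in S\}$'' for some $S\subseteq\Sub(G)$; call these \emph{cotransfer-generated}. Third, and this is the only step needing a short argument, $\RR^\vee$ is cotransfer-generated if and only if $\RR$ is saturated. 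Granting this, $\mathcal{A}^\vee=\{\text{saturated}\}^\vee=\{\text{cotransfer-generated}\}=\mathcal{B}$, which is the corollary (and $\mathcal{B}^\vee=\mathcal{A}$ follows by applying $(-)^\vee$ again).

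For the remaining lemma I would use the reformulation that $\RR$ is saturated if and only if it satisfies a two-out-of-three condition: for all $K\leqslant L\leqslant H$, if two of $(K,L),(L,H),(K,H)$ lie in $\RR$ then so does the third --- this follows by combining the restriction and transitivity axioms with the definition of saturation. Then I would unwind FOOQW's construction of $\RR^\vee$ (pass to the left class $L_\RR$ of the weak factorization system on $\Sub(G)$ cut out by $\RR$, then transport along $\phi$): using that $\phi$ interchanges the trivial subgroup with $G$, the statement that $\RR^\vee$ is generated by its pairs of the form $(K,G)$ unwinds to exactly the two-out-of-three condition for $\RR$. As $\Sub(G)$ is a finite distributive lattice this is an elementary finite check once FOOQW's explicit duality is in hand, so there is no deep obstacle. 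The point worth emphasizing is the asymmetry: the inclusion $\mathcal{A}^\vee\subseteq\mathcal{B}$ needs only Blumberg--Hill, Rubin's Theorem 4.11, and this lemma, and therefore holds for every finite cyclic group, whereas the reverse inclusion $\mathcal{B}^\vee\subseteq\mathcal{A}$ is precisely where \cref{sat conj for cyclic groups} --- and with it the coprimality hypothesis --- is genuinely used.
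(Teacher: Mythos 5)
Your argument is correct and is essentially the paper's own (the paper only sketches it in the introduction): combine Rubin's Theorem 4.11 characterizing Steiner-realizable transfer systems, the observation that under the duality of \cite{FOOQW} these are exactly the duals of saturated transfer systems, and \cref{sat conj for cyclic groups} to identify the saturated ones with those realized by linear isometries operads. Your extra remarks --- the two-out-of-three reformulation of saturation and the observation that only the inclusion requiring every saturated system to be realized uses the coprimality hypothesis --- are accurate refinements of the same route rather than a different proof.
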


\subsection*{Acknowledgements}

The author thanks Kyle Ormsby and Scott Balchin for their keen feedback on numerous early drafts and for pointing him to this problem in the first place.

\section{Technical machinery}\label{sec:2}

Let $G$ be a finite group. Recall that a $G$-universe is a representation $U$ of $G$ over $\R$ such that every isotypic component of $U$ is either trivial or has countably infinite dimension, and the isotypic component corresponding to the trivial character is non-trivial (i.e., $U$ contains a non-zero vector fixed by the action of $G$). Technically speaking a $G$-universe should also include the data of a $G$-invariant inner product $\lrangle{\cdot,\cdot}$, but since $G$ is finite every representation of $G$ can be equipped with a $G$-invariant inner product and any two invariant inner products on a representation are connected by an equivariant isometry, so we can leave the inner product structure implicit.

Any $G$-universe $U$ gives rise to a linear isometries operad $\LL(U)$; the construction of $\LL(U)$ is explained in Blumberg and Hill~\cite{blumberg-hill}, but we don't need to know this construction for the present work. All we need to know is that the transfer system $\Tr(U)$ corresponding to $\LL(U)$ is defined by
\[\Tr(U) = \{K\to H:\Ind_K^H(\Res_K^G(U))\hookrightarrow \Res_H^G(U)\};\]
see Theorem 4.18 of~\cite{blumberg-hill} and Section 5.1 of~\cite{rubin}. Here $\Res$ and $\Ind$ denote the standard restriction and induction functors on group representations.

The existence of non-trivial automorphisms makes $G$-universes somewhat annoying to work with directly. Let $\widehat{G}_\R$ be the set of isomorphism classes of irreducible real $G$-representations. Then we have a bijection between isomorphism classes of $G$-universes and subsets of $\widehat{G}_\R$ containing the trivial representation $[\R]\in\widehat{G}_\R$. Indeed, given $T\subseteq \widehat{G}_\R$ with $[\R]\in T$, we can construct a $G$-universe $U_T$ by
\[U_T := \bigoplus_{[V]\in T}V^\infty.\]
Conversely, given a $G$-universe $U$ we can define $T_U\subseteq\widehat{G}_\R$ to be the set of $[V]\in \widehat{G}_\R$ such that $V\hookrightarrow U$. These operations form a mutually inverse pair of bijections. Subsets of $\widehat{G}_\R$ are a bit more concrete than $G$-universes, but we can go a bit further following the underlying ideas from Section 5.2 of~\cite{rubin}. Indeed, complexification defines a bijection from isomorphism classes of irreducible real representations to $\Gal(\C/\R)$-orbits of irreducible complex representations (see Section 5 of~\cite{poonen}). Thus we can instead represent $G$-universes as subsets $S\subseteq \widehat{G}$ such that $[\C]\in S$ and $S$ is closed under the action of $\Gal(\C/\R)$.

Given a $G$-universe $U$, we let $S_U\subseteq \widehat{G}$ be the corresponding subset. Tracing definitions, we can describe $S_U$ more concretely as
\[S_U=\{[V]\in\widehat{G}:V\hookrightarrow U\otimes_\R \C\}.\]
From this description and the definition of a universe it follows that given two $G$-universes $U,U'$, there exists an embedding $U\hookrightarrow U'$ if and only if $S_U\subseteq S_{U'}$.

We let $V_G = \P(\w{G})$ be the lattice of subsets of $\w{G}$, considered as an equivariant lattice under the natural action of $\Gal(\C/\R)$. We also define the sublattice
\[\U_G\cong \P\left((\w{G}\setminus[\C])/\Gal(\C/\R)\right),\]
consisting of all subsets $S\in V_G$ which contain $[\C]$ and are closed under the action of $\Gal(\C/\R)$; i.e., the subsets that correspond to universes under the above equivalence. The above isomorphism shows that $\U_G$ is itself a Boolean lattice.

Given a subgroup inclusion $H\leqslant G$ we can define join-semilattice homomorphisms $I_H^G\colon V_H\to V_G$ and $R_H^G\colon V_G\to V_H$ as follows. Given $[V]\in\widehat{G}$, we define
\[R_H^G(\{[V]\}) = R_H^G([V]) := \{[W]\in\widehat{H}:W\hookrightarrow V|_H\}.\]
Similarly, given $[W]\in\widehat{H}$ we can define
\[I_H^G(\{[W]\}) = I_H^G([W]) := \{[V]\in\widehat{G}:V\hookrightarrow W^G\}.\]
We then define $R_H^G$ and $I_H^G$ on arbitrary inputs as the unique join-preserving extensions, i.e. for instance
\[R_H^G(S) = \bigcup_{[V]\in S}R_H^G([V]).\]
These maps are both $\Gal(\C/\R)$-equivariant and preserve membership in $\U$. Since induction and restriction both commute with complexification, it follows easily that under the equivalence between $G$-universes and elements of $\U_G$, $I_H^G\colon \U_H\to\U_G$ corresponds to ordinary induction of representations and $R_H^G\colon \U_G\to\U_H$ corresponds to restriction. We will typically write $R_H^G(\chi)$ to mean $R_H^G(\{\chi\})$ and similarly for $I_H^G(\chi)$.

Note that if $G$ is Abelian then $I_H^G$ is in fact a lattice homomorphism; indeed, by Frobenius reciprocity, given $S\in V_H$ and $[V]\in \widehat{G}$, we have $[V]\in I_H^G(S)$ if and only if $V|_H\hookrightarrow W$ for some $[W]\in S$. But $G$ Abelian implies $\dim_\C(W)=1$, so $V|_H\hookrightarrow W$ is equivalent to $[V|_H] = [W]$. Thus $I_H^G(S)$ is the preimage of $S$ under the restriction map $\widehat{G}\to\widehat{H}$. Since preimage maps always commute with all set operations, the claim follows. Note however that $R_H^G$ is generally not a lattice homomorphism; for arbitrary $G$, $R_H^G$ commutes with unions by construction, but $R_H^G$ rarely commutes with intersections.

Now given some $U\in V_G$, we can define the transfer system
\[\Tr(U) := \{K\leqslant H:I_K^HR_K^G(U)\subseteq R_H^G(U)\}.\]
If $U\in\U_G$, then this is the same as the transfer system of the linear isometries operad for the $G$-universe corresponding to $U$. Thus we've reduced to the following problem:

\begin{prob}
	Given a finite group $G$ and a saturated transfer system $\RR$ on $G$, does there exist some $U\in\U_G$ such that $\Tr(U) = \RR$?
\end{prob}

The definition of $\Tr(U)$ suggests investigating the ``diagram'' $D_U(H) := R_H^G(U)$, since then we have
\[\Tr(U) = \Tr(D_U) := \{K\leqslant H: I_K^H(D_U(K)) \subseteq D_U(H)\}.\]
Formally, a diagram $D$ is just an assignment for each $H\leqslant G$ of some $D(H)\in V_H$. Splitting the passage $U\mapsto \Tr(U)$ into a two-step process $U\mapsto D_U\mapsto \Tr(D_U)$ is convenient since the passage $D\mapsto \Tr(D)$ is much better-behaved than $U\mapsto \Tr(U)$. Indeed, since $I_K^H$ is a lattice homomorphism it's easily seen that given two diagrams $D,D'$ we have
\[\Tr(D\cap D')\supseteq \Tr(D)\cap\Tr(D').\]
Thus given any saturated transfer system $\RR$ and a diagram $D$, there's a unique minimal extension $\lrangle{D}^\RR_I\supseteq D$ such that $\RR\subseteq \Tr(\lrangle{D}^\RR_I)$; in fact, $\lrangle{D}^\RR_I$ can be constructed explicitly via the formula
\[\lrangle{D}^\RR_I(H) = \bigcup_{K\to H\in\RR} I_K^H(D(K)).\]
Note that given $U,U'\in V_G$, we do not necessarily have $D_{U\cap U'}\supseteq D_U\cap D_{U'}$ since $R_H^G$ does not commute with intersections. This is the root cause for the ``laughably poor properties'' of $U\mapsto \Tr(U)$ (quoted from~\cite{rubin}).

One might reasonably hope that for suitably constrained $D$ we can force $\Tr(\lrangle{D}^\RR_I) = \RR$. Indeed this hope can often be realized without too much manual labor. Unfortunately, even if $D = D_U$ for some $U\in V_G$ there's no reason to expect $\lrangle{D}^\RR_I = D_{U'}$ for some $U'\in V_G$, which at first glance seems to preclude using this as a technique for realizing transfer systems by linear isometries operads.

With a bit more consideration however, we can push this idea a bit further. Suppose for each $H\leqslant G$ we choose some ``lift''
\[J_H^G(\lrangle{D}^\RR_I(H))\in \U_G\]
such that
\[R_H^GJ_H^G(\lrangle{D}^\RR_I(H)) = \lrangle{D}^\RR_I(H).\]
This is always possible since $R_H^G$ is surjective when $G$ is Abelian. We can then define
\[U_1 = \bigcup_{H\leqslant G} J_H^G(\lrangle{D}^\RR_I(H)).\]
By construction we now have $D_{U_1}\supseteq \lrangle{D}^\RR_I$, but we can no longer guarantee $\Tr(D_{U_1})\supseteq\RR$. But now we can iterate this process to construct a sequence of universes
\[U_1\subseteq U_2\subseteq\cdots.\]
Since $\U_G$ is finite, this process must eventually stabilize at some $U_n$ for which $\Tr(D_{U_n})\supseteq \RR$.

Of course if we choose our lifts too arbitrarily then regardless of our initial choice of $D$ we will likely end up trivially with $\Tr(D_{U_n}) = (\leqslant)$. It turns out however that imposing some simple constraints on $J_H^G$ allows us to obtain some control over $D_{U_n}$ for all $n$, and with a bit of care this will allow us to force $\Tr(D_{U_n}) = \RR$. Of course it remains to show that we can construct a lifting mechanism satisfying these constraints; we show how to do this in \cref{sec:5}.

\subsection{Sub-inductors}

We now restrict $G$ to be a finite Abelian group.

\begin{dfn}\label{sub-ind def}
	Let $G$ be a finite Abelian group. A \emph{sub-inductor} $J = (J_K^H)_{K\leqslant H\leqslant G}$ is a collection of maps $J_K^H\colon V_K\to V_H$ indexed by intervals $K\leqslant H$ in $\Sub(G)$, subject to the following axioms:
	\begin{enumerate}
		\item\label{axiom-join} For all $K\leqslant H$, $J_K^H$ is an equivariant join-semilattice homomorphism.
		\item\label{axiom-trans} For all $K\leqslant L\leqslant H$,
		\[J_L^H\circ J_K^L = J_K^H.\]
		\item\label{axiom-cover} For all $K\leqslant H$ and all $U\in V_K$,
		\[R_K^H J_K^H(U) = U.\]
		\item\label{axiom-res} For all $K,L\leqslant H$ and all $U\in V_L$,
		\[R_K^H J_L^H(U) \subseteq J_{K\wedge L}^K R_{K\wedge L}^L(U).\]
		\item\label{axiom-unit} For all $K\leqslant H$,
		\[1_H\in J_K^H(1_K).\]
	\end{enumerate}
	
	If $J$ is a sub-inductor on $G$, then the \emph{residue} of $J$ on a $H\leqslant G$ is defined to be the set
	\[\Res_J(H) := \bigcup_{K<H} J_K^H(\w{K}).\]
\end{dfn}

Note that when $H=\1$ is the trivial subgroup of $G$, we always have $\Res_J(H)=\emptyset$. By equivariance and axiom~\ref{axiom-unit}, if $H\neq\1$ then $\Res_J(H)\in\U_H$.

\begin{rem}
	If $J$ is a sub-inductor, then it follows from axioms~\ref{axiom-join} and~\ref{axiom-cover} that for all $K\leqslant H$ and $U\in V_K$ we have
	\[J_K^H(U) = I_K^H(U)\cap J_K^H(\w{K}).\]
	Thus since $V_H$ is a Boolean (in particular, distributive) lattice and $I_K^H$ is a lattice homomorphism, it follows that $J_K^H$ is in fact a lattice homomorphism.
\end{rem}

The canonical example of a sub-inductor is the standard induction map $I = (I_K^H)_{K\leqslant H\leqslant G}$. Axiom~\ref{axiom-join} follows from additivity and the fact that complex conjugation commutes with induction. Axiom~\ref{axiom-cover} follows from Frobenius reciprocity and surjectivity of $R_K^H$, and axiom~\ref{axiom-res} follows from Mackey's formula. Axioms~\ref{axiom-trans} and~\ref{axiom-unit} are obvious. Note that for all non-trivial $\1\neq H\leqslant G$ we have $\Res_I(H) = \widehat{H}$. In light of \cref{mainthm}, we want to construct sub-inductors with residues as small as possible.

Explicitly constructing sub-inductors other than $I$ on more complicated groups can be quite challenging, but the following construction allows us to reduce the construction of sub-inductors on a general Abelian group to the case of Abelian $p$-groups.

Let $G,G'$ be two Abelian groups with $\gcd(|G|,|G'|)=1$. Then every subgroup of $G\times G'$ is uniquely of the form $H\times H'$ for $H\leqslant G$ and $H'\leqslant G'$. Furthermore every irreducible representation $\chi\in \w{G\times G'}$ is uniquely of the form $\chi = \tau\otimes \tau'$ for some $\tau\in\w{G}$ and $\tau'\in\w{G'}$.

Let $J$ (resp. $J'$) be a sub-inductor of $G$ (resp. $G'$). Given a pair of subgroups $K\times K'\leqslant H\times H'$ and a pair of irreducible representations $\tau\in\w{K},\tau'\in\w{K'}$, we define
\[(J\otimes J')_{K\times K'}^{H\times H'}(\tau\otimes \tau') = J_K^H(\tau)\otimes J_{K'}^{H'}(\tau')\]
and extend to $V_{K\times K'}$ by join-preservation.

\begin{prop}\label{prop-localization}
	Let $G,G'$ be two Abelian groups with $\gcd(|G|,|G'|)=1$, and let $J$ (resp. $J'$) be a sub-inductor of $G$ (resp. $G'$). Then $J\otimes J'$ is a sub-inductor of $G\times G'$ and
	\[\Res_{J\otimes J'}(H\times H') = \Res_J(H)\otimes \w{H'}\cup \w{H}\otimes\Res_{J'}(H').\]
\end{prop}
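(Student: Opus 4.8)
The plan is to reduce each assertion to the corresponding fact about $J$ and $J'$ by exploiting the way subgroups, characters, meets, complex conjugation, and the restriction maps all factor along the canonical decompositions $\Sub(G\times G')\cong\Sub(G)\times\Sub(G')$ and $\w{G\times G'}\cong\w{G}\times\w{G'}$ (these factorizations are exactly where $\gcd(|G|,|G'|)=1$ is used). Concretely, for subgroups $K\times K'\leqslant H\times H'$ and subsets $S\in V_H$, $S'\in V_{H'}$ write $S\otimes S':=\{\tau\otimes\tau':\tau\in S,\ \tau'\in S'\}\in V_{H\times H'}$; I would first record as a short preliminary lemma the facts that this operation distributes over arbitrary unions in each variable, that $(K\times K')\wedge(L\times L')=(K\wedge L)\times(K'\wedge L')$, that $\w{L\times L'}=\w L\otimes\w{L'}$ and $1_{L\times L'}=1_L\otimes 1_{L'}$, that complex conjugation satisfies $\overline{S\otimes S'}=\overline{S}\otimes\overline{S'}$, and --- since the restriction of $\tau\otimes\tau'$ to $K\times K'$ is $(\tau|_K)\otimes(\tau'|_{K'})$ --- that
\[R_{K\times K'}^{H\times H'}(S\otimes S')=R_K^H(S)\otimes R_{K'}^{H'}(S').\]
Each of these is immediate from the definitions.

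Granting the lemma, verifying the five sub-inductor axioms for $J\otimes J'$ is routine. By construction $(J\otimes J')_{K\times K'}^{H\times H'}$ is the join-preserving extension of $\tau\otimes\tau'\mapsto J_K^H(\tau)\otimes J_{K'}^{H'}(\tau')$, and since every character of $K\times K'$ is of this form, axiom~\ref{axiom-join} is automatic and equivariance follows from $\overline{S\otimes S'}=\overline{S}\otimes\overline{S'}$ together with equivariance of $J$ and $J'$. For each remaining axiom I would check the identity (or inclusion) on a single character $\tau\otimes\tau'$ and extend by join-preservation, using that both sides of the axiom are join-preserving, resp.\ monotone, in the relevant argument. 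On a single character: axiom~\ref{axiom-trans} reduces, using distributivity of $\otimes$ over unions, to $J_L^HJ_K^L(\tau)\otimes J_{L'}^{H'}J_{K'}^{L'}(\tau')$, which equals $J_K^H(\tau)\otimes J_{K'}^{H'}(\tau')$ by transitivity for $J$ and $J'$; axiom~\ref{axiom-cover} follows by applying the restriction factorization formula and then axiom~\ref{axiom-cover} for $J$ and $J'$; axiom~\ref{axiom-res} follows by applying the restriction factorization formula, then axiom~\ref{axiom-res} for $J$ and $J'$ together with monotonicity, and then recognizing the resulting product as $(J\otimes J')$ applied to $R_{(K\wedge L)\times(K'\wedge L')}^{L\times L'}(\tau\otimes\tau')$ via the restriction factorization formula and the identity on meets; and axiom~\ref{axiom-unit} is immediate from $1_H\in J_K^H(1_K)$, $1_{H'}\in J_{K'}^{H'}(1_{K'})$, and $1_{H\times H'}=1_H\otimes 1_{H'}$.

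For the residue, unfolding the definition and using $\w{L\times L'}=\w L\otimes\w{L'}$ with distributivity of $\otimes$ over unions gives
\[\Res_{J\otimes J'}(H\times H')=\bigcup_{L\times L'<H\times H'}J_L^H(\w L)\otimes J_{L'}^{H'}(\w{L'}).\]
I would then split the index set $\{(L,L'):L\leqslant H,\ L'\leqslant H',\ (L,L')\neq(H,H')\}$ as the union of $\{L<H,\ L'\leqslant H'\}$ and $\{L\leqslant H,\ L'<H'\}$. On the first piece, for each fixed $L<H$ the inner union $\bigcup_{L'\leqslant H'}J_{L'}^{H'}(\w{L'})$ equals $\w{H'}$, since the $L'=H'$ term is $J_{H'}^{H'}(\w{H'})=\w{H'}$ (by axiom~\ref{axiom-cover} with $K=H=H'$, as $R_{H'}^{H'}=\id$) and every term is a subset of $\w{H'}$; so distributivity makes this piece contribute $\big(\bigcup_{L<H}J_L^H(\w L)\big)\otimes\w{H'}=\Res_J(H)\otimes\w{H'}$, and symmetrically the second piece contributes $\w H\otimes\Res_{J'}(H')$. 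Their union is the asserted formula, and this derivation also covers the degenerate cases in which $H$ or $H'$ is trivial. Overall I do not expect a genuine obstacle here; the only points that need care are getting the restriction factorization formula exactly right and carefully bookkeeping the residue index set together with the distributivity of $\otimes$ over unions.
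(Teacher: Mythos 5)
Your proposal is correct and follows essentially the same route as the paper: the axioms are reduced to those for $J$ and $J'$ via the factorization of subgroups, characters, and restriction along the coprime decomposition (in particular $R$ commuting with tensor products), and the residue formula is obtained by the same splitting of the index set $\{(K,K')\neq(H,H')\}$ using $J_{H}^{H}(\w{H})=\w{H}$. The only difference is that you spell out in detail the steps the paper declares immediate.
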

\begin{proof}
	Axioms~\ref{axiom-trans} and~\ref{axiom-unit} are immediate from the construction, and axioms~\ref{axiom-cover} and~\ref{axiom-res} follow from the fact that $R$ commutes with tensor products. Equivariance also follows easily since $\Gal(\C/\R)$ acts diagonally on a tensor product of representations, and $J\otimes J'$ preserves joins by construction.
	
	For the final claim we can simply compute
	\begin{align*}
		\Res_{J\otimes J'}(H\times H') &= \bigcup_{K\times K'<H\times H'} (J\otimes J')_{K\times K'}^{H\times H'}(\w{K\times K'})\\
		&= \bigcup_{K\times K'<H\times H'} J_K^H(\w{K})\otimes (J')_{K'}^{H'}(\w{K'})\\
		&= \bigcup_{K<H} J_K^H(\w{K})\otimes \w{H'} \cup \bigcup_{K'< H'} \w{H}\otimes (J')_{K'}^{H'}(\w{K'})\\
		&=\Res_J(H)\otimes \w{H'}\cup \w{H}\otimes\Res_{J'}(H').
	\end{align*}
\end{proof}

\subsection{Diagrams}

\begin{dfn}
	Let $G$ be a finite group. A \emph{diagram} $D = (D(H))_{H\leqslant G}$ on $G$ is a collection of sets $D(H)\in V_H$ indexed by all subgroups. The diagram $D$ is \emph{universal} if $D(H)\in \U_H$ for all $H$.
	
	A diagram $D$ is \emph{$R$-stable} if $R_K^H(D(H))\subseteq D(K)$ for all $K\leqslant H$. Let $J$ be a sub-inductor and $\RR$ a transfer system on $G$. A diagram $D$ is \emph{$(J,\RR)$-stable} if $J_K^H(D(K))\subseteq D(H)$ for all $K\to H\in \RR$. If $\RR$ is the maximal transfer system where $K\to H\in\RR$ iff $K\leqslant H$, then we will simply say $D$ is $J$-stable.
	
	Given any diagram $D$, there is a unique minimal $R$-stable diagram $\lrangle{D}_R$ containing $D$, which we call the \emph{$R$-stabilization} of $D$. We can construct the $R$-stabilization explicitly via the formula
	\[\lrangle{D}_R(H) = \bigcup_{K\geqslant H} R_H^K(D(K)).\]
	
	Similarly, given a diagram $D$ and any sub-inductor $J$ and transfer system $\RR$ there is a unique minimal $R$-stable diagram $\lrangle{D}_J^\RR$ containing $D$, which we call the \emph{$(J,\RR)$-stabilization} of $D$. We can construct the $(J,\RR)$-stabilization explicitly via the formula
	\[\lrangle{D}_J^\RR(H) = \bigcup_{K\to H\in\RR} J_K^H(D_i(K)).\]
	If $\RR$ is the maximal transfer system we will simply write $\lrangle{D}_J$ instead of $\lrangle{D}_J^\RR$.
\end{dfn}

The action of $\Gal(\C/\R)$ on $V_H$ for all $H\leqslant G$ induces an action of $\Gal(\C/\R)$ on the set of diagrams. Note that since $R$ and $J$ are equivariant, for $\tau\in\Gal(\C/\R)$ we have
\[\lrangle{\tau D}_R = \tau \lrangle{D}_R\]
and similarly
\[\lrangle{\tau D}_J^\RR = \tau \lrangle{D}_J^\RR.\]
In particular, if $D$ is universal then any stabilization of $D$ is also universal.

\begin{lem}\label{R-stable}
	Let $D$ be an $R$-stable diagram and let $J$ be a sub-inductor and $\RR$ a transfer system. Then $\lrangle{D}_J^\RR$ is again $R$-stable.
\end{lem}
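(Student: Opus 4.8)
The plan is to unwind the explicit formula
\[
\lrangle{D}_J^\RR(H) = \bigcup_{K\to H\in\RR} J_K^H(D(K))
\]
and push a restriction map through it, using the compatibility axiom between $R$ and $J$ (axiom~\ref{axiom-res} of a sub-inductor), the $R$-stability of $D$, and the second axiom in the definition of a transfer system. Fix $L\leqslant H$; we must show $R_L^H(\lrangle{D}_J^\RR(H))\subseteq\lrangle{D}_J^\RR(L)$. Since $R_L^H$ is a join-semilattice homomorphism it commutes with the union in the defining formula, so it suffices to bound $R_L^H J_K^H(D(K))$ for each $K$ with $K\to H\in\RR$.

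For such a $K$, axiom~\ref{axiom-res} (with the roles of its two subgroups taken to be $L$ and $K$, and input $D(K)$) gives
\[
R_L^H J_K^H(D(K)) \subseteq J_{L\wedge K}^L R_{L\wedge K}^K(D(K)).
\]
Because $D$ is $R$-stable we have $R_{L\wedge K}^K(D(K))\subseteq D(L\wedge K)$, and since $J_{L\wedge K}^L$ preserves joins it is order-preserving, so
\[
R_L^H J_K^H(D(K)) \subseteq J_{L\wedge K}^L(D(L\wedge K)).
\]
It then remains only to observe that $L\wedge K\to L$ lies in $\RR$: this is exactly property~2 in the definition of a transfer system applied to the relation $K\to H\in\RR$ and the subgroup $L\leqslant H$. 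Hence $J_{L\wedge K}^L(D(L\wedge K))$ is one of the terms in the union defining $\lrangle{D}_J^\RR(L)$, and is therefore contained in it. Taking the union over all $K\to H\in\RR$ yields $R_L^H(\lrangle{D}_J^\RR(H))\subseteq\lrangle{D}_J^\RR(L)$, as desired.

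I do not expect a genuine obstacle here; the argument is a short chain of inclusions. The only points requiring care are correctly matching indices in axiom~\ref{axiom-res} (which is phrased for a pair of subgroups of a common overgroup, not a chain) and recording that each $J_K^H$ is monotone because it preserves joins.
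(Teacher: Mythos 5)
Your argument is correct and is essentially identical to the paper's proof: the same chain of inclusions, pushing $R_L^H$ through the defining union, applying sub-inductor axiom~\ref{axiom-res}, then $R$-stability of $D$ together with monotonicity of $J$, and finally the pullback property of transfer systems to see that $K\wedge L\to L\in\RR$. No gaps; the index-matching in axiom~\ref{axiom-res} is handled exactly as in the paper.
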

\begin{proof}
	Let $L\leqslant H$. To show the claim we can simply compute
	\begin{align*}
		R_L^H(\lrangle{D}_J^\RR(H))&=\bigcup_{K\to H\in\RR} R_L^HJ_K^H(D(K))\\
		&\subseteq\bigcup_{K\to H\in\RR} J_{K\wedge L}^L R_{K\wedge L}^K(D(K))\\
		&\subseteq\bigcup_{K\to H\in\RR} J_{K\wedge L}^L (D(K\wedge L))\\
		&\subseteq\bigcup_{M\to L\in\RR} J_M^L (D(M))\\
		&=\lrangle{D}_J^\RR(L),
	\end{align*}
\end{proof}

We now introduce two classes of subgroups $H\leqslant G$ for which the transition from $D(H)$ to $\lrangle{D}_J^\RR(H)$ is particularly well-controlled.

\begin{dfn}
	Let $\RR$ be a transfer system on $G$. Then a subgroup $H\leqslant G$ is \emph{$\RR$-cofibrant} if there exists no $K<H$ such that $K\to H\in\RR$. A subgroup $H\leqslant G$ is \emph{$\RR$-fibrant} if $K\to G\in\RR$.
\end{dfn}

\begin{rem}
	This nomenclature is inspired by connection between transfer systems and weak factorization systems identified by Franchere et al.~\cite{FOOQW}. Indeed, if $\RR$ is the set of fibrations of a model structure on the lattice $\Sub(G)$ then clearly a subgroup is $\RR$-fibrant if and only if it's a fibrant object in the given model structure. Similarly if we think of $\RR$ as the set of acyclic fibrations in some model structure on $\Sub(G)$, then, although not quite as immediately obvious, one can show a subgroup is $\RR$-cofibrant if and only if it is a cofibrant object in the given model structure.
	
	The reader need not be familiar with the theory of model categories to follow any of our arguments; we only use this connection to give these important classes of subgroups memorable names.
\end{rem}

In our case, cofibrant subgroups are especially important since a saturated transfer system is characterized by its cofibrant subgroups.

\begin{lem}\label{cofib determines sat trans system}
	Let $\RR,\RR'$ be transfer systems, and suppose $\RR'$ is saturated. Then $\RR\leqslant \RR'$ if and only if every $\RR'$-cofibrant subgroup is $\RR$-cofibrant.
\end{lem}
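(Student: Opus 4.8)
The plan is to prove both directions directly from the definitions, using the explicit formula for $R$-stabilization and the characterization of saturation.

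For the forward direction, suppose $\RR\leqslant\RR'$, and let $H$ be $\RR'$-cofibrant. If $H$ were not $\RR$-cofibrant, there would exist $K<H$ with $K\to H\in\RR$; but $\RR\leqslant\RR'$ then forces $K\to H\in\RR'$, contradicting $\RR'$-cofibrancy of $H$. So $H$ is $\RR$-cofibrant, and this direction is essentially immediate.

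The reverse direction is the substantive one. Assume every $\RR'$-cofibrant subgroup is $\RR$-cofibrant; we must show $\RR\leqslant\RR'$, i.e.\ that $K\to H\in\RR$ implies $K\to H\in\RR'$. The idea is to induct on the index $[H:K]$ (equivalently on the length of a maximal chain from $K$ to $H$ in $\Sub(G)$). Fix $K\to H\in\RR$ with $K<H$ (the case $K=H$ being trivial). I distinguish two cases. If $H$ is $\RR'$-cofibrant, then by hypothesis $H$ is $\RR$-cofibrant, which contradicts $K<H$ with $K\to H\in\RR$; so this case cannot occur. Hence $H$ is \emph{not} $\RR'$-cofibrant, meaning there exists $L<H$ with $L\to H\in\RR'$. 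Now I want to use $L$ as an intermediate subgroup together with saturation of $\RR'$. The natural move is to consider $K\wedge L$: since $L\to H\in\RR'$ and $K\leqslant H$, axiom (2) of a transfer system gives $K\wedge L\to K\in\RR'$. On the other hand, from $K\to H\in\RR$ and $L\leqslant H$ we get $K\wedge L\to L\in\RR$ by the same axiom. Since $[L:K\wedge L]\leqslant[H:K]$, and in fact the strict inequality $L<H$ should let us conclude $[L:K\wedge L]<[H:K]$ — this is the point that needs a little care — the inductive hypothesis applies to $K\wedge L\to L\in\RR$, giving $K\wedge L\to L\in\RR'$.

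At this point I have $K\wedge L\to L\to H$ all in $\RR'$, hence by transitivity $K\wedge L\to H\in\RR'$, and also $K\wedge L\to K\in\RR'$ with $K\wedge L\leqslant K\leqslant H$. Now \emph{saturation} of $\RR'$ applied to $K\wedge L\to H\in\RR'$ with the intermediate subgroup $K\wedge L\leqslant K\leqslant H$ yields exactly $K\to H\in\RR'$, completing the induction. The main obstacle — and the step I expect requires the most attention — is verifying that the index drops strictly, i.e.\ that $L<H$ genuinely forces $[L:K\wedge L]<[H:K]$ so the induction is well-founded; one has to rule out the degenerate possibility $K\wedge L=K$ (equivalently $K\leqslant L$), but in that case $K\leqslant L<H$ combined with $L\to H\in\RR'$ and saturation already gives $K\to H\in\RR'$ directly, so that subcase is handled separately and harmlessly. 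Organizing the induction to cover both subcases cleanly is the only real bookkeeping involved; everything else is a direct application of the transfer-system axioms and the definition of saturation.
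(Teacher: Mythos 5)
Your forward direction is fine, and your overall strategy for the converse (find $L<H$ with $L\to H\in\RR'$, pull back, then combine transitivity with saturation) is workable, but the induction as you set it up has a genuine gap. You induct on $[H:K]$ and need $[L:K\wedge L]<[H:K]$, and you identify the only obstruction as the degenerate case $K\wedge L=K$. That is backwards: when $K\leqslant L<H$ the index does drop strictly, since $[L:K]<[H:K]$; the real failure case is $K\vee L=H$ with $K\not\leqslant L$ --- for instance $H=C_p\times C_p$ with $K,L$ two distinct subgroups of order $p$, where $K\wedge L=\1$ and $[L:K\wedge L]=p=[H:K]$. There the inductive hypothesis does not apply and your step fails. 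Moreover, your separate treatment of the subcase $K\leqslant L$ misapplies saturation: saturation lets you \emph{enlarge} the source of a transfer (from $M\to H\in\RR'$ and $M\leqslant N\leqslant H$ conclude $N\to H\in\RR'$), not shrink it, so $K\leqslant L$ together with $L\to H\in\RR'$ does not give $K\to H\in\RR'$ (on $C_{p^2}$, the saturated transfer system consisting of the identities together with $C_p\to C_{p^2}$ shows this inference is false).

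Both problems are repairable. The quickest fix of your argument is to induct on $|H|$ instead of on $[H:K]$: the pair $K\wedge L\to L\in\RR$ has $|L|<|H|$, so the hypothesis applies, and then transitivity plus saturation at $K\wedge L\leqslant K\leqslant H$ finishes exactly as you wrote; the subcase $K\leqslant L$ then needs no special handling, since the same induction gives $K\to L\in\RR'$ and transitivity concludes. The paper's proof avoids induction altogether: take $L\leqslant H$ minimal with $L\to H\in\RR'$ (such $L$ exists because $H\to H\in\RR'$); minimality and transitivity force $L$ to be $\RR'$-cofibrant, hence $\RR$-cofibrant by hypothesis; pulling back $K\to H\in\RR$ along $L\leqslant H$ gives $K\wedge L\to L\in\RR$, so cofibrancy forces $L\leqslant K$; and saturation applied to $L\to H\in\RR'$ with $L\leqslant K\leqslant H$ yields $K\to H\in\RR'$.
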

\begin{proof}
	If $\RR\leqslant \RR'$ then directly from the definition it's easy to see that every $\RR'$-cofibrant subgroup is $\RR$-cofibrant, regardless of whether or not $\RR'$ is saturated.
	
	Conversely, suppose $\RR'$ is saturated and every $\RR'$-cofibrant subgroup is $\RR$-cofibrant. Let $K\to H\in \RR$; we need to show $K\to H\in\RR'$. Let $L\leqslant H$ be a minimal subgroup such that $L\to H\in\RR'$. If $L'<L$ and $L'\to L\in\RR'$ then by transitivity we would have $L'\to H\in\RR'$ violating minimality of $L$; thus $L$ must be $\RR'$-cofibrant, and hence by assumption $L$ is $\RR$-cofibrant. But by pullback-closure, since $K\to H\in\RR$ and $L\leqslant H$ we must have $L\wedge K\to L\in\RR$ and hence $L\leqslant K$ since $L$ is $\RR$-cofibrant. But then since $\RR'$ is saturated and $L\to H\in \RR'$ by construction, this implies $K\to H\in\RR'$.
\end{proof}

\begin{lem}\label{controlled stabilization}
	Let $J$ be a sub-inductor, $\RR$ a transfer system, and $D$ a diagram. Then
	\begin{enumerate}
		\item For any $H\leqslant G$,
		\[D(H)\subseteq \lrangle{D}_J^\RR(H)\subseteq D(H)\cup \Res_J(H).\]
		\item For any $\RR$-cofibrant $H\leqslant G$,
		\[\lrangle{D}_J^\RR(H) = D(H).\]
		\item If $D$ is $(J,\RR)$-stable, then for any $\RR$-fibrant $H\leqslant G$,
		\[D(H) \subseteq R_H^G(D(G)).\]
	\end{enumerate}
\end{lem}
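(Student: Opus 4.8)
The plan is to obtain all three parts directly from the explicit formula
$\lrangle{D}_J^\RR(H) = \bigcup_{K\to H\in\RR} J_K^H(D(K))$, combined with two elementary facts about sub-inductors: that a transfer system is a partial order and hence reflexive, so $H\to H\in\RR$ for every $H\leqslant G$; and that $J_H^H = \id_{V_H}$, which follows from axiom~\ref{axiom-cover} with $K=H$ once one notes that $R_H^H$ is the identity on $V_H$. With these in hand each statement becomes a one- or two-line verification.

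For part (1), the lower bound $D(H)\subseteq\lrangle{D}_J^\RR(H)$ is immediate, since the reflexive pair $H\to H\in\RR$ contributes the term $J_H^H(D(H)) = D(H)$ to the union. For the upper bound I split the union according to whether $K=H$ or $K<H$; these are the only possibilities, since $K\to H\in\RR$ forces $K\leqslant H$. The $K=H$ summand is again $D(H)$. For each $K<H$, monotonicity of the join-semilattice homomorphism $J_K^H$ (axiom~\ref{axiom-join}) together with $D(K)\subseteq\w{K}$ gives $J_K^H(D(K))\subseteq J_K^H(\w{K})$, and the right-hand side is one of the terms in the definition $\Res_J(H) = \bigcup_{K<H}J_K^H(\w{K})$. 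Taking the union over all admissible $K$ therefore yields $\lrangle{D}_J^\RR(H)\subseteq D(H)\cup\Res_J(H)$.

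Part (2) is the degenerate case of the same bookkeeping: if $H$ is $\RR$-cofibrant then by definition there is no $K<H$ with $K\to H\in\RR$, so the only index contributing to the union is $K=H$, and the formula collapses to $\lrangle{D}_J^\RR(H) = J_H^H(D(H)) = D(H)$. For part (3), assume $D$ is $(J,\RR)$-stable and $H$ is $\RR$-fibrant, i.e.\ $H\to G\in\RR$. Stability provides the inclusion $J_H^G(D(H))\subseteq D(G)$ in $V_G$; applying the monotone map $R_H^G$ to both sides and invoking axiom~\ref{axiom-cover}, which gives $R_H^G J_H^G(D(H)) = D(H)$, produces $D(H)\subseteq R_H^G(D(G))$.

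There is no genuine obstacle in this lemma; its content is entirely front-loaded into the definitions of sub-inductor, residue, and the $(J,\RR)$-stabilization. The only points that require a moment of attention are recognizing that $J_H^H$ is the identity and that transfer systems are reflexive, so that the ``diagonal'' summand $K=H$ is always present in $\lrangle{D}_J^\RR(H)$, which is what makes both the lower bound in part~(1) and the equalities in parts~(2) work.
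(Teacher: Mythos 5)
Your proof is correct and follows the same route as the paper, which simply states that parts (1) and (2) are immediate from the explicit formula for $\lrangle{D}_J^\RR$ and that part (3) follows from axiom~\ref{axiom-cover}; you have merely filled in the routine details (reflexivity of $\RR$, $J_H^H=\id$, and the split of the union into $K=H$ and $K<H$). No issues.
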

\begin{proof}
	The first two claims follow immediately from the explicit construction of $\lrangle{D}_J^\RR$. The last claim follows from sub-inductor axiom~\ref{axiom-cover}.
\end{proof}

\begin{cor}\label{JR-stable}
	Let $J$ be an arbitrary sub-inductor. Then for any $R$-stable and $J$-stable diagram $D$ we have
	\[D(H) = R_H^G(D(G)).\]
\end{cor}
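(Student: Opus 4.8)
The plan is to get the two inclusions separately, each essentially for free from results already in hand.

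First I would dispatch the inclusion $R_H^G(D(G)) \subseteq D(H)$. This is just the definition of $R$-stability applied to the interval $H \leqslant G$: we are given $R_K^{H'}(D(H')) \subseteq D(K)$ for all $K \leqslant H'$, and specializing to $K = H$, $H' = G$ yields exactly $R_H^G(D(G)) \subseteq D(H)$. No sub-inductor axioms are needed here.

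For the reverse inclusion I would invoke \cref{controlled stabilization}(3). The point is to observe that ``$J$-stable'' was defined to mean ``$(J,\RR)$-stable'' where $\RR$ is the maximal transfer system, i.e.\ the one in which $K \to G \in \RR$ for every $K \leqslant G$. Consequently \emph{every} subgroup $H \leqslant G$ is $\RR$-fibrant for this $\RR$. Since $D$ is $(J,\RR)$-stable by hypothesis, \cref{controlled stabilization}(3) applies to each such $H$ and gives $D(H) \subseteq R_H^G(D(G))$.

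Combining the two inclusions gives $D(H) = R_H^G(D(G))$ for all $H \leqslant G$, as desired. There is no real obstacle here: the only thing to be careful about is the bookkeeping around the definition of $J$-stability versus $(J,\RR)$-stability, and the trivial but crucial remark that with respect to the maximal transfer system the fibrant subgroups are all of $\Sub(G)$ — which is precisely what lets \cref{controlled stabilization}(3) run for every $H$ rather than just for the $\RR$-fibrant ones of a smaller $\RR$.
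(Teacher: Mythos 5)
Your argument is correct and is exactly the paper's proof: $R$-stability gives $R_H^G(D(G))\subseteq D(H)$, and since every subgroup is fibrant for the maximal transfer system, \cref{controlled stabilization}(3) applied to the $J$-stable (i.e.\ $(J,\RR_{\max})$-stable) diagram gives the reverse inclusion.
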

\begin{proof}
	Since every subgroup if fibrant with respect to the maximal transfer system, by part 3 of \cref{controlled stabilization}, a $J$-stable diagram $D$ satisfies
	\[D(H)\subseteq R_H^G(D(G)).\]
	The reverse inclusion holds by definition of $R$-stability.
\end{proof}

\subsection{Realizing saturated transfer systems}

We can now present our main theorem on constructing $G$-universes with a given saturated transfer system.

\begin{thm}\label{mainthm}
	Let $G$ be a finite Abelian group, and let $\RR$ be a saturated transfer system on $G$. Suppose there exists a sub-inductor $J$ and an $R$-stable $\Gal(\C/\R)$-invariant diagram $D$ such that for all $\RR$-cofibrant subgroups $H$ and all $K<H$,
	\[I_K^H(D(K))\not\subseteq D(H)\cup \Res_J(H).\]
	
	Then there exists a $G$-universe $U$ such that $\Tr(U) = \RR$.
\end{thm}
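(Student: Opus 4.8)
The plan is an iterative ``lift-and-restabilize'' procedure: starting from $D$, we repeatedly lift the current diagram into $\U_G$ along the sub-inductor $J$, restabilize under ordinary induction $I$, and argue that the stable point of this process realizes $\RR$. We may first assume $D$ is universal: enlarging each $D(H)$ by the trivial representation $[\C]_H$ preserves $R$-stability and $\Gal(\C/\R)$-invariance of $D$, and preserves the hypothesis, since $[\C]_H\in\Res_J(H)$ for $H\neq\1$ and the hypothesis is vacuous at $H=\1$. Now put $E_0 := D$ and, inductively,
\[U_{i+1} := \bigcup_{K\leqslant G} J_K^G(E_i(K)),\qquad E_{i+1} := \lrangle{D_{U_{i+1}}}_I^\RR \qquad(i\geqslant 0).\]
Each $U_{i+1}$ is $\Gal(\C/\R)$-closed by equivariance of $J$ (axiom~\ref{axiom-join}) and contains $[\C]_G$ by axiom~\ref{axiom-unit}, so $U_{i+1}\in\U_G$. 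Axiom~\ref{axiom-cover} gives $D_{U_{i+1}}\supseteq E_i$, and since $\lrangle{-}_I^\RR$ is extensive and monotone this yields $E_0\subseteq E_1\subseteq\cdots$, hence $U_1\subseteq U_2\subseteq\cdots$; as $\U_G$ is finite the sequence stabilizes at some $U_n$. At the stable point $D_{U_n}=D_{U_{n+1}}\supseteq E_n=\lrangle{D_{U_n}}_I^\RR\supseteq D_{U_n}$, so $D_{U_n}$ is $(I,\RR)$-stable, which is exactly the statement $\RR\subseteq\Tr(U_n)$.

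The technical heart is the following invariant: for every $\RR$-cofibrant $H$ and every $i\geqslant 1$,
\[D(H)\subseteq D_{U_i}(H)\subseteq D(H)\cup\Res_J(H).\]
The left inclusion is clear, since $U_i\supseteq J_H^G(D(H))$ and $R_H^GJ_H^G(D(H))=D(H)$ by axiom~\ref{axiom-cover}. For the right inclusion, expand $D_{U_{i+1}}(H)=\bigcup_{K\leqslant G}R_H^GJ_K^G(E_i(K))$: for $K\leqslant H$, axioms~\ref{axiom-trans} and~\ref{axiom-cover} give $R_H^GJ_K^G(E_i(K))=J_K^H(E_i(K))$, while for $K\not\leqslant H$ the Mackey-type axiom~\ref{axiom-res} together with $R$-stability of $E_i$ (which holds by hypothesis when $i=0$ and by \cref{R-stable} otherwise) gives $R_H^GJ_K^G(E_i(K))\subseteq J_{K\wedge H}^H(E_i(K\wedge H))$ with $K\wedge H\leqslant H$. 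Hence $D_{U_{i+1}}(H)\subseteq\bigcup_{K\leqslant H}J_K^H(E_i(K))=E_i(H)\cup\bigcup_{K<H}J_K^H(E_i(K))\subseteq E_i(H)\cup\Res_J(H)$. Since $H$ is $\RR$-cofibrant, \cref{controlled stabilization}(2) gives $E_i(H)=\lrangle{D_{U_i}}_I^\RR(H)=D_{U_i}(H)$ for $i\geqslant 1$ (and $E_0(H)=D(H)$), so induction on $i$ closes the loop. I expect the control of the off-diagonal restriction terms via axiom~\ref{axiom-res}, together with the bookkeeping needed to keep the iterates inside $\U_G$, to be the only genuinely delicate points; the rest is formal.

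Finally, the upper bound. By \cref{cofib determines sat trans system} and saturation of $\RR$, it suffices to show every $\RR$-cofibrant $H$ is $\Tr(U_n)$-cofibrant, i.e.\ $I_K^H(D_{U_n}(K))\not\subseteq D_{U_n}(H)$ for all $K<H$. The invariant gives $I_K^H(D_{U_n}(K))\supseteq I_K^H(D(K))$ (using monotonicity of $I_K^H$) and $D_{U_n}(H)\subseteq D(H)\cup\Res_J(H)$, so the hypothesis $I_K^H(D(K))\not\subseteq D(H)\cup\Res_J(H)$ supplies an element of $I_K^H(D_{U_n}(K))$ lying outside $D_{U_n}(H)$, as required. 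Together with $\RR\subseteq\Tr(U_n)$ we conclude $\Tr(U_n)=\RR$; since $G$ is Abelian, $U_n\in\U_G$ corresponds to a $G$-universe $U$ with $\Tr(U)=\RR$.
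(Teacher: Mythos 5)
Your proposal is correct and takes essentially the same approach as the paper: a monotone two-step iteration (adjoin $\RR$-transfers via $I$, then fill in along $J$) that stabilizes in a finite lattice, with the residue $\Res_J$ controlling the diagram at $\RR$-cofibrant subgroups and \cref{cofib determines sat trans system} closing the argument. The only difference is bookkeeping: you lift to an element of $\U_G$ via $J_K^G$ at every round and re-derive the restriction control from axioms~\ref{axiom-trans},~\ref{axiom-cover}, and~\ref{axiom-res} inline, whereas the paper alternates the diagram-level stabilizations $\lrangle{\cdot}_I^\RR$ and $\lrangle{\cdot}_J$ and only passes to a universe at the end via \cref{JR-stable}.
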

\begin{proof}
	By \cref{lem-transitive} we can assume $J$ is transitive. Note that since by axiom~\ref{axiom-unit} we have $1_H = J_{\1}^H(1)\in\Res_J(H)$ for all $H\neq\1$, we can add $1_H$ to $D(H)$ without affecting our hypotheses. Thus we can also assume without loss of generality that $D$ is universal.
	
	Let $D_0 = D$ and define $D_i$ inductively as follows. If $i$ is odd, let
	\[D_i = \lrangle{D_{i-1}}_I^\RR.\]
	If $i$ is even, let
	\[D_i = \lrangle{D_{i-1}}_J.\]
	By equivariance, we have $D_i(H)\in\U_H$ for all $H\leqslant G$. By \cref{R-stable}, $D_i$ is $R$-stable for all $i$. Clearly $D_i\supseteq D_{i-1}$ for all $i$, so since $\coprod_{H\leqslant G} \U_H$ is finite there must be some $n$ such that $D_i = D_n$ for all $i\geqslant n$. Let $U$ be the $G$-universe corresponding to $D_n(G)\in \U_G$. I claim $\Tr(U) = \RR$.
	
	First note that $D_n$ must be $J$-stable and $(I,\RR)$-stable since $D_n = D_{n+1} = D_{n+2}$. By \cref{JR-stable}, since $D_n$ is $J$-stable and $R$-stable we have $D_n(H) = R_H^G(D_n(G))$. Thus
	\[\Tr(U) = \Tr(D_n(G)).\]
	
	Since $D_n$ is $(I,\RR)$-stable we have by definition $\RR\subseteq \Tr(U)$. To show the converse, it suffices by \cref{cofib determines sat trans system} to show that every $\RR$-cofibrant subgroup is also $\Tr(U)$-cofibrant.
	
	So let $H$ be $\RR$-cofibrant, and suppose for contradiction there exists some $K<H$ such that $K\to H\in\Tr(U)$. By part 2 of \cref{controlled stabilization}, for $i$ odd we know $D_i(H) = D_{i-1}(H)$, and by part 1 of \cref{controlled stabilization}, for $i$ even we know $D_i(H) = D_{i-1}(H)\cup\Res_J(H)$. Thus by an easy induction we can see
	\[D_n(H)\subseteq D(H)\cup\Res_J(H).\]
	On the other hand, we know $D_n(K)\supseteq D(K)$, so if $K\to H\in\Tr(U)=\Tr(D_n)$ then
	\[I_K^H(D(K))\subseteq I_K^H(D_n(K))\subseteq  D_n(H)\subseteq D(H)\cup\Res_J(H)\]
	contradicting our hypothesis.
\end{proof}

\section{Constructing sub-inductors}\label{sec:5}

Armed with \cref{mainthm}, we now know that to prove the saturation conjecture for $G$ it suffices to construct a sub-inductor $J$ and an $R$-stable invariant diagram $D$ such that for all subgroups $H\in\Sub(G)$ and all $K<H$,
\[I_K^H(D(K))\not\subseteq D(H)\cup \Res_J(H).\]
In this section we develop techniques to construct such a pair $(D,J)$. It turns out to be convenient to add a small extra constraint: for all $H\leqslant G$, we require
\[D(H)\not\subseteq \Res_J(H).\]
We call a pair $(D,J)$ satisfying these constraints a \emph{tight pair}. More formally:

\begin{dfn}\label{tight dfn}
	Let $G$ be a finite Abelian group. A \emph{tight pair} is a pair $(D,J)$ where $D$ is an $R$-stable diagram and $J$ is a sub-inductor, such that
	\begin{enumerate}
		\item For all $H\leqslant G$, $D(H)$ is invariant under the action of $\Gal(\C/\R)$.
		\item For all $K<H\leqslant G$,
		\[I_K^H(D(K))\not\subseteq D(H)\cup \Res_J(H).\]
		\item For all $H\leqslant G$,
		\[D(H)\not\subseteq \Res_J(H).\]
	\end{enumerate}
\end{dfn}

\begin{cor}[Corollary to \cref{mainthm}]\label{cormain}
	If a finite Abelian group $G$ has a tight pair, then every saturated transfer system on $G$ can be realized by a linear isometries operad.
\end{cor}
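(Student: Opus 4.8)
The plan is to feed a tight pair directly into \cref{mainthm}. Fix a finite Abelian group $G$ admitting a tight pair $(D,J)$, and let $\RR$ be an arbitrary saturated transfer system on $G$; it suffices to produce a $G$-universe $U$ with $\Tr(U)=\RR$, since then, by the identification of $\Tr(U)$ with the transfer system of $\LL(U)$ recorded in \cref{sec:2}, the linear isometries operad $\LL(U)$ realizes $\RR$, and $\RR$ is arbitrary among saturated transfer systems.

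First I would verify that $J$, $D$, and $\RR$ satisfy the hypotheses of \cref{mainthm}. By the definition of a tight pair (\cref{tight dfn}), $D$ is an $R$-stable diagram, each $D(H)$ is $\Gal(\C/\R)$-invariant, and $J$ is a sub-inductor, so the only thing left to check is the non-containment $I_K^H(D(K))\not\subseteq D(H)\cup\Res_J(H)$ for every $\RR$-cofibrant $H$ and every $K<H$. But condition 2 of \cref{tight dfn} asserts precisely this for \emph{all} pairs $K<H\leqslant G$, with no cofibrancy hypothesis on $H$, so in particular it holds for the $\RR$-cofibrant subgroups. (Condition 3 of \cref{tight dfn}, namely $D(H)\not\subseteq\Res_J(H)$, plays no role here; it is an auxiliary bookkeeping condition used to keep the iterative constructions of \cref{sec:5} under control, and is simply ignored for the present deduction.) Applying \cref{mainthm} then yields the desired $G$-universe $U$ with $\Tr(U)=\RR$, which completes the argument.

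I do not expect any real obstacle: this is an immediate corollary, the only genuine ``content'' being the observation that the tight-pair axioms are strong enough, indeed stronger than necessary since they are uniform over all $K<H$ rather than merely over $\RR$-cofibrant $H$, to invoke \cref{mainthm} simultaneously for every saturated $\RR$. The substantive work lies elsewhere: in the proof of \cref{mainthm} itself, and, for the concrete applications, in actually exhibiting tight pairs on the relevant groups, which is the business of \cref{sec:5}.
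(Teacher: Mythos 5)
Your proposal is correct and is exactly the argument the paper intends: \cref{cormain} is stated as an immediate consequence of \cref{mainthm}, with conditions (1) and (2) of \cref{tight dfn} supplying the hypotheses uniformly over all $K<H$ (hence for all $\RR$-cofibrant $H$ and every saturated $\RR$), and the paper itself notes that condition (3) is not needed here. No gap and no difference in approach.
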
\qed

Note that we don't need constraint (3) in this corollary. The importance of constraint (3) is explained in the following subsection.

\subsection{Reduction to $p$-groups}

One of the most powerful techniques in algebra is the technique of localization. The primary difficulty in solving the saturation conjecture for cyclic groups is that transfer systems don't interact nicely with localization, in the sense that given a pair of groups $G,H$, even if we understand the transfer systems on $G$ and $H$ very well and $\gcd(|G|,|H|)=1$, understanding the transfer systems on $G\times H$ is extremely difficult. Indeed, transfer systems on cyclic $p$-groups are quite easy to understand, but even saturated transfer systems on $C_{p^n}\times C_{q^m}$ are very complicated.

An immediate benefit to working with tight pairs is that they localize. Recall from the discussion preceding \cref{prop-localization} the notion of tensor product of sub-inductors.

\begin{lem}\label{localization lemma}
	Let $G$ (resp. $G'$) be a finite Abelian group equipped with a tight pair $(D,J)$ (resp. $(D',J')$). Suppose $\gcd(|G|,|G'|)=1$. Then $(D\otimes D',J\otimes J')$ is a tight pair, where
	\[D\otimes D'(H\times H'):= D(H)\otimes D(H').\]
\end{lem}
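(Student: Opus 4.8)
The plan is to verify the three conditions of \cref{tight dfn} for the pair $(D\otimes D', J\otimes J')$ on $G\times G'$, leveraging \cref{prop-localization} for the sub-inductor part and the multiplicative structure of subgroups and irreducibles. First I would record the structural facts that make everything go through: since $\gcd(|G|,|G'|)=1$, every subgroup of $G\times G'$ factors uniquely as $H\times H'$, every irreducible of $G\times G'$ factors uniquely as $\tau\otimes\tau'$, and all the relevant functors are compatible with this: $R_{K\times K'}^{H\times H'}(\sigma\otimes\sigma') = R_K^H(\sigma)\otimes R_{K'}^{H'}(\sigma')$ and likewise $I_{K\times K'}^{H\times H'}(\tau\otimes\tau') = I_K^H(\tau)\otimes I_{K'}^{H'}(\tau')$, extended by join-preservation; concretely, $S\otimes S' := \{\chi\otimes\chi' : \chi\in S,\ \chi'\in S'\}$ as subsets of $\widehat{G\times G'}\cong\widehat G\times\widehat{G'}$. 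I would also note that $\Gal(\C/\R)$ acts diagonally, so $D(H)\otimes D(H')$ is $\Gal(\C/\R)$-invariant because each factor is, giving condition (1); and that $D\otimes D'$ is $R$-stable because $R$ distributes over $\otimes$ and $R$-stability holds in each factor. That $J\otimes J'$ is a sub-inductor is exactly \cref{prop-localization}, and the same proposition gives the residue formula $\Res_{J\otimes J'}(H\times H') = \Res_J(H)\otimes\widehat{H'}\ \cup\ \widehat H\otimes\Res_{J'}(H')$, which I will use throughout.

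For condition (3), I must show $D(H)\otimes D(H')\not\subseteq \Res_J(H)\otimes\widehat{H'}\cup\widehat H\otimes\Res_{J'}(H')$. By tightness of the two input pairs there exist $\chi\in D(H)\setminus\Res_J(H)$ and $\chi'\in D(H')\setminus\Res_{J'}(H')$; then $\chi\otimes\chi'\in D(H)\otimes D(H')$, but $\chi\otimes\chi'$ lies in neither piece of the residue, since membership in $\Res_J(H)\otimes\widehat{H'}$ forces the first coordinate into $\Res_J(H)$ and symmetrically for the other piece. This uses the uniqueness of the factorization $\chi\otimes\chi'$ in an essential way — that is what lets me read off coordinates. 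For condition (2), I take $K\times K' < H\times H'$ and must produce an element of $I_{K\times K'}^{H\times H'}(D(K)\otimes D(K'))$ not lying in $D(H)\otimes D(H')\cup\Res_{J\otimes J'}(H\times H')$. The inequality $K\times K'<H\times H'$ means $K<H$ or $K'<H'$ (or both); by symmetry assume $K<H$. Tightness of $(D,J)$ gives $\chi\in I_K^H(D(K))$ with $\chi\notin D(H)\cup\Res_J(H)$, and condition (3) for $(D',J')$ at $H'$ gives $\chi'\in D(H')\setminus\Res_{J'}(H')$. If $K'<H'$ I would instead use tightness of $(D',J')$ to get $\chi'\in I_{K'}^{H'}(D(K'))\setminus(D(H')\cup\Res_{J'}(H'))$; if $K'=H'$ then $I_{K'}^{H'}$ is the identity and $D(K')=D(H')$, so the element $\chi'$ above still lies in $I_{K'}^{H'}(D(K'))$. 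Either way $\chi\otimes\chi'\in I_{K\times K'}^{H\times H'}(D(K)\otimes D(K'))$, and I claim it avoids the forbidden set: it is not in $D(H)\otimes D(H')$ because its first coordinate $\chi\notin D(H)$; it is not in $\Res_J(H)\otimes\widehat{H'}$ because $\chi\notin\Res_J(H)$; and it is not in $\widehat H\otimes\Res_{J'}(H')$ because $\chi'\notin\Res_{J'}(H')$.

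The one genuinely load-bearing point — and the reason condition (3) was built into the definition of a tight pair in the first place, as the remark after \cref{cormain} hints — is the case analysis in condition (2) when exactly one of $K<H$, $K'<H'$ holds: there I cannot get a ``strict'' witness from the trivial factor, so I must fall back on condition (3) in that factor to keep the second coordinate out of the residue. So the main obstacle, such as it is, is bookkeeping: making sure that in every combination of $K<H$ versus $K=H$ and $K'<H'$ versus $K'=H'$ the chosen $\chi\otimes\chi'$ simultaneously dodges $D(H)\otimes D(H')$ and both halves of the residue. Once that case check is laid out cleanly, the rest is the routine verification that $\otimes$ commutes with $R$ and $I$ and preserves $\Gal(\C/\R)$-invariance, together with a direct appeal to \cref{prop-localization}. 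I would present the coordinate-projection observation (if $\psi\otimes\psi'\in S\otimes\widehat{H'}$ then $\psi\in S$, and if $\psi\otimes\psi'\in\widehat H\otimes S'$ then $\psi'\in S'$) as a one-line sublemma up front, since it is invoked repeatedly and is the crux of why the argument works.
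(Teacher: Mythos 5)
Your proposal is correct and follows essentially the same route as the paper's proof: establish that $J\otimes J'$ is a sub-inductor and compute its residue via \cref{prop-localization}, verify condition (3) by tensoring the two witnesses from condition (3) of the input pairs, and verify condition (2) by the same case analysis (a strict witness from each factor where the inclusion is proper, falling back on condition (3) in a factor where $K=H$). The only difference is that you spell out the coordinate-projection bookkeeping more explicitly than the paper does.
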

\begin{proof}
	Clearly $D\otimes D'$ is $R$-stable and $\Gal(\C/\R)$-invariant. By \cref{prop-localization} we know $J\otimes J'$ is a sub-inductor. For all $H\times H'\leqslant G\times G'$ we have some $\chi\in D(H)\setminus \Res_J(H)$ and $\chi'\in D'(H)\setminus\Res_{J'}(H')$, from which it follows by \cref{prop-localization} that
	\[\chi\otimes\chi'\in D\otimes D'(H\times H')\setminus \Res_{J\otimes J'}(H\times H').\]
	Thus constraint (3) is satisfied.
	
	Now suppose $K\times K'<H\times H'$. If $K<H$, choose
	\[\chi\in I_K^H(D(K))\setminus \left(D(H)\cup \Res_J(H)\right).\]
	Otherwise if $K=H$ choose $\chi\in D(H)\setminus \Res_J(H)$. Define $\chi'$ similarly. Then certainly
	\[\chi\otimes\chi' \in I_{K\times K'}^{H\times H'}(D\otimes D'(K\times K'))\setminus \Res_{J\otimes J'}(H\times H').\]
	We also know either $\chi\notin D(H)$ or $\chi'\notin D(H')$, so
	\[\chi\otimes\chi'\notin D\otimes D'(H\times H').\]
	Thus constraint (2) is satisfied.
\end{proof}

If $(D,J)$ and $(D',J')$ don't satisfy constraint (3) then \cref{localization lemma} can fail; indeed, if $G = C_p$ for arbitrary prime $p$ then one can easily construct a pair $(D,J)$ satisfying all the requirements necessary to be a tight pair except for constraint (3), but it's known that the saturation conjecture fails for groups of the form $C_p\times C_q$ if $p\leqslant 3$. Localization is useful enough to warrant including constraint (3) in the definition of a tight pair even though it's not necessary for \cref{mainthm}.

\subsection{Saturation conjecture for cyclic groups}

\cref{localization lemma} allows us to completely bypass all of the difficulties in the direct approach to the saturation conjecture for cyclic groups. We first need a direct construction for cyclic $p$-groups. This requires a bit more technical work than directly proving the saturation conjecture for cyclic $p$-groups, but the ideas underlying the construction are essentially no more complicated than Rubin's construction.

\begin{lem}\label{cyclic tight pair}
	Let $G = C_{p^n}$ be a cyclic $p$-group with $p\geqslant 5$. Then $G$ has a tight pair $(D,J)$.
\end{lem}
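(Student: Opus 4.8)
The plan is to construct the tight pair $(D,J)$ quite explicitly, exploiting the fact that for $G=C_{p^n}$ the subgroup lattice is a chain $\1 = H_0 < H_1 < \cdots < H_n = G$, where $H_i$ is the unique subgroup of order $p^i$. The character group $\w{G}$ is cyclic of order $p^n$, and restriction $R_{H_i}^G\colon\w{G}\to\w{H_i}$ is the surjection $\Z/p^n\to\Z/p^i$; a character of $G$ restricts nontrivially to $H_i$ exactly when its order is at least $p^{i+1}$. The Galois action $\Gal(\C/\R)$ sends a character to its inverse, so Galois orbits on $\w{H_i}$ have size $\leqslant 2$, with fixed points only the trivial and (when $p=2$) order-$2$ characters; since $p\geqslant 5$ this is a minor point. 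First I would set up the sub-inductor $J$: a natural choice is to let $J_K^H$ be induction $I_K^H$ followed by intersection with a carefully chosen Galois-invariant ``allowed set'' $A_K^H\in\U_H$, i.e. $J_K^H(U) = I_K^H(U)\cap A_H$ for sets $A_{H_i}\in\U_{H_i}$ that are decreasing enough that the residues $\Res_J(H_i)=\bigcup_{j<i}J_{H_j}^{H_i}(\w{H_j})$ omit at least one Galois orbit of $\w{H_i}$ (needed for constraint (3)), while still satisfying the sub-inductor axioms. The residue $\Res_J(H_i)$ will end up being roughly ``the characters of $H_i$ of order $\leqslant p^{i-1}$, together with some bounded amount of extra junk forced by axioms~\ref{axiom-cover} and~\ref{axiom-unit}''; the point of taking $p\geqslant 5$ is to have enough characters of each order $p^j$ (there are $p^{j}-p^{j-1}\geqslant 4$ of them, in $\geqslant 2$ Galois orbits) that we can always reserve a nonempty Galois orbit of top-order characters outside the residue.

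Next I would define the diagram $D$. Since $D$ must be $R$-stable, it is determined by $D(G)\in\U_G$ via $D(H_i)=R_{H_i}^G(D(G))$, so really I only need to choose a single Galois-invariant subset $D(G)\subseteq\w{G}$ containing $[\C]$. The constraints to verify are: (2) for every $i<j$, $I_{H_i}^{H_j}(D(H_i))\not\subseteq D(H_j)\cup\Res_J(H_j)$, and (3) $D(H_j)\not\subseteq\Res_J(H_j)$ for all $j$. Because induction on the character side (for abelian $G$) is the preimage under restriction, $I_{H_i}^{H_j}(D(H_i))$ is the set of characters of $H_j$ whose restriction to $H_i$ lies in $D(H_i)$; so constraint (2) amounts to finding, for each pair $i<j$, a character of $H_j$ whose restriction to $H_i$ is in $D(H_i)=R_{H_i}^G(D(G))$ but which is itself neither in $D(H_j)$ nor in $\Res_J(H_j)$. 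The strategy is to pick $D(G)$ so that $D(H_i)$ always contains the trivial character $1_{H_i}$ (which has trivial restriction everywhere), and to arrange that for each $j$ there is a ``fresh'' top-order character $\chi_j\in\w{H_j}$, lying outside both $\Res_J(H_j)$ and $D(H_j)$, but restricting trivially to every $H_i$ with $i<j$ — impossible literally, since a top-order character restricts nontrivially; instead $\chi_j$ should restrict to something already in $D(H_i)$. Concretely I would take $D(G)$ to be $\{[\C]\}$ together with a single Galois orbit of characters of order $p^{n}$, and possibly one orbit of each intermediate order, chosen disjointly from the junk in $\Res_J$; the $R$-stability then propagates one chosen orbit of each order down the chain, and constraint (2) with $i<j$ is witnessed by pulling back along $R$ a suitable chosen orbit at level $i$ to a non-chosen, non-residue orbit at level $j$. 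The numerics work precisely because $p\geqslant 5$ gives at least $(p^{j}-p^{j-1})/2\geqslant 2$ Galois orbits of order-$p^j$ characters at each level, so ``chosen'' and ``reserved-for-constraint-(2)'' orbits need not collide with each other or with the residue.

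The main obstacle is verifying sub-inductor axiom~\ref{axiom-res}, $R_K^H J_L^H(U)\subseteq J_{K\wedge L}^K R_{K\wedge L}^L(U)$, simultaneously with keeping the residues small. On a chain all subgroups are comparable, so $K\wedge L=\min(K,L)$ and Mackey's formula degenerates, which helps; but one must still check the inclusion for the truncated induction $J_K^H=I_K^H\cap A_H$, and this forces compatibility conditions among the allowed sets $A_{H_i}$ (essentially $R_{H_i}^{H_j}(A_{H_j})\subseteq$ an appropriate truncation at level $i$) that compete with the desire to make $A_{H_i}$ — hence $\Res_J(H_i)$ — omit a top-order Galois orbit. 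I expect to resolve this by a downward induction on the chain, at each stage choosing $A_{H_i}$ to be exactly the Galois-saturation of $R_{H_i}^{H_{i+1}}(A_{H_{i+1}})\cup I_{\1}^{H_i}(\{1\})\cup(\text{one designated orbit per order}<p^i)$, which makes axioms~\ref{axiom-cover},~\ref{axiom-res},~\ref{axiom-unit} automatic while, by the counting above, still leaving a full top-order orbit of $\w{H_i}$ outside $A_{H_i}$ for $p\geqslant 5$. Transitivity (axiom~\ref{axiom-trans}) and join-preservation (axiom~\ref{axiom-join}) are then formal. Once $J$ is pinned down, choosing $D(G)$ to hit the reserved orbits and invoking $R$-stability is routine, and constraints (2) and (3) reduce to the orbit-counting inequalities already described.
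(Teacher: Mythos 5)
There is a genuine gap, and it sits exactly where you flag the ``main obstacle'': the Ansatz $J_K^H(U)=I_K^H(U)\cap A_H$, with an allowed set depending only on the target $H$, cannot satisfy the sub-inductor axioms while keeping the residues small. First note that for such a $J$ one has $\Res_J(H)=A_H$ exactly, since $I_K^H(\w{K})=\w{H}$ for every $K<H$. Next, axiom~\ref{axiom-cover} applied to singletons forces $A_H$ to meet every fiber of restriction, i.e.\ $R_L^H(A_H)=\w{L}$ for every $L<H$; and axiom~\ref{axiom-trans} for a triple $K<L<H$ applied to $U=\w{K}$ gives $I_L^H(A_L)\cap A_H=A_H$, i.e.\ $R_L^H(A_H)\subseteq A_L$. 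Taking $K=\1$ and $H=G$, these two facts force $A_L=\w{L}$, hence $\Res_J(L)=\w{L}$, for every $\1<L<G$, so constraints (2) and (3) of a tight pair become unsatisfiable as soon as $n\geqslant 2$. Your proposed recursion makes the problem visible: it places $R_{H_i}^{H_{i+1}}(A_{H_{i+1}})$ inside $A_{H_i}$, and by axiom~\ref{axiom-cover} that set is already all of $\w{H_i}$, so the claim that a full top-order Galois orbit survives outside $A_{H_i}$ is false. (Two smaller slips: $I_{\1}^{H_i}(\{1\})$ is all of $\w{H_i}$, not $\{1_{H_i}\}$; and $R$-stability does not mean $D(H)=R_H^G(D(G))$ --- it only requires closure under restriction, and the diagram one actually wants is not of that pulled-back form.)

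The repair is to let the allowed set depend on the source subgroup as well, which by the remark after \cref{sub-ind def} is the general shape $J_K^H(U)=I_K^H(U)\cap J_K^H(\w{K})$ of any sub-inductor. The paper's proof does this by choosing, for each covering step $H_{i-1}\lessdot H_i$, a section $s_i$ of restriction with $s_i(1_{H_{i-1}})=1_{H_i}$ and setting $J_{H_{i-1}}^{H_i}(\chi)=\{s_i(\chi),\overline{s_i(\overline{\chi})}\}$, then composing along the chain. On a chain axiom~\ref{axiom-res} comes for free from the other axioms, the residue at $H_i$ is just $J_{H_{i-1}}^{H_i}(\w{H_{i-1}})$, and each induced fiber $I_{H_i}^{H_j}(\chi)$, of size $p^{j-i}$, meets the residue in at most $2^{j-i}$ characters; taking $D(H_i)=\{1_{H_i},\tau_i,\overline{\tau_i}\}$ with $\tau_i\in I_{H_{i-1}}^{H_i}(1_{H_{i-1}})\setminus\Res_J(H_i)$, the inequality $p^k>2+2^k$ for $p\geqslant 5$ then yields constraints (2) and (3). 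Your orbit-counting intuition for $p\geqslant 5$ is in the right spirit, but it must be routed through such a $K$-dependent lifting of individual characters rather than through target-only truncation sets.
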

\begin{proof}
	The subgroups of $G$ are totally ordered under inclusion, and we let $\1 = H_0<H_1<\cdots<H_n=G$ be the collection of all subgroups of $G$ listed in order. For each $i$ the restriction map
	\[\Res_{H_{i-1}}^{H_i}\colon\widehat{H_i}\to\widehat{H_{i-1}}\]
	is well-defined and surjective since $H_i$ is Abelian, so we can find choose some section
	\[s_i\colon\widehat{H_{i-1}}\to\widehat{H_i},\]
	and we can assume $s_i$ is chosen so that $s_i(1_{H_{i-1}}) = 1_{H_i}$. For each $\chi\in\widehat{H_{i-1}}$, let
	\[J_{H_{i-1}}^{H_i}(\chi) = \{s_i(\chi),\overline{s_i(\overline{\chi})}\}.\]
	We then extend $J_{H_{i-1}}^{H_i}$ to a $\cup$-homorphism $V_{H_{i-1}}\to V_{H_{i}}$ in the unique way. Finally, we take $J_{H_i}^{H_i}$ to be the identity for all $i$, and for all $i<j$ we define
	\[J_{H_i}^{H_j}=J_{H_{j-1}}^{H_j}\circ \cdots \circ J_{H_i}^{H_{i+1}}.\]
	
	We claim $J$ constructed in this way is a sub-inductor. Axioms~\ref{axiom-join},~\ref{axiom-trans}, and~\ref{axiom-unit} are essentially baked into the construction. Axiom~\ref{axiom-cover} is immediate when $K = H_{i-1}$ and $H = H_i$ using the fact that restriction commutes with complex conjugation, and this then implies axiom~\ref{axiom-cover} for arbitrary $K\leqslant H$ by transitivity.
	
	It turns out that when the subgroups of $G$ are totally-ordered, axiom~\ref{axiom-res} is actually implied by the other axioms. Indeed, if $K,L\leqslant H$ then either $K\leqslant L$ or $K\geqslant L$. In the first case for any $U\in V_L$ we have
	\[R_K^H J_L^H(U) = R_K^L R_L^H J_L^H(U) = R_K^L(U) = J_{K\wedge L}^K R_{K\wedge L}^L(U).\]
	In the second case we have
	\[R_K^H J_L^H(U) = R_K^H J_K^H J_L^K(U) = J_L^K(U) = J_{K\wedge L}^K R_{K\wedge L}^L(U).\]
	
	Note that for each $i>0$,
	\[\Res_J(H_i) = J_{H_{i-1}}^{H_i}(\widehat{H_{i-1}})\]
	by transitivity, since for any $K<H_i$ we have $K\leqslant H_{i-1}$. Furthermore, if $\chi\neq\tau\in\widehat{H_{i-1}}$ then
	\[I_{H_{i-1}}^{H_i}(\chi)\cap J_{H_{i-1}}^{H_i}(\tau)=\emptyset\]
	by Frobenius reciprocity and axiom~\ref{axiom-cover}. Thus for any $K\lessdot H$ and any $\chi\in \widehat{K}$, we have
	\[|I_K^H(\chi) \cap \Res_J(H)| = |J_K^H(\chi)|\leqslant 2.\]
	In particular,
	\[I_K^H(\chi) \setminus \Res_J(H)\neq\emptyset\]
	since $|I_K^H(\chi)|=p>2$.
	
	Now let $D(H_i)=\{1_{H_i},\tau_i,\overline{\tau_i}\}$ where
	\[\tau_i\in I_{H_{i-1}}^{H_i}(1_{H_{i-1}}) \setminus \Res_J(H_i)\]
	for $i>0$, and $\tau_0\in\widehat{\1}$ is the trivial representation. Then $D$ is $R$-stable and $\Gal(\C/\R)$-invariant, and $D(H)\not\subseteq \Res_J(H)$. If $i<j$ then
	\[I_{H_i}^{H_j}(D(H_i))\supseteq I_{H_i}^{H_j}(\tau_i),\]
	but
	\[I_{H_i}^{H_j}(\tau_i)\cap (D(H_j)\cup \Res_J(H_j))\subseteq\{\tau_j,\overline{\tau_j}\}\cup\left(I_K^H(\tau_i) \cap \Res_J(H_j)\right).\]
	But
	\[|I_{H_i}^{H_j}(\tau_i)| = p^{j-i}\]
	whereas
	\[|\{\tau_j,\overline{\tau_j}\}\cup\left(I_K^H(\tau_i) \cap \Res_J(H_j)\right)| \leqslant 2 + 2^{j-i}.\]
	Since $p\geqslant 5$ we have $p^k > 2 + 2^k$ for all $k\geqslant 1$, from which it follows
	\[I_{H_i}^{H_j}(D(H_i))\not\subseteq D(H_j)\cup \Res_J(H_j).\]
\end{proof}

By \cref{localization lemma} and \cref{cormain} this immediately solves the saturation problem for all cyclic groups.

\begin{thm}\label{sat conj for cyclic groups}
	Let $G$ be a cyclic group of order coprime to $6$. Then every saturated transfer system on $G$ can be realized by some linear isometries operad.
\end{thm}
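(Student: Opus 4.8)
The plan is simply to assemble the machinery already developed. First I would apply the Chinese Remainder Theorem to write $G \cong C_{p_1^{m_1}} \times \cdots \times C_{p_k^{m_k}}$ for distinct primes $p_1,\dots,p_k$ and exponents $m_i \geqslant 1$. Since $\gcd(|G|,6)=1$, we have $2 \nmid |G|$ and $3 \nmid |G|$, so every $p_i \geqslant 5$; moreover the orders $p_i^{m_i}$ are pairwise coprime.

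Next, \cref{cyclic tight pair} gives a tight pair $(D_i, J_i)$ on each cyclic $p$-group $C_{p_i^{m_i}}$, precisely because $p_i \geqslant 5$. I would then induct on $k$: suppose we have constructed a tight pair on $G_{k-1} := C_{p_1^{m_1}} \times \cdots \times C_{p_{k-1}^{m_{k-1}}}$. Since $|G_{k-1}|$ is a product of powers of $p_1,\dots,p_{k-1}$ and hence coprime to $p_k^{m_k}$, \cref{localization lemma} applies and yields a tight pair on $G_{k-1} \times C_{p_k^{m_k}} = G_k$. Iterating to $k$ steps produces a tight pair on $G$ itself (the base case $k=1$ being \cref{cyclic tight pair} directly, and $k=0$, i.e. $G$ trivial, being vacuous).

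Finally, \cref{cormain} says that any finite Abelian group possessing a tight pair has the property that every saturated transfer system on it is realized by a linear isometries operad, so we are done.

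As for the main obstacle: there essentially is none at this stage — all the genuine content lives in the $p \geqslant 5$ counting argument of \cref{cyclic tight pair} and in the tensor construction underlying \cref{localization lemma}, and the present proof is pure bookkeeping. The only point requiring a moment's attention is verifying that the coprimality hypothesis of \cref{localization lemma} persists at each step of the induction, which is immediate since the $p_i$ are pairwise distinct primes. It is worth remarking that the bound on the primes is uniform — the construction of \cref{cyclic tight pair} needs only $p \geqslant 5$ regardless of the exponent $n$ or the number of prime factors — which is why this proof is so much stronger than Rubin's conjectured statement.
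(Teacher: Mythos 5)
Your proposal is correct and is exactly the paper's argument: decompose $G$ into coprime cyclic $p$-groups with $p\geqslant 5$, build a tight pair on each via \cref{cyclic tight pair}, combine them with \cref{localization lemma}, and conclude with \cref{cormain}. The paper leaves this assembly implicit (the theorem is stated with the remark that it follows immediately from those results), and your write-up fills in the same routine bookkeeping.
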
\qed

\subsection{Saturation conjecture for rank two groups}

The formalism of tight pairs is not only useful for their localization properties. It also turns out that the axioms a tight pair is required to satisfy are well-suited to powerful constructive techniques from extremal combinatorics. We first show a way to construct a sub-inductor from a diagram; this construction makes all of the sub-inductor axioms other than axiom~\ref{axiom-cover} trivial and also makes bounding the residue trivial, but at the cost of making it harder to verify axiom~\ref{axiom-cover}.

For a pair of subgroups $K\leqslant  H$, let
\[(K,H] = \{L\leqslant H:L\not\leqslant K\}.\]
Note that for all $K,L\leqslant H$ we have
\[(K\wedge L, L]\subseteq (K,H],\]
and for $K\leqslant L\leqslant H$ we have
\[(K,H] = (K,L]\amalg (L,H].\]

Given a diagram $D$, let $J = J[D]$ be defined by
\[J_K^H(U) = I_K^H(U)\setminus \bigcup_{L\in (K,H]} I_L^H(D(L)).\]

\begin{lem}\label{subind construction}
	Let $D$ be a $\Gal(\C/\R)$-invariant diagram such that $1_H\notin D(H)$ for all $H\leqslant G$, and let $J = J[D]$ as above. If $J$ satisfies sub-inductor axiom~\ref{axiom-cover}, then $J$ is a sub-inductor and for all $H\leqslant G$,
	\[\Res_J(H)\cap D(H)=\emptyset.\]
\end{lem}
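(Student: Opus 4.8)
The plan is to verify the sub-inductor axioms one at a time for $J = J[D]$, taking axiom \ref{axiom-cover} as a hypothesis, and then to deduce the disjointness statement. First I would check equivariance and join-preservation (axiom \ref{axiom-join}): $I_K^H$ is equivariant and join-preserving, the index set $(K,H]$ is $\Gal(\C/\R)$-stable since conjugation fixes each subgroup pointwise as a set, and each $D(L)$ is invariant by hypothesis, so the subtracted set $\bigcup_{L\in(K,H]} I_L^H(D(L))$ is an equivariant constant; subtracting a fixed set from a join-preserving equivariant map keeps it equivariant, and $J_K^H(U\cup U') = (I_K^H(U)\cup I_K^H(U'))\setminus(\cdots) = J_K^H(U)\cup J_K^H(U')$ since set difference distributes over union on the left. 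Axiom \ref{axiom-unit} ($1_H\in J_K^H(1_K)$) follows because $1_H\in I_K^H(1_K)$ by Frobenius reciprocity, while $1_H\notin I_L^H(D(L))$ for any $L$: if $1_H\hookrightarrow \chi^G$ for $\chi\in\widehat{L}$ then by Frobenius reciprocity $\chi = 1_L$, but $1_L\notin D(L)$ by hypothesis, so $1_H$ survives the subtraction.

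Next comes transitivity (axiom \ref{axiom-trans}), which I expect to be the main obstacle, so I would spend the most care here. The goal is $J_L^H\circ J_K^L = J_K^H$ for $K\leqslant L\leqslant H$. For the inclusion $\subseteq$: unwinding, $J_L^H(J_K^L(U)) = I_L^H(J_K^L(U))\setminus\bigcup_{M\in(L,H]} I_M^H(D(M))$, and $I_L^H(J_K^L(U)) = I_L^H(I_K^L(U)) \setminus I_L^H\bigl(\bigcup_{M'\in(K,L]} I_{M'}^L(D(M'))\bigr)$ — here I use that $J_K^L$ is a lattice homomorphism (from the Remark after Definition \ref{sub-ind def}, or more directly because it is $I_K^L$ intersected with a constant, and $V_L$ is Boolean) so it commutes with the complement, and then $I_L^H$ is a lattice homomorphism too (since $G$ is Abelian, as noted in the excerpt). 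By transitivity of $I$, $I_L^H I_K^L = I_K^H$ and $I_L^H I_{M'}^L = I_{M'}^H$. So $J_L^H(J_K^L(U)) = I_K^H(U) \setminus \bigl(\bigcup_{M'\in(K,L]} I_{M'}^H(D(M')) \cup \bigcup_{M\in(L,H]} I_M^H(D(M))\bigr)$. Since $(K,H] = (K,L]\amalg(L,H]$ as recorded just before the lemma, the two unions combine into exactly $\bigcup_{N\in(K,H]} I_N^H(D(N))$, giving $J_K^H(U)$ on the nose. The only genuine subtlety is justifying that $I_L^H$ commutes with the relative complement inside $V_L$, which needs $I_L^H$ to be a lattice homomorphism and $V_L$ Boolean — both available — so in fact the argument gives equality directly, not just one inclusion.

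Then axiom \ref{axiom-cover} is assumed, and axiom \ref{axiom-res} remains: for $K,L\leqslant H$ and $U\in V_L$, I want $R_K^H J_L^H(U)\subseteq J_{K\wedge L}^K R_{K\wedge L}^L(U)$. Since $J_L^H(U)\subseteq I_L^H(U)$, Mackey's formula (which gives $R_K^H I_L^H(U) \subseteq I_{K\wedge L}^K R_{K\wedge L}^L(U)$ for $G$ Abelian, as in the discussion of the canonical sub-inductor $I$) handles the "$I_{K\wedge L}^K R_{K\wedge L}^L(U)$" part of the target; what needs checking is that $R_K^H J_L^H(U)$ avoids $\bigcup_{M\in(K\wedge L,K]} I_M^K(D(M))$. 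Suppose $\chi\in\widehat{K}$ lies in $R_K^H(J_L^H(U))$, so $\chi\hookrightarrow R_K^H(\psi)$ for some $\psi\in J_L^H(U)\subseteq\widehat{H}$. If also $\chi\in I_M^K(D(M))$ for some $M\in(K\wedge L,K]$, then transitivity of induction/restriction and $M\leqslant K\leqslant H$ shows $\psi$ meets $I_M^H(D(M))$; since $M\leqslant K$ and $M\not\leqslant K\wedge L$ gives $M\not\leqslant L$, i.e. $M\in(L,H]$, this contradicts $\psi\in J_L^H(U)$. Hence axiom \ref{axiom-res} holds. Finally, for the disjointness conclusion $\Res_J(H)\cap D(H)=\emptyset$: by definition $\Res_J(H) = \bigcup_{K<H} J_K^H(\widehat{K})$, and $J_K^H(\widehat{K}) = I_K^H(\widehat{K})\setminus\bigcup_{L\in(K,H]} I_L^H(D(L))$; since $K<H$ means $K\in(K,H]$ is false but — wait, more carefully — we want every element of $J_K^H(\widehat K)$ to miss $D(H)$, and indeed $H\notin(K,H]$, but we can instead observe $K<H$ does not put $H$ in the subtracted index set, so this is not automatic. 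The right argument: actually $H\in(K,H]$ is false since $H\leqslant K$ fails, hmm — rather, I would argue directly that any $\chi\in J_K^H(\widehat K)\cap D(H)$ would lie in $I_H^H(D(H)) = D(H)$ with $H\in(K,H]$; but $H\notin(K,H]$ precisely because $(K,H]$ excludes subgroups $\leqslant K$ and $H\not\leqslant K$... so $H\in(K,H]$ actually. Let me restate cleanly: $(K,H] = \{L\leqslant H: L\not\leqslant K\}$, and since $K<H$ we have $H\not\leqslant K$, so $H\in(K,H]$; therefore $I_H^H(D(H)) = D(H)$ is one of the subtracted sets, so $J_K^H(\widehat K)\cap D(H)=\emptyset$. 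Taking the union over $K<H$ gives $\Res_J(H)\cap D(H)=\emptyset$.) This last step is essentially immediate once the index-set bookkeeping is straight.
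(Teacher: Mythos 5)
Your proposal is correct and follows essentially the same route as the paper: verify each axiom from the explicit form $J_K^H(U)=I_K^H(U)\setminus\bigcup_{L\in(K,H]}I_L^H(D(L))$, use the splitting $(K,H]=(K,L]\amalg(L,H]$ for transitivity, Frobenius reciprocity for the unit and restriction axioms, and the observation that $H\in(K,H]$ for the residue disjointness. One cosmetic point: the step where $I_L^H$ is pulled past a relative complement is best justified by noting that for Abelian $G$ the map $I_L^H$ is a preimage map under restriction of characters (as the paper observes), hence commutes with all set operations --- being a lattice homomorphism alone would not quite suffice.
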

\begin{proof}
	By assumption we know $J$ satisfies axiom~\ref{axiom-cover}. Axiom~\ref{axiom-join} follows since $J_K^H$ is of the form
	\[J_K^H(U) = I_K^H(U)\setminus S(K,H)\]
	where
	\[S(K,H) = \bigcup_{L\in (K,H]} I_L^H(D(L))\]
	is $\Gal(\C/\R)$-invariant. Axiom~\ref{axiom-unit} follows since $1_H\in I_L^H(D(L))$ implies $1_L\in D(L)$. For $K\leqslant L\leqslant H$, since $(K,H] = (K,L]\cup (L,H]$ we have
	\[S(K,H) = S(L,H)\cup I_L^H(S(K,L)),\]
	from which axiom~\ref{axiom-trans} follows easily.
	
	Now observe that by Frobenius reciprocity, given $K\leqslant H$ and $U\in V_K$, we have $\chi\in J_K^H(U)$ if and only if $\chi|_K\in U$ and $\chi|_M\notin D(M)$ for all $M\in (K,H]$. Let $K,L\leqslant H$ and $U\in V_K$, and let $\chi\in J_K^H(U)$. To show $J$ satisfies axiom~\ref{axiom-res}, we need to show 
	\[\chi|_L \in J_{K\wedge L}^L R_{K\wedge L}^K(U).\]
	But
	\[(\chi|_L)|_{K\wedge L} = (\chi|_K)|_{K\wedge L}\in R_{K\wedge L}^K(U),\]
	and similarly for all $M\in (K\wedge L,L]\subseteq (K,H]$ we have
	\[(\chi|_L)|_M = \chi|_M\notin D(M).\]
	Thus $\chi|_L \in J_{K\wedge L}^L R_{K\wedge L}^K(U)$ as claimed. Since $\chi$ was arbitrary, this shows $J$ satisfies axiom~\ref{axiom-res}.
	
	Finally, let $K\leqslant L\leqslant H$ and $U\in V_K$.
	
	To see the claim about residues, note that for all $K<H$ we have $H\in (K,H]$, so
	\[D(H)= I_H^H(D(H))\subseteq\bigcup_{L\in (K,H]} I_L^H(D(L)).\]
	Thus for all $K<H$ we have
	\[D(H)\cap J_K^H(\widehat{K})=\emptyset.\]
	Since $V_H$ is distributive, this implies
	\[D(H)\cap \Res_J(H) =\emptyset.\]
\end{proof}

\begin{dfn}
	A \emph{weak generating scheme} is a pair of diagrams $(A,T)$ such that
	\begin{enumerate}
		\item For all $H\neq\1$, $A(H)$ and $T(H)$ are both $\Gal(\C/\R)$-invariant and non-empty, and
		\[A(H)\cap T(H)=\emptyset.\]
		\item For all $\1\neq K<H$ we have
		\[T(K)\cap R_K^H(A(H)\cup T(H))=\emptyset.\]
	\end{enumerate}
\end{dfn}

The most important part of this definition is that it says $T$ is a $\Gal(\C/\R)$-invariant antichain with $|T(H)|\geqslant 2$ for all $H\neq\1$. The constraints involving $A$ should be thought of as a minor additional technical requirement.

\begin{prop}\label{tight construction}
	Let $G$ be a finite Abelian group, and let $(A,T)$ be a weak generating scheme. Let $J = J[A\cup T]$ and let 
	\[D = \lrangle{A\cup T}_R\setminus T,\]
	i.e. $\chi\in D(H)$ if and only if $\chi\notin T(H)$ and there exists $L\geqslant H$ such that $\chi\in R_H^L(A(L)\cup T(L))$.
	
	If $J$ satisfies sub-inductor axiom~\ref{axiom-cover}, then $(D,J)$ is a tight pair.
\end{prop}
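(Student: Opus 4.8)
The plan is to verify the three conditions in the definition of a tight pair (\cref{tight dfn}) for the pair $(D,J)$ constructed from the weak generating scheme $(A,T)$. The assumption is that $J = J[A\cup T]$ satisfies axiom~\ref{axiom-cover}, so by \cref{subind construction} (applied to the diagram $A\cup T$, which satisfies $1_H\notin A(H)\cup T(H)$ since $1_H\notin T(H)$ by $T(H)\cap R_K^H(\cdots)$ considerations and $1_H\notin A(H)$ because $1_H\in A(H)\cap T(H)=\emptyset$ would be contradicted — wait, need $1_H\notin A(H)$; this should follow from the weak generating scheme axioms, e.g.\ applying axiom (2) down to $\1$ or it should be added as a hypothesis — I would check this carefully) we already get that $J$ is a sub-inductor and $\Res_J(H)\cap (A(H)\cup T(H)) = \emptyset$ for all $H$. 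So the sub-inductor half of the claim is essentially free. It remains to check $D = \lrangle{A\cup T}_R\setminus T$ is $R$-stable and $\Gal(\C/\R)$-invariant (condition 1), that $D(H)\not\subseteq\Res_J(H)$ (condition 3), and the key non-inclusion $I_K^H(D(K))\not\subseteq D(H)\cup\Res_J(H)$ for all $K<H$ (condition 2).

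For the easy conditions: $\lrangle{A\cup T}_R$ is $R$-stable by construction and $\Gal(\C/\R)$-invariant since $A$, $T$, and the maps $R_H^L$ are all equivariant; subtracting the equivariant diagram $T$ preserves equivariance, but one must check $R$-stability survives the subtraction — this is where the weak generating scheme axiom (2), saying $T(K)\cap R_K^H(A(H)\cup T(H)) = \emptyset$, is used: it guarantees that the elements of $\lrangle{A\cup T}_R(H)$ which get removed (those in $T(H)$) never restrict into something we needed to keep, so that $R_K^H(D(H))\subseteq D(K)$ still holds. For condition 3, note $T(H)\subseteq\lrangle{A\cup T}_R(H)$ and $T(H)\cap D(H)=\emptyset$ by definition, but also $T(H)\cap\Res_J(H)=\emptyset$ from \cref{subind construction}; so to separate $D(H)$ from $\Res_J(H)$ I would instead use $A(H)$: an element $\chi\in A(H)$ lies in $\lrangle{A\cup T}_R(H)$, lies in $D(H)$ (since $A(H)\cap T(H)=\emptyset$), and lies outside $\Res_J(H)$ by \cref{subind construction}; since $A(H)\neq\emptyset$ this gives condition 3. (For $H=\1$ one checks the degenerate case separately, e.g.\ $\Res_J(\1)=\emptyset$.)

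For condition 2, fix $K<H$. I want some $\chi\in I_K^H(D(K))$ with $\chi\notin D(H)$ and $\chi\notin\Res_J(H)$. Pick $\psi\in A(K)$, so $\psi\in D(K)$; since $G$ is Abelian, $I_K^H(\psi)$ is exactly the fiber over $\psi$ of the restriction map $\w{H}\to\w{K}$, which is nonempty. Now the point is to choose $\chi$ in this fiber that avoids $D(H)\cup\Res_J(H)$. Recall $\Res_J(H)\cap(A(H)\cup T(H))=\emptyset$; and by the explicit description, $\chi\in\Res_J(H)$ means $\chi\in J_M^H(\w M)$ for some $M<H$, which by Frobenius reciprocity (as in the proof of \cref{subind construction}) forces $\chi|_L\notin A(L)\cup T(L)$ for all $L\in(M,H]$, in particular $\chi|_K\notin A(K)\cup T(K)$ whenever $K\in(M,H]$, i.e.\ whenever $K\not\leqslant M$. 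The cleanest route: among elements $\chi$ of the fiber $I_K^H(\psi)$, I claim at least one avoids both $D(H)$ and $\Res_J(H)$, and I would argue this by a counting/extremality estimate — $|I_K^H(\psi)| = [H:K]$, the portion hitting $\Res_J(H)$ is controlled because each such $\chi$ restricts to $A(K)\cup T(K)$ trivially (contradiction with $\chi|_K=\psi\in A(K)$!), and the portion hitting $D(H)$ is likewise bounded. In fact the contradiction just noted is decisive: if $\chi\in I_K^H(\psi)$ with $\psi\in A(K)$, then $\chi|_K = \psi\in A(K)$, but $\chi\in\Res_J(H)$ would require (taking $M$ a maximal subgroup witnessing membership, noting $K\not\leqslant M$ is impossible only if $K\leqslant M$, but then $\chi|_M$ and $\chi|_K$...) — I need to handle the case $K\leqslant M$ carefully; here one uses that $\Res_J(H)=\bigcup_{M<H}J_M^H(\w M)$ and picks the witness, and if $K\leqslant M<H$ then $\chi|_M\notin A(M)\cup T(M)$ while we can separately arrange $\chi|_H=\chi\notin A(H)$... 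Let me instead just bound cardinalities: $I_K^H(\psi)$ meets $\Res_J(H)$ in a set of size $\leqslant\sum$ over the (few) relevant $J_M^H$ pieces, meets $D(H)$ in a bounded set, and $[H:K]$ exceeds the sum — this is the same style of argument as in \cref{cyclic tight pair}. So the real content, and the main obstacle, is establishing this counting inequality cleanly and in the needed generality; I expect the statement as given is designed so that the weak generating scheme axioms plus \cref{subind construction} make it go through, with the $A(K)\ni\psi$ versus $\chi\in\Res_J(H)$ tension doing most of the work, but pinning down exactly why a good $\chi$ exists (rather than all of the fiber being bad) is the step requiring genuine care.

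\medskip
\noindent\emph{(Proof to follow; the argument proceeds by checking conditions (1)--(3) of \cref{tight dfn} in turn, using \cref{subind construction} for the sub-inductor axioms and the residue disjointness, the $R$-stability axiom (2) of a weak generating scheme for $R$-stability of $D$, and a counting argument on the fiber $I_K^H(\psi)$ for $\psi\in A(K)$ to establish condition (2).)}
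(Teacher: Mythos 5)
There is a genuine gap, and it is exactly where you said the "genuine care" is needed: condition (2) of \cref{tight dfn}. Your plan — fix $\psi\in A(K)$ and find, by a cardinality estimate in the style of \cref{cyclic tight pair}, some $\chi$ in the fiber $I_K^H(\psi)$ avoiding $D(H)\cup\Res_J(H)$ — cannot be carried out in the generality of the proposition, because there are no size hypotheses anywhere: a weak generating scheme only demands non-emptiness, so there is nothing to count against. Nothing in the axioms prevents $A(H)$ from containing the entire fiber $I_K^H(\psi)$ (axiom (2) constrains $T(K)$ against restrictions from above, but places no constraint on $A(K)$ versus $A(H)$), in which case every element of your chosen fiber lies in $D(H)$ and the witness you want does not exist in that fiber. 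Moreover for $K=\1$ the set $A(\1)$ may be empty, so $\psi$ need not exist at all; and the "decisive contradiction" you sketch via $\chi|_K=\psi\in A(K)$ collapses precisely in the case $K\leqslant M$ that you flag and then abandon.

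The intended witness is $T(H)$ itself, and you actually wrote down the two facts that make it work while discussing condition (3), then discarded them: $T(H)\cap D(H)=\emptyset$ by construction of $D$, and $T(H)\cap\Res_J(H)=\emptyset$ by \cref{subind construction}. The missing observation is that $T(H)\subseteq I_K^H(D(K))$ for every $K<H$: axiom (2) of a weak generating scheme gives $T(K)\cap R_K^H(T(H))=\emptyset$, while $R_K^H(T(H))\subseteq\lrangle{A\cup T}_R(K)$ automatically, so $R_K^H(T(H))\subseteq D(K)$; then by Frobenius reciprocity $T(H)\subseteq I_K^HR_K^H(T(H))\subseteq I_K^H(D(K))$. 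Since $T(H)\neq\emptyset$ whenever $H\neq\1$ (and $H\neq\1$ is automatic when $K<H$), condition (2) follows with no counting whatsoever — the proposition is purely formal. Your treatment of conditions (1) and (3) and your use of \cref{subind construction} agree with the paper's proof; your side worry about needing $1_H\notin A(H)\cup T(H)$ to invoke \cref{subind construction} is a fair point to check (it is arranged in the paper's application of the proposition), but it is not the essential issue here.
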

\begin{proof}
	By \cref{subind construction} we know $J$ is a sub-inductor and
	\[\left(A(H)\cup T(H)\right)\cap \Res_J(H)=\emptyset\]
	for all $H\leqslant G$. Clearly $D$ is $\Gal(\C/\R)$-invariant. For all $H\neq\1$ we have
	\[\emptyset\neq A(H)\subseteq D(H)\setminus\Res_J(H),\]
	and for $H=\1$ we have $1\in D(H)\setminus\Res_J(H)$. Thus for all $H$ we have
	\[D(H)\not\subseteq \Res_J(H).\]
	
	Let $K<H$. Since $T(K)\cap R_K^H(T(H))=\emptyset$, we have $R_K^H(T(H))\subseteq D(K)$. But then
	\[T(H)\subseteq I_K^HR_K^H(T(H))\subseteq I_K^H(D(K)),\]
	so
	\[T(H)\subseteq I_K^H(D(K))\setminus\left(D(H)\cup\Res_J(H)\right).\]
	
	All that remains is to show $D$ is $R$-stable. Let $K<H$ and suppose $\chi\in D(H)$. Then by definition we have $\chi\in R_H^L(A(L)\cup T(L))$ for some $L\geqslant H$. Since $(A,T)$ is a weak generating scheme, this implies
	\[\chi|_K\notin T(K)\]
	and hence $\chi|_K\in D(K)$.
\end{proof}

\cref{subind construction} and \cref{tight construction} are purely formal, but there doesn't seem to be a good way of choosing a weak generating scheme such that formally guarantees axiom~\ref{axiom-cover} holds. Instead we will use the probabilistic method from constructive extremal combinatorics to show the existence of some weak generating scheme for which axiom~\ref{axiom-cover} holds. Note that axiom~\ref{axiom-res} automatically implies
\[R_K^HJ_K^H(U)\subseteq U\]
for all $K\leqslant H$ and $U\in V_K$, so to prove axiom~\ref{axiom-cover} it suffices to show that for all $\chi\in\widehat{K}$ and $H\geqslant K$ we have
\[J_K^H(\chi)\neq\emptyset.\]
This formulation of the constraint is much easier to work with.

Before describing our randomized construction, it's useful to extract a bit more notation. Let $G$ be an Abelian $p$-group, and let $S_G$ be the set of triples $(\chi,K,H)$ where $K<H\leqslant G$ and $\chi\in\widehat{K}$. Let $\operatorname{rk}(K,H) = \log_p([H:K])$. For $i>0$, let $S_G^i\subseteq S_G$ be the subset of all $(\chi,K,H)\in S_G$ such that $\operatorname{rk}(K,H)=i$.

\begin{dfn}
	Let $C\geqslant 0$. A diagram $D$ is \emph{$C$-clustered} if for all $(\chi,K,H)\in S_G$ we have
	\[|J[D]_K^H(\chi)|\geqslant C^{\operatorname{rk}(K,H)}.\]
\end{dfn}

In fact to show that $D$ is $C$-clustered it suffices to verify the condition for all $(\chi,K,H)\in S_G^1$:

\begin{lem}
	Let $G$ be an Abelian $p$-group and $D$ a diagram, and let
	\[C_1 = \min\{|J[D]_K^H(\chi)|:(\chi,K,H)\in S_G^1\}.\]
	Then $D$ is $C_1$-clustered.
\end{lem}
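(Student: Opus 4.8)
```latex
The plan is to prove $C_1$-clusteredness by induction on $\operatorname{rk}(K,H)$, using the transitivity axiom for sub-inductors (axiom~\ref{axiom-trans}) to factor $J_K^H$ through an intermediate subgroup. The base case $\operatorname{rk}(K,H)=1$ is exactly the definition of $C_1$. For the inductive step, suppose $\operatorname{rk}(K,H)=i\geqslant 2$ and pick any intermediate subgroup $K<L<H$ (which exists since $[H:K]=p^i$ with $i\geqslant 2$, so we may take $L$ with $[L:K]=p$). Then $\operatorname{rk}(K,L)=1$ and $\operatorname{rk}(L,H)=i-1$, and by axiom~\ref{axiom-trans} we have $J_K^H(\chi) = J_L^H(J_K^L(\chi))$.

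First I would observe that $J_K^L(\chi)$ is a non-empty $\Gal(\C/\R)$-invariant subset of $\widehat{L}$ of size at least $C_1 = C_1^{\operatorname{rk}(K,L)}$, by the base case. Now I need a lower bound on $|J_L^H(S)|$ for a set $S\subseteq\widehat{L}$ in terms of $|S|$. The key point is that $J_L^H = J[D]_L^H$ has the form $J_L^H(\psi) = I_L^H(\psi)\setminus S(L,H)$ for a fixed set $S(L,H)$ independent of $\psi$, and $I_L^H$ is a lattice homomorphism (in particular injective on singletons since $G$ is Abelian, as $I_L^H(\psi)$ is the preimage of $\{\psi\}$ under the restriction map $\widehat{H}\to\widehat{L}$), and the sets $I_L^H(\psi)$ for distinct $\psi\in\widehat{L}$ are pairwise disjoint. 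Hence $J_L^H(S) = \bigcup_{\psi\in S} J_L^H(\psi)$ is a disjoint union, so $|J_L^H(S)| = \sum_{\psi\in S}|J_L^H(\psi)|$. Applying the inductive hypothesis to each $(\psi, L, H)\in S_G^{i-1}$ gives $|J_L^H(\psi)|\geqslant C_1^{i-1}$, whence
\[
|J_K^H(\chi)| = |J_L^H(J_K^L(\chi))| = \sum_{\psi\in J_K^L(\chi)}|J_L^H(\psi)| \geqslant |J_K^L(\chi)|\cdot C_1^{i-1} \geqslant C_1\cdot C_1^{i-1} = C_1^{i}.
\]

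The main obstacle — though a minor one — is justifying that $J_L^H$ distributes over unions \emph{disjointly} on subsets of $\widehat{L}$, so that counting is exact rather than merely subadditive; this follows from the explicit formula $J_L^H(S) = I_L^H(S)\setminus S(L,H)$ together with the disjointness of the fibers $I_L^H(\psi)$ in the Abelian case, both noted in the discussion preceding \cref{subind construction} and in the remark after \cref{sub-ind def}. One should also note the edge case where $D$ may fail axiom~\ref{axiom-cover}: the statement makes no such hypothesis, but if some $J_K^H(\chi)$ is empty then $C_1 = 0$ and the claimed bound $|J[D]_K^H(\chi)|\geqslant 0^{\operatorname{rk}(K,H)} = 0$ holds trivially for all triples, so the induction is vacuous and there is nothing to prove. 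Thus the argument is clean in all cases.
```
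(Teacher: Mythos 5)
Your proof is correct and takes essentially the same route as the paper: the paper proves the general supermultiplicativity $C_k\geqslant C_iC_j$ by splitting $K\leqslant L\leqslant H$ and using the join/transitivity axioms plus disjointness of the decomposition, while you run the same decomposition with the special split $1+(k-1)$ and justify disjointness directly from the explicit formula $J[D]_L^H(\psi)\subseteq I_L^H(\psi)$ and the disjointness of induction fibers in the Abelian case (the paper cites axiom~\ref{axiom-res} for this instead). Your handling of the degenerate case $C_1=0$ is a harmless extra remark; no gap.
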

\begin{proof}
	For $i\geqslant 1$, let
	\[C_i = \min\{|J[D]_K^H(\chi)|:(\chi,K,H)\in S_G^i\}.\]
	It suffices by induction to prove that for all $i+j=k$ we have
	\[C_k\geqslant C_iC_j.\]
	
	Let $(\chi,K,H)\in S_G^k$ be a minimizer, and let $K\leqslant L\leqslant H$ be an arbitrary intermediate subgroup such that $\operatorname{rk}(K,L)=i$. By sub-inductor axioms~\ref{axiom-join} and~\ref{axiom-trans} we have
	\[J[D]_K^H(\chi) = \bigcup_{\xi\in J[D]_K^L(\chi)} J[D]_L^H(\xi),\]
	and by axiom~\ref{axiom-res} this union is disjoint, so
	\[C_k = |J[D]_K^H(\chi)|\geqslant |J[D]_K^L(\chi)|\min\{|J[D]_L^H(\xi)|\}\geqslant C_iC_j.\]
\end{proof}

Given a $\Gal(\C/\R)$-invariant $2$-clustered diagram $D$ and a subset $T\subseteq\Sub(G)\setminus\{\1\}$, let $D^T\supseteq D$ be the random diagram defined by
\[D^T(H)=\begin{cases}
	D(H)\cup\{\tau(H),\overline{\tau(H)}\} & H\in T\\
	D(H) & H\notin T
\end{cases}\]
where $\{\tau(H)\}_{H\in T}$ is a collection of independent random variables sampled uniformly from
\[\tau(H)\in J[D]_{\1}^H(1)\setminus\{1_H\}.\]
Note that this is well-defined since $D$ being $2$-clustered implies in particular that $|J[D]_{\1}^H(1)|\geqslant 2$ for all $H\neq\1$.

We can immediately observe a few properties of this construction:

\begin{lem}\label{inductive props}
	Let $G$ be an Abelian $p$-group, $D$ a $\Gal(\C/\R)$-invariant $2$-clustered diagram, and $T\subseteq\Sub(G)\setminus\{\1\}$. Then
	\begin{enumerate}
		\item The diagram $D^T$ is $\Gal(\C/\R)$-invariant and $1_H\notin D(H)$ for all $H$.
		\item For all $H\in T$,
		\[D^T(H)\setminus D(H)\neq\emptyset.\]
		\item For all $\1\neq K\leqslant H$,
		\[R_K^H(D^T(H)\setminus D(H))\cap D(K)=\emptyset.\]
	\end{enumerate}
\end{lem}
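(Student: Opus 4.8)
The plan is to verify the three properties essentially by unwinding the definition of $D^T$ and using the fact that the randomly chosen elements $\tau(H)$ live in $J[D]_{\1}^H(1)$. All three claims are deterministic statements that hold for every sample of the random variables $\{\tau(H)\}_{H\in T}$, so there is no probabilistic content here; this lemma is just bookkeeping preparation for the genuinely probabilistic argument later.

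For part (1): $\Gal(\C/\R)$-invariance of $D^T(H)$ follows because $D(H)$ is invariant by hypothesis and the added set $\{\tau(H),\overline{\tau(H)}\}$ is manifestly closed under complex conjugation (and at subgroups $H\notin T$ nothing is added). For the statement $1_H\notin D^T(H)$, at subgroups not in $T$ this is the hypothesis on $D$, and at subgroups $H\in T$ we added only $\tau(H)$ and $\overline{\tau(H)}$, both of which lie in $J[D]_{\1}^H(1)\setminus\{1_H\}$ by the sampling rule, hence are $\neq 1_H$. (Here I am implicitly also using that $1_H\notin D(H)$ to begin with, which is part of the hypothesis.)

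For part (2): if $H\in T$ then $D^T(H)\setminus D(H)$ contains $\tau(H)$, and $\tau(H)\notin D(H)$ because $\tau(H)\in J[D]_{\1}^H(1)$, and by the residue computation in \cref{subind construction} (or directly: $J[D]_{\1}^H(1) = I_{\1}^H(1)\setminus\bigcup_{L\in(\1,H]}I_L^H(D(L))$, and $H\in(\1,H]$) we have $J[D]_{\1}^H(1)\cap D(H)=\emptyset$. So $D^T(H)\setminus D(H)\supseteq\{\tau(H)\}\neq\emptyset$.

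For part (3): the set $D^T(H)\setminus D(H)$ is empty unless $H\in T$, in which case it equals (a subset of) $\{\tau(H),\overline{\tau(H)}\}$. So fix $\1\neq K\leqslant H$ with $H\in T$ and let $\chi\in D^T(H)\setminus D(H)$, say $\chi=\tau(H)$ (the case $\chi=\overline{\tau(H)}$ is symmetric by conjugation-invariance of $D(K)$). Then $\chi\in J[D]_{\1}^H(1)$, and by Frobenius reciprocity as spelled out in the proof of \cref{subind construction}, membership $\chi\in J[D]_{\1}^H(1)$ means $\chi|_M\notin D(M)$ for every $M\in(\1,H]$; since $K\neq\1$ we have $K\in(\1,H]$, so $\chi|_K = R_K^H(\chi)\notin D(K)$. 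Hence $R_K^H(D^T(H)\setminus D(H))\cap D(K)=\emptyset$. The main obstacle, such as it is, is simply making sure one invokes the right characterization of $J[D]_{\1}^H(1)$ — namely the Frobenius-reciprocity description from \cref{subind construction} that $\chi\in J[D]_K^H(U)$ iff $\chi|_K\in U$ and $\chi|_M\notin D(M)$ for all $M\in(K,H]$ — rather than trying to argue from the set-difference formula directly; with that in hand everything is immediate.
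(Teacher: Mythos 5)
Your proof is correct and is essentially the argument the paper leaves implicit: the lemma carries no written proof (it is asserted as immediate from the construction), and the content is exactly the unwinding you perform --- invariance because $\{\tau(H),\overline{\tau(H)}\}$ is conjugation-closed, and claims (2) and (3) from the Frobenius-reciprocity description of membership in $J[D]_{\1}^H(1)$ (an element of it restricts outside $D(M)$ for every $\1\neq M\leqslant H$, in particular outside $D(H)$ and outside $D(K)$), together with conjugation-invariance of $D(K)$ to handle $\overline{\tau(H)}$.

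One caveat on part (1): you say that $1_H\notin D(H)$ is ``part of the hypothesis,'' but it is not --- the lemma only assumes $D$ is $\Gal(\C/\R)$-invariant and $2$-clustered, and $2$-clusteredness does not force $1_H\notin D(H)$ (for instance $G=C_p$ with $D(\1)=\emptyset$, $D(G)=\{1_G\}$ is invariant and $2$-clustered). So the clause ``$1_H\notin D(H)$'' in claim (1), read literally as a conclusion about the input diagram, is not derivable by you or anyone else; it should be read either as the assertion $1_H\notin D^T(H)\setminus D(H)$, which your sampling-rule observation does prove (both $\tau(H)$ and $\overline{\tau(H)}$ lie in $J[D]_{\1}^H(1)\setminus\{1_H\}$ and $1_H$ is self-conjugate), or with $1_H\notin D(H)$ added as a standing hypothesis, which the application in \cref{thm-ranktwo} supplies inductively because the iteration starts from the empty diagram. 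Under either reading your argument goes through unchanged; just don't attribute the extra assumption to the stated hypotheses.
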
\qed

We will construct a weak generating scheme inductively using the construction above. The following lemma is our main technical result, which shows that this construction preserves some level of clusteredness when $G$ has rank two. We delay the proof until the end of this section.

\begin{lem}\label{inductive lemma}
	Let $G$ be a rank two Abelian $p$-group with $|G| = p^n$, $D$ a $\Gal(\C/\R)$-invariant $C$-clustered diagram with
	\[C > \max\{p^{1-1/n},10^4\},\]
	and $T\subseteq\Sub(G)\setminus\{\1\}$. Let
	\[\alpha = \sum_{H\in T} \frac{1}{|H|}.\]
	Also let $\beta = p/C$, $\gamma = (\beta^{-n}-1/p)^{-1}$, and
	\[\rho = e^{-15(\alpha+1)\beta\gamma}.\]
	Suppose
	\[p>\frac{\rho C^{1/4}}{5(3 \log(2) - 2)(\alpha + 1) \gamma},\]
	and let
	\[C' = \frac{\rho}{2}C.\]
	
	Then $D^T$ is $C'$-clustered with probability at least
	\[1-2p^{3n+3/4}e^{-\frac{\rho}{5}C^{1/4}}.\]
	In particular, if
	\[C>\left(5\log(2p^{3n+3/4})/\rho\right)^4\]
	then with nonzero probability $D^T$ is $C'$-clustered.
\end{lem}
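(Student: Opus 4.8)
The plan is to apply the probabilistic method directly to the random diagram $D^T$, showing that with high probability the "clusteredness" drop from $C$ to $C' = \tfrac{\rho}{2}C$ is small enough. By the reduction already established, it suffices to control $|J[D^T]_K^H(\chi)|$ for all triples $(\chi,K,H)\in S_G^1$, i.e.\ for $H$ covering $K$ with $\operatorname{rk}(K,H)=1$; the multiplicative submultiplicativity lemma then propagates the bound to all of $S_G$. Fix such a triple. Compared to $J[D]_K^H(\chi) = I_K^H(\chi)\setminus\bigcup_{L\in(K,H]}I_L^H(D(L))$, the only change in passing to $D^T$ is that for each $L\in(K,H]\cap T$ we must also delete the lifts of the two new characters $\{\tau(L),\overline{\tau(L)}\}$ from $I_K^H(\chi)$. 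So $|J[D^T]_K^H(\chi)| \geqslant |J[D]_K^H(\chi)| - \sum_{L\in(K,H]\cap T} |I_K^H(\chi)\cap I_L^H(\{\tau(L),\overline{\tau(L)}\})|$, and the heart of the matter is to bound the expectation of that error sum and then apply a concentration inequality.

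First I would bound the relevant intervals: for a rank two Abelian $p$-group, the poset $(K,H]$ of subgroups $L\leqslant H$ with $L\not\leqslant K$ has a controlled size, and more importantly, for each $L$ the quantity $|I_K^H(\chi)\cap I_L^H(\eta)|$ for a single character $\eta\in\widehat L$ is either $0$ or equals $[H:LK]$ (by Frobenius reciprocity / the Abelian structure), so each term in the error sum is at most $[H:L]$-ish and is nonzero only when $\tau(L)$ happens to restrict compatibly with $\chi$. Since $\tau(L)$ is sampled uniformly from $J[D]_{\1}^L(1)\setminus\{1_L\}$, which has size $\geqslant C^{\operatorname{rk}(L)}-1$ by $C$-clusteredness, the probability that $\tau(L)$ contributes is roughly $[H:LK]/(C^{\operatorname{rk}(L)}-1)$, weighted by the number of characters it could collide with. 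Summing over $L\in(K,H]$, using the stratification of $(K,H]$ by rank and the rank-two structure of $G$ (there are at most $O(p)$ subgroups of each rank $i<n$ and the "$1/|H|$" weights $\alpha = \sum_{H\in T}1/|H|$ absorb the contribution of $T$), one obtains $\mathbb{E}[\text{error}] \leqslant (1-\rho)\cdot|J[D]_K^H(\chi)|$ or so, where $\rho = e^{-15(\alpha+1)\beta\gamma}$ comes out of the geometric-series estimate with $\beta = p/C$, $\gamma = (\beta^{-n}-1/p)^{-1}$. This is the computational core and is where the somewhat baroque constants in the statement get pinned down; I would organize it as: (i) a per-$L$ collision bound, (ii) a sum over a fixed-rank stratum, (iii) a geometric sum over strata $i = 1,\dots,n-1$, (iv) folding in $T$ via $\alpha$.

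With the expectation under control, the error sum is a sum of independent (across distinct $L$) bounded nonnegative contributions — the $\tau(L)$ are independent — so I would apply a one-sided Chernoff/Bernstein bound to get that the error exceeds $(1-\rho/2)|J[D]_K^H(\chi)|$ with probability at most $e^{-c\rho C^{1/4}}$ for the appropriate constant; the $C^{1/4}$ (rather than $C$) and the hypothesis $p > \rho C^{1/4}/(5(3\log 2 - 2)(\alpha+1)\gamma)$ arise because individual terms can be as large as $\sim p^{n-1}$ while the sum is $\sim C^{n-1}$, so the effective variance-to-range ratio in Bernstein's inequality only yields a stretched exponential in $C^{1/4}$ rather than in $C$ — this mismatch is exactly why the final bound on $f$ is non-elementary. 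Then I union-bound over all triples in $S_G^1$: there are at most $p^{3n+3/4}$ of them (a crude count: $O(p^n)$ choices of $H$, $O(p)$ of $K$, and $|\widehat K| \leqslant p^n$ of $\chi$, with the fractional exponent a slack term), giving failure probability $\leqslant 2p^{3n+3/4}e^{-\frac{\rho}{5}C^{1/4}}$, and the "in particular" clause is just the observation that this is $<1$ once $C > (5\log(2p^{3n+3/4})/\rho)^4$. The main obstacle I anticipate is step (i)--(iv): getting the collision-counting tight enough in the rank-two case that the geometric series over strata actually converges to something like $\rho$ rather than blowing up — this requires genuinely using that $G$ has rank two (so each rank level contributes only $O(p)$ subgroups and the restriction maps have small fibers), and it is the only place where the rank hypothesis is essential.
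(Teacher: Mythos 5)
Your reduction to triples in $S_G^1$ and the final union bound match the paper, but the core of your argument --- bound the expected number of deleted characters and then apply Bernstein --- does not work in the regime where the lemma is actually used, and it misses the paper's key mechanism. A preliminary point: for $K\lessdot H$ and $L\in(K,H]$ we have $K\vee L=H$, so a character of $H$ is determined by its restrictions to $K$ and $L$; hence each of your error terms $|I_K^H(\chi)\cap I_L^H(\{\tau(L),\overline{\tau(L)}\})|$ is at most $2$, not of order $p^{n-1}$, and your explanation of the exponent $C^{1/4}$ via a variance-to-range ratio is not where it comes from. The fatal issue is the claimed bound $\mathbb{E}[\mathrm{error}]\leqslant(1-\rho)\,|J[D]_K^H(\chi)|$. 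The probability that $\tau(L)$ hits $X=J[D]_K^H(\chi)$ is of order $\gamma p/|L|$, so the expected number of deletions counted with multiplicity is of order $\alpha\beta\gamma\,C$; in the intended application (\cref{thm-ranktwo}) the quantity $(\alpha+1)\beta\gamma$ is large --- that is precisely why $\rho=e^{-15(\alpha+1)\beta\gamma}$ is exponentially small --- so the expected number of deletions far exceeds $|X|$, the inequality $|J[D^T]_K^H(\chi)|\geqslant|X|-\sum_L E_L$ is vacuous in expectation, and no concentration inequality applied to that additive error can rescue it.

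What actually saves the lemma is a collision (coupon-collector) phenomenon: each hit deletes one block of size at most $2$ of the partition $P_L$ of $X$ by $\Gal(\C/\R)$-orbits of restrictions to $L$, and conditioned on hitting, that block is uniformly distributed; so the probability that a fixed element of $X$ survives all $m$ hits is about $e^{-5m/|X|}\geqslant\rho$, giving a surviving set of size $\approx\rho|X|$ even though $2m\gg|X|$. This multiplicative, per-element analysis is exactly what the paper isolates in \cref{lem-part} (structure of the partitions $P_L$) and \cref{lem-coupon}, combined with a two-stage argument: first a Chernoff bound showing that the number of $L\in T$ which hit $X$ is at most $3(\alpha+1)\gamma p$ except with probability $e^{-\rho C^{1/4}/5}$ (this is where the hypothesis on $p$ enters), then, conditioning on which $L$ hit, the coupon lemma. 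Note also that survival events of distinct elements of $X$ are \emph{not} independent (conjugate pairs and shared blocks), which is why \cref{lem-coupon} needs the Hajnal--Szemer\'edi equitable-coloring device; the exponent $C^{1/4}$ and the extra factor $p^{3/4}$ in the failure probability come from the $|X|^{1/4}$-sized color classes there, not from a Bernstein range effect. Your outline contains no substitute for this collision analysis, so it has a genuine gap at its central step.
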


We also need an estimate for rank two groups on the divisor sum $\alpha$ featuring in \cref{inductive lemma}.

\begin{lem}\label{divisor sum}
	Let $G$ be a rank two Abelian $p$-group, and let $T_i\subseteq\Sub(G)$ be the set of subgroups $H\leqslant G$ such that $|H|=p^i$. Then for all $k\in\R$,
	\[\sum_{H\in T_i}\frac{1}{|H|^k}\leqslant 2p^{i(1-k)}.\]
	Furthermore, if $|G|=p^n$ then
	\[\sum_{\1\neq H\leqslant G}\frac{1}{|H|}\leqslant 2n,\]
	and if $p\geqslant 3$ then for all $k\geqslant 2$ we have
	\[\sum_{\1\neq H\leqslant G}\frac{1}{|H|^k}\leqslant 3p^{1-k}.\]
\end{lem}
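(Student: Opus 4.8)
The plan is to reduce all three inequalities to a single combinatorial input: an upper bound on the number $|T_i|$ of subgroups of $G$ of order $p^i$. First I would write $G\cong C_{p^a}\times C_{p^b}$ with $a\geqslant b\geqslant 0$ and $a+b=n$, and invoke the classical enumeration of subgroups of a product of two cyclic $p$-groups (derivable from Goursat's lemma, or from the relevant Hall polynomials): the number of subgroups of order $p^i$ equals $\frac{p^{m+1}-1}{p-1}$ with $m=\min(i,b,n-i)$. Since $m\leqslant i$, this yields the only estimate I will actually need,
\[|T_i|\;\leqslant\;1+p+\cdots+p^i\;=\;\frac{p^{i+1}-1}{p-1}\;\leqslant\;\frac{p}{p-1}\,p^i\;\leqslant\;2p^i,\]
the last step using $p\geqslant 2$. (If one prefers to avoid quoting the exact count, the weaker bound $|T_i|\leqslant 1+p+\cdots+p^i$ also follows directly by stratifying the subgroups of order $p^i$ according to their socle $H\cap G[p]\leqslant G[p]\cong\F_p^{\leqslant 2}$ and inducting on $i$.)

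Granting this, the first claim is immediate from $\sum_{H\in T_i}|H|^{-k}=|T_i|\,p^{-ik}\leqslant 2p^{i(1-k)}$. For the second claim I would observe that, when $|G|=p^n$, every nontrivial subgroup lies in some $T_i$ with $1\leqslant i\leqslant n$, so summing the first claim with $k=1$ over $i$ gives $\sum_{\1\neq H\leqslant G}|H|^{-1}\leqslant 2n$. For the third claim, with $p\geqslant 3$ and $k\geqslant 2$ I would set $x=p^{1-k}\leqslant 1/3$ and sum the first claim over all $i\geqslant 1$ (harmlessly extending the range to $\infty$ since all terms are positive):
\[\sum_{\1\neq H\leqslant G}\frac{1}{|H|^k}\;\leqslant\;2\sum_{i\geqslant 1}x^i\;=\;\frac{2x}{1-x}\;\leqslant\;3x\;=\;3p^{1-k},\]
using that $\tfrac{2x}{1-x}\leqslant 3x$ is equivalent to $x\leqslant 1/3$.

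There is no serious obstacle here. The one genuinely non-formal ingredient is the enumeration of subgroups of a rank-two Abelian $p$-group of a prescribed order, which is why the argument does not extend to higher rank. Beyond that, the only point requiring care is that the geometric-series bound in the third claim is exactly tight at the boundary case $p=3$, $k=2$, so the hypothesis $p\geqslant 3$ there cannot be weakened.
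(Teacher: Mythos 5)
Your proposal is correct and takes essentially the same route as the paper: bound the number of subgroups of order $p^i$ by (roughly) $\frac{p}{p-1}p^i\leqslant 2p^i$, deduce the first claim immediately, and then sum the resulting geometric series, with the case $k\geqslant 2$, $p\geqslant 3$ handled by exactly the same tightness computation. The only difference is that you supply the subgroup-count bound directly from the classical rank-two enumeration instead of citing it, which is harmless.
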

\begin{proof}
	By Theorem 3.3 of~\cite{subgroup-count}, the number of subgroups $H\leqslant G$ with $|H| = p^i$ is at most $p^i(1-1/p)^{-1}\leqslant 2p^i$, so we easily compute
	\[\sum_{H\in T_i}\frac{1}{|H|^k}\leqslant \frac{2p^i}{p^{ik}}=2p^{i(1-k)}.\]
	
	Directly from this estimate we obtain
	\[\sum_{\1\neq H\leqslant G}\frac{1}{|H|^k}\leqslant 2\sum_{i=1}^n p^{i(1-k)}.\]
	If $k\geqslant 2$ then we can bound this by an infinite sum and apply the geometric series formula to obtain
	\[\sum_{\1\neq H\leqslant G}\frac{1}{|H|^k}\leqslant \frac{2}{p^{k-1} - 1}\leqslant 3p^{1-k},\]
	where the last inequality uses $p\geqslant 3$. If $k=1$ then $p^{i(k-1)}=1$ for all $i$, so we have simply
	\[\sum_{\1\neq H\leqslant G}\frac{1}{|H|^k}\leqslant 2n.\]
\end{proof}

Now let $n\in\N$, and consider the sequence $b_{n,0},b_{n,1},...$ defined by $b_{n,0} = 1$ and
\[b_{n,i+1} = \frac{e^{-90b_{n,i}^{-(n+1)}}}{2}b_{n,i}.\]
Clearly this sequence is well-defined, monotonically decreasing, and remains strictly positive forever.

Now let
\[c_n = \frac{e^{-30(2n+1)b_{n,n}^{-(n+1)}}}{2}b_{n,n};\]
again this is well-defined and $0<c_n<b_{n,n}$.

Also note that the function
\[x\mapsto \frac{(b_{n,n}x)^{1/4}}{5\log(2x^{3n+3/4})}=\Omega(x^{1/5})\]
is unbounded as $x\to\infty$, so there is some $d_n$ such that for all $x>d_n$ we have
\[b_{n,n}x > \left(5\log(2x^{3n+3/4})e^{30(2n+1)b_{n,n}^{-(n+1)}}\right)^4.\]

\begin{thm}\label{thm-ranktwo}
	Let $G$ be a rank two Abelian $p$-group with $|G|=p^n$. If
	\[p > \max\{2c_n^{-n},d_n\},\]
	then $G$ has a tight pair.
\end{thm}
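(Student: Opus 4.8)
The plan is to build a $\Gal(\C/\R)$-invariant $2$-clustered diagram on $G$ by iterating the random construction $D\mapsto D^T$ enough times to create a weak generating scheme with the clusteredness needed to verify sub-inductor axiom~\ref{axiom-cover}, and then to invoke \cref{tight construction}. Concretely, start from the trivial diagram $D_0$ with $D_0(H) = I_{\1}^H(1)\setminus\{1_H\}$ for $H\neq \1$ (or something equally coarse that is automatically $C$-clustered for $C$ close to $p$); since $G$ has rank two and $p$ is enormous, $D_0$ is $C$-clustered for $C$ roughly $p^{1-1/n}$. Then run $n$ rounds of the $D\mapsto D^T$ construction, one for each "level" $T = T_i$ (subgroups of order $p^i$), using \cref{inductive lemma} to track the decay of the clusteredness constant. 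The key bookkeeping is that the divisor sum $\alpha = \sum_{H\in T_i} 1/|H|$ is bounded by $2$ (from \cref{divisor sum}, taking $k=1$ restricted to a single level, where it gives $\le 2p^{i(1-1)} = 2$), so each round multiplies the constant by a factor $\rho/2$ with $\rho = e^{-15(\alpha+1)\beta\gamma}$ controlled uniformly; this is exactly why the recursively defined sequence $b_{n,i}$ above models the surviving constant after $i$ rounds (the $90 = 15\cdot(2+1)\cdot 2$ in the recursion comes from $\alpha\le 2$, $\beta\approx 1$, $\gamma$ bounded).

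The steps, in order: (1) Fix the starting diagram $D_0$ and check it is $\Gal(\C/\R)$-invariant and $2$-clustered — indeed $C_0$-clustered with $C_0 \approx p$ — using that restriction/induction for Abelian groups is transparent and that $|I_K^H(\chi)| = [H:K]$. (2) Inductively define $D_i = D_{i-1}^{T_i}$ where $T_i$ is the set of order-$p^i$ subgroups; verify the hypotheses of \cref{inductive lemma} hold at each stage, namely that $D_{i-1}$ is $C$-clustered with $C > \max\{p^{1-1/n}, 10^4\}$ and that the probabilistic side conditions ($p$ large relative to $c_n^{-n}$, $d_n$) guarantee the success probability is positive — this is precisely where the definitions of $c_n$ and $d_n$ were engineered, so the verification is a matching-up exercise. (3) After $n$ rounds we land at a diagram $D_n$ that is $c_n$-clustered (the extra round with $\alpha \le 2n$ accounting for the cumulative $T = \bigcup T_i$, hence the $30(2n+1)$ exponent in the definition of $c_n$); in particular $c_n > 10^4 > 2$, so $D_n$ is $2$-clustered. (4) Set $T = \Sub(G)\setminus\{\1\}$ and $A$ equal to $D_n$ with $T(H) := D_n^T(H)\setminus D_n(H) = \{\tau(H),\overline{\tau(H)}\}$ — by \cref{inductive props} parts (2)--(3) this pair $(A,T)$ is a weak generating scheme (antichain property from (3), nonemptiness and $\Gal$-invariance from (1)--(2)). (5) $C$-clusteredness of $D_n^T$ with $C \ge 2$ says exactly $|J[D_n^T]_K^H(\chi)| \ge 2^{\operatorname{rk}(K,H)} \ge 1 > 0$ for all $(\chi,K,H)$, which is axiom~\ref{axiom-cover} in the reformulated "$J_K^H(\chi)\neq\emptyset$" form noted after \cref{tight construction}; so \cref{tight construction} applies and yields a tight pair $(D,J)$ on $G$.

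The main obstacle is step (2)/(3): propagating the clusteredness bound correctly through the iteration and confirming that the side hypotheses of \cref{inductive lemma} — in particular the requirement $C > p^{1-1/n}$, the lower bound on $p$ in terms of $\rho, C, \gamma$, and the "$C > (5\log(2p^{3n+3/4})/\rho)^4$" threshold for nonzero probability — all remain satisfied at \emph{every} level $i = 1,\dots,n$ simultaneously, uniformly in $p$ once $p$ exceeds the stated threshold. The sequence $b_{n,i}$ decays but stays bounded below by a positive constant depending only on $n$, so the surviving constant after finitely many rounds is $\Theta_n(1)$; the delicate point is that this constant, call it $c_n$, must still beat $p^{1-1/n}$ — which it does \emph{not} in general — so one has to be careful that the hypothesis "$C > p^{1-1/n}$" in \cref{inductive lemma} is only needed at the \emph{input} of each round where $C$ is still comparable to $p$ for the early rounds, and that $\beta = p/C$, $\gamma = (\beta^{-n} - 1/p)^{-1}$ stay $O_n(1)$ throughout. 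Threading this correctly, and absorbing the union bound over the $O(p^{3n})$-many triples in $S_G$ into the $d_n$ threshold, is the crux; once it is done the invocation of \cref{tight construction} and \cref{cormain} is automatic.
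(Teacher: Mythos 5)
Your overall architecture is the paper's: seed a trivially clustered diagram, run the random construction level by level using \cref{inductive lemma} with the level sums $\alpha_i\leqslant 2$ (and $\leqslant 2n$ for the final global round), extract a weak generating scheme, and finish with \cref{tight construction}. But step (4) reverses the roles of $A$ and $T$, and that reversal breaks the argument. Axiom (2) of a weak generating scheme demands $T(K)\cap R_K^H(A(H)\cup T(H))=\emptyset$ for $\1\neq K<H$, and the only tool available, claim (3) of \cref{inductive props}, says that \emph{newly} sampled characters at $H$ restrict away from the \emph{old} diagram at $K$; it never says a newly sampled $\tau(K)$ avoids the restrictions of characters chosen earlier (or simultaneously) at larger subgroups. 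With your assignment ($T(H)$ = the last-round pair, $A=D_n$), the fresh $\tau_{n+1}(K)\in J[D_n]_\1^K(1)\setminus\{1_K\}$ may perfectly well equal $\tau_i(H)|_K$ or $\tau_{n+1}(H)|_K$ --- such collisions are exactly the events that \cref{inductive lemma} bounds but does not exclude --- so neither $T(K)\cap R_K^H(A(H))=\emptyset$ nor $T(K)\cap R_K^H(T(H))=\emptyset$ is available. The correct choice is the opposite one: $T=D_n$ (the pair added to each $H$ at its own level $i=\log_p|H|$) and $A=D_{n+1}\setminus D_n$ (the final round); then axiom (2) follows from claim (3) applied at round $i$, where $T(K)=D_{i-1}(K)$, and at round $n+1$, where $T(K)\subseteq D_n(K)$. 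Indeed your own justification ``antichain property from (3)'' is the justification for this swapped assignment, not for the one you wrote; and the asymmetry matters downstream, since in \cref{tight construction} it is $T$ that witnesses constraint (2) of a tight pair and $A$ that witnesses constraint (3).

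The clusteredness bookkeeping, which you flag as the crux and leave unresolved, is also misstated. The surviving constants are $C_i\geqslant b_{n,i}\,p$ and $C_{n+1}\geqslant c_n p$ --- proportional to $p$, not $\Theta_n(1)$ --- and $c_n<b_{n,n}\leqslant 1$, so ``$c_n>10^4$'' is false; the true statements are $c_np>10^4$ and, crucially, $c_n p\geqslant 2^{1/n}p^{1-1/n}>p^{1-1/n}$, which holds at \emph{every} round (not just early ones) precisely because of the hypothesis $p>2c_n^{-n}$, with $n\geqslant 2$ giving the $10^4$ bound; the hypothesis $p>d_n$ then supplies the threshold $C_i>\bigl(5\log(2p^{3n+3/4})/\rho_i\bigr)^4$ for positive success probability (the union bound over the $\leqslant p^{3n}$ triples is already internal to \cref{inductive lemma}, not something to absorb into $d_n$ separately). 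Two smaller points: your proposed seed $D_0(H)=I_\1^H(1)\setminus\{1_H\}=\widehat{H}\setminus\{1_H\}$ is the opposite of ``coarse'' in the relevant sense --- $J[D_0]_K^H(\chi)$ is then empty or $\{1_H\}$, so $D_0$ is not even $1$-clustered; the seed must be the \emph{empty} diagram, for which $J[\emptyset]=I$ gives $C_0=p$. Finally, the case $n=1$ must be routed through \cref{cyclic tight pair}, since the estimate forcing $C_i>10^4$ uses $n\geqslant 2$.
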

\begin{proof}
	If $n=1$ this follows from \cref{cyclic tight pair}, so we assume $n\geqslant2$. For $0<i\leqslant n$ let $T_i$ be the set of subgroups $H\leqslant G$ such that $|H|=p^i$, and let $T_{n+1}=\Sub(G)\setminus\{\1\}$. For $0\leqslant i\leqslant n$ let
	\[\alpha_i = \sum_{H\in T_{i+1}}\frac{1}{|H|},\]
	and let $C_0,C_1,...,C_{n+1}$ be the sequence defined by $C_0 = p$ and
	\[C_{i+1} = \frac{\rho_i}{2}C_i,\]
	where
	\[\rho_i = e^{-15(\alpha_i+1)\beta_i\gamma_i},\]
	with $\beta_i = p/C_i$ and $\gamma_i = (\beta_i^{-n}-1/p)^{-1}$.
	
	By \cref{divisor sum}, $\alpha_i\leqslant 2$ for $i<n$ and $\alpha_n\leqslant 2n$. Let $i\leqslant n$ and suppose by induction that
	\[C_i \geqslant b_{n,i}p.\]
	Then by definition,
	\[\beta_i \leqslant b_{n,i}^{-1}.\]
	Since $p > 2c_n^{-n}\geqslant 2b_{n,i}^{-n}$ for $i\leqslant n$, we also have
	\[\gamma_i \leqslant (b_{n,i}^n - \frac{1}{2}b_{n,i}^{n})^{-1} = 2b_{n,i}^{-n}.\]
	Thus if $i<n$ we have
	\[\rho_i\geqslant e^{-15\cdot 3\cdot b_{n,i}^{-1}\cdot 2b_{n,i}^{-n}} = e^{-90b_{n,i}^{-(n+1)}},\]
	and if $i=n$ we have
	\[\rho_i\geqslant e^{-15\cdot (2n+1)\cdot b_{n,n}^{-1}\cdot 2b_{n,n}^{-n}} = e^{-30(2n+1)b_{n,n}^{-(n+1)}}.\]
	By the induction hypothesis and definition of $b_{n,i+1}$, this implies for $i<n$ we have
	\[C_{i+1} \geqslant b_{n,i+1}p,\]
	and
	\[C_{n+1} \geqslant c_n p.\]
	Note together with the bound on $p$ this also implies for all $i$ that
	\[C_i \geqslant C_{n+1}\geqslant p(p/2)^{-1/n}>p^{1-1/n},\]
	and hence also
	\[C_i> (2c_n^{-n})^{1-1/n}\geqslant (2b_{n,1}^{-n})^{1-1/n}\geqslant e^{90(n-1)}>10^4\]
	since $n\geqslant2$. Furthermore, since $p> d_n$, for $i\leqslant n$ we have
	\[C_i\geqslant b_{n,n}p>\left(5\log(2p^{3n+3/4})e^{30(2n+1)b_{n,n}^{-(n+1)}}\right)^4\geqslant \left(5\log(2p^{3n+3/4})/\rho_i\right)^4.\]
	
	Now let $D_0$ be the empty diagram, which is clearly $\Gal(\C/\R)$-invariant and $p$-clustered. Suppose by induction that $D_i$ is $\Gal(\C/\R)$-invariant and $C_i$-clustered. Then sample $D_i^{T_{i+1}}$ repeatedly until we obtain a diagram which is $C_{i+1}$-clustered, and let $D_{i+1}$ be the resulting diagram. By our estimates above, this is always possible. By \cref{inductive props}, $D_{i+1}$ is again $\Gal(\C/\R)$-invariant, so we can continue this process until we've defined $D_0,...,D_{n+1}$.
	
	Let $A = D_{n+1}\setminus D_n$ and $T = D_n$. We claim that $(A,T)$ is a weak generating scheme and $J[A\cup T]$ satisfies sub-inductor axiom~\ref{axiom-cover}. By \cref{tight construction}, this is sufficient to prove the existence of a tight pair.
	
	For all $i\leqslant n+1$ we have
	\[C_i\geqslant C_{n+1}\geqslant c_np>0,\]
	so $D_{n+1}$ is $1$-clustered, which immediately tells us that $J[A\cup T] = J[D_{n+1}]$ satisfies sub-inductor axiom~\ref{axiom-cover}. By claim~(2) of \cref{inductive props} we have $A(H)\neq\emptyset$ for all $H\neq\1$, and by claim~(3) with $K=H$ we have
	\[A(H)\cap T(H)=\emptyset.\]
	Also by claim~(2), for all $H\neq\1$ with $|H|=p^i$ we have
	\[T(H) = D_n(H)\supseteq D_i(H)\setminus D_{i-1}(H)\neq\emptyset.\]
	By claim~(1), $T$ and $A$ are both $\Gal(\C/\R)$-invariant and
	\[1_H\notin T(H)\cup A(H)\]
	for all $H$. Finally, by claim~(3) for all $\1\neq K<H$ we have
	\[R_K^H(A(H))\cap T(K)=\emptyset\]
	and
	\[R_K^H(T(H))\cap T(K) = R_K^H(D_i(H)\setminus D_{i-1}(H))\cap D_{i-1}(H)=\emptyset\]
	where $|H|=p^i$. Thus $(A,T)$ is a weak generating scheme.
\end{proof}

We now begin working towards the proof of \cref{inductive lemma}, but we first need to prove two sublemmas.

The following technical lemma is similar in spirit to the classical coupon collector's problem. In the standard coupon collector's problem one has a set $|X|=n$ of ``coupon types'' and a sequence of independent uniformly distributed coupons $x_1,x_2,...\in X$, and one asks how many samples are required to collect at least one of each coupon type. Slightly more generally, one can ask how many samples are needed to collect a fixed fraction of all the coupon types, i.e. given $\eps>0$ what is the minimal $k$ for which
\[|\{x_1,...,x_k\}| = (1-\eps)n.\]
One can show the expected number of samples needed is
\[\sum_{i=0}^{(1-\eps)n-1} \frac{n}{n-i}\approx n\ln(1/\eps),\]
and the exact number of samples needed is concentrated around the mean when $n$ is large. Thus since $\ln(1/\eps)$ diverges as $\eps$ approaches zero, if we make $k = O(n)$ samples then with high probability we can guarantee
\[|X\setminus \{x_1,...,x_k\}|\geqslant\eps|X|\]
for some $\eps>0$ depending only on $k/n$.

\begin{lem}\label{lem-coupon}
	Let $X$ be a set with
	\[n=|X|\geqslant 10^4,\]
	and let $P_1,P_2,...,P_m$ be a sequence of partitions of $X$ such that each block in every $P_i$ has size at most $2$, and suppose for all $i,j\leqslant m$, either $P_i = P_j$ or for all $(B,B')\in  P_i\times P_j$ we have $|B\cap B'|\leqslant 1$. For each $i$ suppose we're given some $A_i\in P_i\amalg\{\star\}$, and suppose for each $i$ we sample a block $b_i\in P_i\setminus \{A_i\}$ independently and uniformly at random.
	
	Then
	\[\Pr\left(|X\setminus \bigcup_{i=1}^m b_i|\leqslant \frac{e^{-5m/n}}{2}|X|\right)\leqslant n^{3/4}e^{-\frac{e^{-5m/n}}{5}n^{1/4}}.\]
\end{lem}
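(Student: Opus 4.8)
The plan is to control the random set $X \setminus \bigcup_{i=1}^m b_i$ by a martingale/concentration argument after first establishing the right expectation bound. First I would reduce to a cleaner model: since the constraint on the partitions says any two distinct $P_i, P_j$ share at most a single element in any pair of blocks, the ``bad events'' in which a given element $x \in X$ fails to be covered are negatively correlated enough that one can estimate the probability that $x$ survives all $m$ samples. Fix $x \in X$; at step $i$, $x$ lies in some block $B_x^{(i)} \in P_i$ of size $1$ or $2$, and the chosen block $b_i$ is uniform on $P_i \setminus \{A_i\}$, which has at least $n/2 - 1$ elements. So the probability that $b_i = B_x^{(i)}$ is either $0$ (if $B_x^{(i)} = A_i$) or at least $1/(n/2) = 2/n$. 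Hence $\Pr(x \text{ survives all } m) \leqslant (1 - 2/n)^m \leqslant e^{-2m/n}$. Therefore $\mathbb{E}\,|X \setminus \bigcup b_i| \leqslant n e^{-2m/n}$, which is comfortably below $\tfrac{e^{-5m/n}}{2} n$ (the factor $e^{-2m/n}$ versus $\tfrac12 e^{-5m/n}$ gives slack $2 e^{3m/n} \geqslant 2$). So the event in question is a lower-tail deviation of the survival count below roughly a constant multiple of its mean — actually we need an \emph{upper} tail: we want $|X \setminus \bigcup b_i|$ to \emph{not} be too large, so this is an upper-tail large-deviation estimate for the number of surviving elements.

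The main work is the concentration step. I would expose the randomness one sample at a time: let $Z = |X \setminus \bigcup_{i=1}^m b_i|$ and let $Z_k = \mathbb{E}[Z \mid b_1, \dots, b_k]$ be the Doob martingale. Changing a single $b_k$ changes the covered set by at most the two elements of the old block and the two elements of the new block, so $|Z_k - Z_{k-1}| \leqslant 4$, a bounded-differences situation. Azuma--Hoeffding then gives $\Pr(Z \geqslant \mathbb{E}Z + t) \leqslant \exp(-t^2/(32m))$. However, this will be too weak when $m$ is large compared to $n$: we want a bound like $n^{3/4} e^{-c n^{1/4}}$, and a pure Azuma bound in terms of $m$ does not obviously deliver the $n^{1/4}$ in the exponent independent of $m$. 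The fix is to instead use a self-bounding / Talagrand-type inequality, or better, to observe that $Z$ is a function of the $b_i$ with the property that it is $2$-Lipschitz in a certificate sense: if $Z = s$ then there is a set of at most $s$ coordinates (one witnessing sample for each surviving element — in fact for each surviving $x$, \emph{no} coordinate covers it, so the certificate for ``$x$ survives'' is empty, meaning $Z$ is actually monotone and has small certificates for the event $\{Z \geqslant s\}$ of size... ). More carefully: the event $\{Z \geqslant s\}$ is certified by choosing $s$ surviving elements; but ``$x$ survives'' is not certified by a bounded set of $b_i$'s. The correct route is Talagrand's inequality in the form for the upper tail of ``number of uncovered items'' in a coupon-collector-type process, or equivalently to use McDiarmid's inequality for functions with bounded differences combined with the observation that we only need a tail bound at scale $t = \tfrac12 \mathbb{E}' n$ where $\mathbb{E}' = e^{-5m/n}$. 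I would set $t = \tfrac{e^{-5m/n}}{2}n - \mathbb{E}Z + (\text{positive slack})$; since $\mathbb{E}Z \leqslant n e^{-2m/n}$ this is delicate unless $m/n$ is bounded — and indeed when $m/n$ is large, $\mathbb{E}Z$ is exponentially small and even a single uncovered element would violate the target, so in that regime one argues directly: $\Pr(Z \geqslant 1) \leqslant \mathbb{E}Z \leqslant n e^{-2m/n}$ by Markov, and this beats $n^{3/4} e^{-\frac{e^{-5m/n}}{5} n^{1/4}}$ precisely when $m/n \gtrsim \log n$.

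So the proof splits into two regimes. \textbf{Regime 1: $m \leqslant c_0 n \log n$} for a suitable absolute constant $c_0$. Here $e^{-5m/n} n \geqslant n^{1-5c_0 \log n / \log n}$-type bound is still polynomial-ish — more simply, $e^{-5m/n}n/2 \geqslant \mathbb{E}Z + t$ with $t$ a constant fraction of $e^{-5m/n} n$, so we want $\Pr(Z \geqslant \mathbb{E}Z + t)$ with $t \asymp e^{-5m/n}n$. Apply a Chernoff-type bound for the sum $Z = \sum_{x \in X} \mathbf{1}[x \text{ survives}]$: although the indicators are not independent, they are negatively associated (covering one element by $b_i$ can only help cover others, in the appropriate monotone sense), so the standard upper-tail Chernoff bound $\Pr(Z \geqslant a) \leqslant e^{-a}(\,e\,\mathbb{E}Z/a)^{a}$ applies with $a = \tfrac{e^{-5m/n}}{2}n$; since $\mathbb{E}Z \leqslant n e^{-2m/n} = e^{-3m/n}\cdot n e^{m/n}$... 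I would instead just use $\Pr(Z \geqslant a) \leqslant (e\,\mathbb{E}Z / a)^{a}$ with $\mathbb{E}Z \leqslant n e^{-2m/n}$ and $a = \tfrac12 e^{-5m/n}n$, giving $(2e\, e^{3m/n})^{-a}$ — wait, that ratio exceeds $1$. The cleaner fact is that we should bound the survival probability of $x$ by $e^{-2m/n}$ but compare against the threshold $\tfrac12 e^{-5m/n}$, and the gap $e^{3m/n}$ works \emph{against} us, so actually one must be more careful and only claim $\Pr(x \text{ survives}) \leqslant e^{-cm/n}$ for the right $c$; re-examining, the block size bound gives survival probability per step $\leqslant 1 - 1/(n/2-1) \approx 1 - 2/n$, so per-element survival $\leqslant e^{-2m/(n-2)} \leqslant e^{-2m/n}$, hence $\mathbb{E} Z \le n e^{-2m/n}$, and since $e^{-2m/n} \le \tfrac{1}{4} e^{-5m/n}$ is \emph{false}, the deviation $a - \mathbb{E}Z$ is actually negative — meaning the event $\{Z \le \tfrac12 e^{-5m/n} n\}$ has probability close to $1$, not small! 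I realize the inequality direction: we want $Z$ \textbf{small} (few uncovered), and $\tfrac12 e^{-5m/n}n < e^{-2m/n}n = $ upper bound on $\mathbb{E}Z$, so $\{Z \le \tfrac12 e^{-5m/n}n\}$ is a \emph{lower}-tail event and \emph{is} rare — that is exactly the statement. \textbf{Conclusion of the plan:} use the negative-association lower-tail Chernoff bound $\Pr(Z \le (1-\delta)\mathbb{E}Z) \le e^{-\delta^2 \mathbb{E}Z/2}$; but $\mathbb{E}Z$ can be tiny, so instead use the sharper two-sided form and the $n^{1/4}$ scaling comes from $\mathbb{E}Z \geqslant$ (a lower bound: survival probability per step is \emph{at most} something, but for a \emph{lower} bound on $\mathbb{E}Z$ we need survival probability $\geqslant$ something, e.g. $(1 - 2/(n/2-1))^m$ is not a lower bound either). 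The actual mechanism producing $n^{1/4}$ must be: partition $X$ into $n^{3/4}$ groups each of size $n^{1/4}$ chosen so that within each group the relevant events are independent (possible because each $P_i$ has blocks of size $\leq 2$, so a greedy colouring argument makes blocks hitting a group pairwise-``independent enough''), apply an independent-case bound within each group to get failure probability $\leqslant e^{-\Omega(e^{-5m/n} n^{1/4})}$ per group, and union-bound over the $n^{3/4}$ groups. That is the route I expect the author takes and it is the step I expect to be the main obstacle: extracting genuine independence (or negative association) at the $n^{1/4}$ scale from the weak pairwise-intersection hypothesis on the partitions, and bookkeeping the constant $5$ versus the $1/5$ and $3/4$ exponents so that the final bound reads exactly $n^{3/4} e^{-\frac{e^{-5m/n}}{5} n^{1/4}}$. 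Everything else — the per-step survival estimate, Markov's inequality in the degenerate large-$m$ regime, and the union bound — is routine.
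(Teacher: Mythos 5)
There is a genuine gap. After several detours (Azuma/McDiarmid, Talagrand, an upper-tail Chernoff with negative association, and an initial confusion about which tail is at stake), you do eventually land on the correct high-level architecture — split $X$ into roughly $n^{3/4}$ groups of size roughly $n^{1/4}$, get independence inside each group, apply a Chernoff lower-tail bound per group, and union-bound — which is indeed what the paper does. But you explicitly defer the one step that constitutes the actual proof: manufacturing independence of the survival events inside a group from the hypothesis that distinct partitions share blocks in at most one point. This is not a routine ``greedy colouring'' matter. The paper builds a graph only from the $\min\{\lfloor n^{3/4}\rfloor,k\}$ most frequent partition classes containing a $2$-block, applies the Hajnal--Szemer\'edi theorem to get an \emph{equitable} colouring (greedy colouring does not give colour classes of size $\Omega(n^{1/4})$, which is needed for the per-group exponent), and for the remaining infrequent partitions it cannot avoid two group members sharing a block; instead it bounds the number of such co-occurrences per element by $\tfrac{m}{n}n^{1/2}$ using the monotone ordering of class sizes. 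Independence is then obtained not by any correlation inequality but by a coupling: each $b_i$ is generated through auxiliary independent Bernoulli coins of parameter $3/n$ (singleton intersections with the group) and $\sqrt{3/n}$ (two-point intersections), arranged so that the coin counts $v(x)$ dominate the coverage counts while $b_i$ remains uniform; the constant $5$ in $e^{-5m/n}$ comes precisely from $3+\sqrt 3$ plus the error term in $(1+x/n)^n\geqslant e^{x-(2-\sqrt3)/2}$. None of this machinery appears in your proposal, and it is exactly the content of the lemma.

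A second, related gap: since the event is a lower-tail event for the number of uncovered elements, what you need is a \emph{lower} bound on each element's survival probability together with joint concentration; your quantitative work (per-step coverage probability $\approx 2/n$, hence survival $\leqslant e^{-2m/n}$, hence $\mathbb{E}Z\leqslant ne^{-2m/n}$) is all on the upper-bound side and cannot drive a lower-tail estimate. You also never produce a mechanism yielding the specific threshold $e^{-5m/n}$ rather than $e^{-2m/n}$ — in the paper that loss is the price paid for the independent-coin coupling, so it is inseparable from the missing step. Finally, your proposed ``Regime 2 via Markov'' bounds $\Pr(Z\geqslant 1)$, the wrong event; no regime split is needed, because for $m/n\gtrsim\log n$ the stated right-hand side $n^{3/4}e^{-e^{-5m/n}n^{1/4}/5}$ exceeds $1$ and the claim is vacuous. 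In short, the plan sketches the paper's skeleton but supplies neither the equitable-colouring construction nor the domination coupling, which are the heart of the argument.
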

\begin{proof}
	Our proof of this lemma is closely based on a math.stackexchange answer by Misha Lavrov~\cite{lavrov}. We suggest the reader should read Lavrov's argument in its simpler original context before attempting to understand the proof given below. Essentially all of the new arguments below are just there to ensure that the coins which land heads with probability $\sqrt{3/n}$ only get flipped $O(\sqrt{n})$ times; this fact is obvious in the original context but requires some significant work in our context.
	
	Clearly the ordering of the $P_i$ has no impact on the claim of the lemma, so we can freely reorder the $P_i$. We suppose $P_1,...,P_{m'}$ each contain at least one block of size exactly two, and $P_{m'+1},...,P_m$ are discrete. We can also assume without loss of generality that for some sequence
	\[0=i_0<i_1<i_2<\cdots<i_k=m',\]
	we have $P_i = P_j$ if and only if $i_{\ell-1}< i,j\leqslant i_{\ell}$ for some $\ell\leqslant k$, and furthermore we can assume $i_\ell-i_{\ell-1}$ is non-increasing with $\ell$.
	
	Let 
	\[s = \min\{\lfloor n^{3/4}\rfloor,k\},\]
	and let $G = (X,E)$ be the undirected graph on $X$ where $(x,y)\in E$ if and only if $\{x,y\}\in P_i$ for some $i\leqslant i_s$. Clearly $\deg_G(x)\leqslant s$ for all $x\in X$. Thus by the Hajnal–Szemer\'edi theorem~\cite{equitable}, we can find an \emph{equitable coloring} of $G$ using at most $s+1$ colors, by which we mean a vertex coloring such that the size of any two colored components differs by at most one. Note that either $s+1\leqslant n^{3/4}$ or $s = \lfloor n^{3/4}\rfloor$, so in either case we have
	\[(n^{1/4}-1)(s+1)\leqslant n.\]
	Thus each colored component must have size at least $\lfloor n^{1/4}-1\rfloor$. Furthermore, note that if $s=k$ then $(x,y)\in E$ if and only if $\{x,y\}\in P_i$ for some arbitrary $i\leqslant m$, so in this case if $x,y$ share the same color then $\{x,y\}\notin P_i$ for all $i\leqslant m$. On the other hand, if $s\neq k$ then $s+1>n^{3/4}$ implies
	\[n^{1/4}(s+1)>n\]
	and hence each colored component has size at most $\lceil n^{1/4}\rceil$.
	
	Let $S\subseteq X$ be a colored component and $x\in S$. If $s=k$ then for $y\neq x$ we have $(x,y)\in E$ if and only if $\{x,y\}\in P_i$ for some arbitrary $i\leqslant m$, so we have
	\[\sum_{y\in S\setminus\{x\}}\#\{P_i:\{x,y\}\in P_i\} = 0<\frac{m}{n}n^{1/2}.\]
	On the other hand, if $s\neq k$ then we must have $s+1>n^{3/4}$. For $y\neq x$ there is at most one $t$ such that $\{x,y\}\in P_{i_t}$, and if $y\in S$ then $t>s$. Since $i_\ell-i_{\ell-1}$ is non-increasing, for all $t>s$ we have
	\[i_t - i_{t-1}\leqslant \frac{1}{s+1}\sum_{\ell=1}^{s+1} i_\ell-i_{\ell-1} = \frac{i_{s+1}}{s+1}\leqslant \frac{m}{s+1}<\frac{m}{n}n^{1/4},\]
	and hence in this case we also have
	\[\sum_{y\in S\setminus\{x\}}\#\{P_i:\{x,y\}\in P_i\} <(\lceil n^{1/4}\rceil-1)\frac{m}{n}n^{1/4}<\frac{m}{n}n^{1/2}.\]
	
	Now let $P$ be the partition of $X$ into colored components, and let $P'$ be a refinement of $P$ such that every block $B\in P'$ has size
	\[n^{1/4}-2\leqslant |B|\leqslant \frac{n}{6}.\]
	Existence of such a refinement is guaranteed by the fact that
	\[\frac{n}{6}\geqslant 2(n^{1/4}-2).\]
	
	Now fix some $S\in P'$, and let $\ell = |S|$. For each $x\in S$, let $v(x)$ be a counter initialized to zero. For each $i$, choose some ordering $P_i\setminus \{A_i\} = \{B_1,...,B_{k_i}\}$, such that every block $B$ with $B\cap S\neq\emptyset$ comes before every block $B$ with $B\cap S=\emptyset$. Note we have
	\[\frac{n}{2}-1\leqslant k_i\leqslant n.\]
	
	Consider the following alternative mechanism for generating $b_i$. Initialize $j=1$ and iterate the following procedure until $j>k_i$:
	\begin{enumerate}
		\item If we've already chosen $b_i$, let $p=0$; otherwise set $p_j = \frac{1}{k_i+1-j}$.
		\item If $B_j\cap S = \{x\}$, then increment $v(x)$ with probability $3/n$. If $|B_j\cap S| = 2$, then independently for each $x\in B_j\cap S$ increment $v(x)$ with probability $\sqrt{3/n}$.
		\item If $B_j\cap S\neq\emptyset$ and in the previous step we incremented $v(x)$ for all $x\in B_j\cap S$, set $b_i = B_j$ with probability $p_jn/3$. If $B_j\cap S=\emptyset$, set $b_i = B_j$ with probability $p_j$.
		\item Increment $j$.
	\end{enumerate}
	The number of blocks $B\in P_i$ such that $B\cap S\neq\emptyset$ is at most $\ell\leqslant \frac{n}{6}$, so in step $3$ if $B_j\cap S\neq\emptyset$ then we always have
	\[\frac{p_jn}{3} \leqslant \frac{n/3}{n/2-1+1-n/6}=1\]
	and hence $p_jn/3$ is indeed a well-defined probability.
	
	Now from the above procedure one can see
	\[\Pr(b_i = B_j|\text{$b_i$ not chosen before step $j$}) = p_j,\]
	so
	\[\Pr(b_i = B_j) = p_j\prod_{j'=1}^{j-1}(1-p_{j'}) = \frac{1}{k_i}.\]
	Thus $b_i$ sampled according to this procedure is uniformly distributed in $P_i\setminus \{A_i\}$. On the other hand, from the construction one can see that for all $x\in S$ the number of $i$ such that $x\in b_i$ is at most $v(x)$. In particular,
	\[Y\cap S\supseteq \{x\in S:v(x)=0\}.\]
	
	Now for each $x\in S$ and each $i\leqslant m$, there is at most one $j$ for which $v(x)$ can increment in the above process, namely the $j$ corresponding to the unique block $B\in P_i$ containing $x$. Furthermore, if $|B|=2$ then $v(x)$ can only increment if $B = \{x,y\}$ with $y\in S$. By our previous argument we know
	\[\sum_{y\in S\setminus\{x\}}\#\{P_i:\{x,y\}\in P_i\} < \frac{m}{n}n^{1/2}.\]
	Thus $v(x)$ is the sum of at most $m$ Bernoulli random variables with parameter $3/n$ and at most $\frac{m}{n}n^{1/2}$ Bernoulli random variables with parameter $\sqrt{3/n}$. Thus
	\[\Pr(v(x) = 0) \geqslant \left(\left(1-\frac{3}{n}\right)^n\left(1-\frac{\sqrt{3}}{n^{1/2}}\right)^{n^{1/2}}\right)^{m/n}\approx e^{-\frac{m}{n}(3+\sqrt{3})}.\]
	More precisely, using that
	\[\left(1+\frac{x}{n}\right)^n\geqslant e^{x-(2-\sqrt{3})/2}\]
	for $|x|\leqslant 3$ when $n\geqslant 36$, we have
	\[\Pr(v(x) = 0)\geqslant e^{-\frac{m}{n}(3+\sqrt{3})}e^{-\frac{m}{n}(2-\sqrt{3})} = e^{-5m/n}.\]
	
	Since $v(x)$ and $v(y)$ are independent for $x\neq y$, this shows
	\[\#\{x\in S:v(x)=0\}\]
	is bounded below by a binomial random variable with $\ell$ trials and success probability $e^{-5m/n}$. Thus by a standard Chernoff bound we have
	\[\Pr(|Y\cap S|\leqslant e^{-5m/n}|S|/2)\leqslant e^{-\frac{e^{-5m/n}}{4}|S|}\leqslant e^{-\frac{e^{-5m/n}}{5}n^{1/4}},\]
	where in the last inequality we use $n\geqslant 10^4$ to ensure $(n^{1/4}-2)/4\geqslant n^{1/4}/5$.
	
	Repeating the above argument for all $S\in P'$ and applying the union bound now gives the claim of the lemma.
\end{proof}

\begin{lem}\label{lem-part}
	Let $H$ be an Abelian group of odd order and $K\lessdot H$, and let $\chi\in\widehat{K}$. Let $X = I_K^L(\chi)$, and for each $L\in (K,H]$ let $P_L$ be the partition of $X$ defined by $\tau\equiv\tau'\mod P_L$ if and only if
	\[\tau|_L \equiv \tau'|_L\mod\Gal(\C/\R)\]
	Let $S_\chi\subseteq (K,H]$ be the subset of $L$ for which $K\wedge L\leqslant\ker\chi$.
	
	Then
	\begin{enumerate}
		\item If $M\leqslant L$ then $P_L\leqslant P_M$.
		\item If $M\leqslant L$ and $L\in S_\chi$ then $P_L = P_M$.
		\item For all $L\in (K,H]$ every block in $P_L$ has cardinality at most two.
		\item If $L\in(K,H]\setminus S_\chi$, then $P_L$ is the discrete partition of $X$.
		\item If $L\in S_\chi$, then $P_L$ has a unique singleton block $\{1_L^X\}$ and every other block has cardinality two.
		\item If $M,L\in (K,H]$ and $P_M\neq P_L$, then for every pair of blocks $(B,B')\in P_M\times P_L$ we have $|B\cap B'|\leqslant 1$.
	\end{enumerate}
\end{lem}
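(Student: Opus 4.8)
The plan is to introduce coordinates that reduce all six claims to elementary arithmetic in the cyclic group $A:=\widehat{H/K}$, regarded as the subgroup of $\widehat H$ consisting of characters trivial on $K$. (I read the ambient set as $X=I_K^H(\chi)$.) Since $H$ is Abelian and $K\lessdot H$, the simple quotient $H/K$ is cyclic of some prime order $p$, and $p$ is odd because $|H|$ is; hence $|A|=p$, and $X$ is a single coset of $A$ in $\widehat H$. Fix $\tau_0\in X$ and parametrize $X$ by $A$ via $\tau_0 a\leftrightarrow a$.

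First I would collect the structural facts for a fixed $L\in(K,H]$. Since $K$ is maximal in $H$ and $L\not\leqslant K$, we have $KL=H$; hence $[L:K\wedge L]=[H:K]=p$ and $A\cap\widehat{H/L}=\widehat{H/KL}$ is trivial. The kernel of restriction $r_L\colon\widehat H\to\widehat L$ is $\widehat{H/L}$, so $r_L$ is injective on $X$ and on $A$; moreover $r_L(A)\subseteq\widehat{L/(K\wedge L)}=:\overline{A}_L$ with equality by an order count. Finally, every $\tau\in X$ restricts on $K\wedge L$ to $\chi|_{K\wedge L}$, and since $K\wedge L$ has odd order squaring is bijective on $\widehat{K\wedge L}$, on $A$, and on $\overline{A}_L$; in particular $r_L(\tau_0)^{-2}\in\overline{A}_L$ iff $(\chi|_{K\wedge L})^2=1$ iff $\chi|_{K\wedge L}=1$, that is, exactly when $L\in S_\chi$.

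Unwinding the definition, $\tau_0 a\equiv\tau_0 a'\pmod{P_L}$ iff $r_L(a)=r_L(a')$ (equivalently $a=a'$, by injectivity) or $r_L(aa')=r_L(\tau_0)^{-2}$. Claim (3) is then immediate, since for fixed $\tau$ there is at most one partner. For claim (4): if $L\notin S_\chi$ then $r_L(\tau_0)^{-2}\notin r_L(A)=\overline{A}_L$, so the second alternative never occurs and $P_L$ is discrete. For claim (5): if $L\in S_\chi$ there is a unique $c_L\in A$ with $r_L(c_L)=r_L(\tau_0)^{-2}$, and $P_L$ is exactly the pairing $a\leftrightarrow a^{-1}c_L$ of $A$; its unique fixed point is the square root of $c_L$, which one checks restricts to $1_L$, yielding the single singleton $\{1_L^X\}$ and $(p-1)/2$ doubletons. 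Claim (1) follows by restricting an equality or conjugate-equality from $L$ down to any $M\leqslant L$. For claim (2): $M\leqslant L$ and $L\in S_\chi$ force $K\wedge M\leqslant K\wedge L\leqslant\ker\chi$, so $M\in S_\chi$; by (5) both $P_L$ and $P_M$ have exactly $(p-1)/2+1$ blocks, and since $P_L$ refines $P_M$ by (1) they must coincide. For claim (6): if one of $M,L$ lies outside $S_\chi$ the corresponding partition is discrete by (4) and the assertion is vacuous; if both lie in $S_\chi$, then $P_M\neq P_L$ forces $c_M\neq c_L$, while two distinct elements $\tau_0 a,\tau_0 a'$ simultaneously $P_M$- and $P_L$-equivalent would satisfy $aa'=c_M$ and $aa'=c_L$, a contradiction.

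The content here is organizational rather than deep: the single fact carrying the weight is the injectivity of $r_L$ on $X$, which guarantees that two characters in $X$ can only be $P_L$-identified via complex conjugation, after which everything collapses into the cyclic group $A\cong\Z/p$. The point requiring care is the interaction of the $\pm$ ambiguity from conjugation with the order bookkeeping — specifically that $r_L(\tau_0)^{-2}\in\overline{A}_L$ holds precisely when $L\in S_\chi$, since this single dichotomy separates the discrete case~(4) from the one-singleton-plus-pairs case~(5).
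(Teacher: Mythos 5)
Your proof is correct, and it reaches the same two load-bearing facts as the paper's argument --- that $R_L^H$ is injective on $X$ because $K\lessdot H$ forces $K\vee L=H$, and that odd order kills self-conjugate nontrivial characters --- but it packages them differently. The paper argues structurally: claim (5) comes from Mackey's formula (showing $R_L^H(X)=I_{K\wedge L}^L(1_{K\wedge L})$ is $\Gal(\C/\R)$-invariant), claim (6) passes through the join $L\vee L'$ and reduces to claims (4) and (2), and claim (2) is deduced from (1), (3), (5) by a maximality observation. You instead coordinatize $X$ as a coset of the prime-order cyclic group $A=\widehat{H/K}$, after which each $P_L$ with $L\in S_\chi$ becomes the explicit involution $a\mapsto a^{-1}c_L$ for a single element $c_L\in A$; claim (5) then follows from bijectivity of squaring in odd order (the fixed point is the square root of $c_L$, which restricts to $1_L$), claim (6) follows directly from $aa'=c_M=c_L$ without invoking the join, and claim (2) follows by counting blocks. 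The dichotomy you isolate --- $r_L(\tau_0)^{-2}\in r_L(A)$ iff $\chi|_{K\wedge L}=1$ iff $L\in S_\chi$ --- is exactly the content of the paper's contrapositive argument for claim (4), just made quantitative. Your version is more elementary (no Mackey's formula) and makes the combinatorial structure of the partitions completely explicit; the paper's version generalizes more readily since it never uses that $[H:K]$ is prime beyond $K\lessdot H$. The one cosmetic issue is the typo $X=I_K^L(\chi)$ in the statement, which you correctly read as $I_K^H(\chi)$.
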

\begin{proof}
	Since restriction commutes with the action of $\Gal(\C/\R)$, if $M\leqslant L$ and $\tau|_L \equiv \tau'|_L\mod\Gal(\C/\R)$ then we must also have $\tau|_M \equiv \tau'|_M\mod\Gal(\C/\R)$, so claim (1) is immediate. Claim~(2) is a corollary of claims~(1),~(3), and~(5), since any partition satisfying the conclusion of claim~(5) is maximal among partitions whose blocks have cardinality at most two.
	
	Note that since $K\lessdot H$ we have $K\vee L=H$ for all $L\in (K,H]$, so a character $\tau\in \widehat{H}$ is determined by its restriction to $K$ and $L$. In particular, since $\tau|_K=\chi$ for all $\tau\in X$, a character $\tau\in X$ is determined by its restriction to $L$. Thus
	\[R_L^H|_X\colon X\to\widehat{L}\]
	is injective. Since $P_L$ is the preimage under $R_L^H|_X$ of the partition of $\widehat{L}$ into $\Gal(\C/\R)$-orbits, claim~(3) follows immediately.
	
	We prove claim~(4) by contrapositive. Suppose $P_L$ is not discrete, so there exists some $\tau\neq\tau'\in X$ such that $\tau\equiv\tau'\mod P_L$. Since $R_L^H|_X$ is injective, we must have $\tau|_L = \overline{\tau'}|_L$. But then
	\[\chi|_{K\wedge L} = \tau|_{K\wedge L} = \overline{\tau'}|_{K\wedge L} = \overline{\chi}|_{K\wedge L}.\]
	Since $|K\wedge L|$ is odd the only self-conjugate irreducible is $1_{K\wedge L}$, so we must have $\chi|_{K\wedge L} = 1_{K\wedge L}$, or in other words $K\wedge L\leqslant\ker\chi$ and hence by definition $L\in S_\chi$.
	
	On the other hand, if $L\in S_\chi$ then by Mackey's formula
	\[R_L^H(X) = I_{K\wedge L}^L(\chi|_{K\wedge L}) = I_{K\wedge L}^L(1_{K\wedge L}).\]
	Since induction commutes with the action of $\Gal(\C/\R)$, this implies $R_L^H(X)$ is $\Gal(\C/\R)$-invariant, so for every $\tau\in X$ there exists some $\tau'\in X$ such that $\tau|_L = \overline{\tau'}|_L$. If $\tau|_L\neq 1_L$ then again using that $|L|$ is odd we must have $\tau'\neq\tau$. Thus letting $1_L^X\in X$ be the unique preimage of $1_L$, claim~(5) follows.
	
	Finally, we again prove claim~(6) by contrapositive. Suppose $M\neq L\in (K,H]$ are such that there exist blocks $B\in P_L$ and $B'\in P_{L'}$ with $|B\cap B'|\geqslant 2$. Then we necessarily must have $B=B'=\{\tau,\tau'\}$ where $\tau|_L = \overline{\tau'}|_L$ and $\tau|_{L'} = \overline{\tau'}|_{L'}$. But then $\tau|_{L\vee L'} = \overline{\tau'}|_{L\vee L'}$, and hence $L\vee L'\in S_\chi$ by claim~(4). Thus by claim~(2) we have
	\[P_L = P_{L\vee L'} = P_{L'}.\]
\end{proof}

We need one more short lemma which is an easy corollary of \cref{divisor sum}.

\begin{lem}\label{log estimate}
	For any rank two Abelian $p$-group $G$ with $p\geqslant 3$,
	\[\sum_{\1\neq H\leqslant G}\sum_{k=2}^\infty \frac{|H|^{-k}}{k}\leqslant 1.\]
	In particular, for any $T\subseteq \Sub(G)\setminus\{\1\}$ we have
	\[\left|\sum_{H\in T}\log\left(1+\frac{1}{|H|}\right) - \frac{1}{|H|}\right|\leqslant1.\]
\end{lem}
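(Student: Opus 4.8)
The plan is to reduce everything to the geometric-series estimate already recorded in \cref{divisor sum}.

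First I would treat the double sum $\sum_{\1\neq H\leqslant G}\sum_{k=2}^\infty |H|^{-k}/k$. Since all terms are non-negative, I can interchange the order of summation (Tonelli) and write it as $\sum_{k=2}^\infty \frac1k\sum_{\1\neq H\leqslant G}|H|^{-k}$. For each fixed $k\geqslant 2$ the inner sum is at most $3p^{1-k}$ by the last estimate in \cref{divisor sum} — this is exactly where the hypothesis $p\geqslant 3$ is used. Factoring out $3p$, it remains to bound $3p\sum_{k=2}^\infty p^{-k}/k$. Using $1/k\leqslant 1/2$ for $k\geqslant 2$ and summing the resulting geometric series gives
\[\sum_{k=2}^\infty \frac{p^{-k}}{k}\leqslant \frac12\cdot\frac{p^{-2}}{1-p^{-1}}=\frac{1}{2p(p-1)},\]
so the whole double sum is at most $\frac{3}{2(p-1)}\leqslant \frac34<1$ since $p\geqslant 3$. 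This proves the first inequality.

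For the ``in particular'' clause I would invoke the Taylor expansion $\log(1+x)=\sum_{k\geqslant 1}(-1)^{k+1}x^k/k$, valid for $0\leqslant x<1$ (and here $x=1/|H|\leqslant 1/p<1$), so that $\log(1+x)-x=\sum_{k\geqslant 2}(-1)^{k+1}x^k/k$ and hence $|\log(1+x)-x|\leqslant \sum_{k\geqslant 2}x^k/k$ by the triangle inequality. Applying this termwise with $x=1/|H|$, then the triangle inequality over $H\in T$, and finally enlarging the index set from $T$ to all nontrivial subgroups (which only increases the sum, all terms being non-negative), I get
\[\left|\sum_{H\in T}\left(\log\!\left(1+\frac{1}{|H|}\right)-\frac{1}{|H|}\right)\right|\leqslant \sum_{H\in T}\sum_{k=2}^\infty\frac{|H|^{-k}}{k}\leqslant \sum_{\1\neq H\leqslant G}\sum_{k=2}^\infty\frac{|H|^{-k}}{k}\leqslant 1,\]
by the first part of the lemma.

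There is no real obstacle here: the lemma is a routine consequence of \cref{divisor sum}. The only points requiring a little care are justifying the interchange of summation (legitimate by non-negativity), keeping the $p\geqslant 3$ hypothesis visible both when invoking \cref{divisor sum} and when bounding $\frac{3}{2(p-1)}\leqslant 1$, and observing that the logarithm series converges in the relevant range because $1/|H|\leqslant 1/p<1$.
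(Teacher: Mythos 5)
Your proposal is correct and follows essentially the same route as the paper: interchange the order of summation, apply the $k\geqslant 2$ estimate $\sum_{\1\neq H\leqslant G}|H|^{-k}\leqslant 3p^{1-k}$ from \cref{divisor sum}, and bound the resulting series in $p$ (the paper evaluates $3p\sum_{k\geqslant2}p^{-k}/k$ exactly as $3p(\log(\tfrac{1}{1-1/p})-1/p)$, while you use $1/k\leqslant 1/2$ and a geometric series, which is an equally valid and slightly more elementary estimate). Your explicit derivation of the ``in particular'' clause via the alternating Taylor series of $\log(1+x)$ is exactly the intended (and in the paper implicit) step, so there is nothing to correct.
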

\begin{proof}
	Exchanging the order of summation and applying \cref{divisor sum}, we can easily bound
	\[\sum_{\1\neq H\leqslant G}\sum_{k=2}^\infty \frac{|H|^{-k}}{k}\leqslant 3p\sum_{k=2}^\infty \frac{p^{-k}}{k} = 3p\left(\log\left(\frac{1}{1-1/p}\right) - 1/p\right),\]
	which is less than $1$ for $p\geqslant 3$.
\end{proof}

\begin{proof}[Proof of \cref{inductive lemma}]
	Fix some $(\chi,K,H)\in S_G^1$ and let
	\[X = (J[D])_K^H(\chi)\subseteq I_K^H(\chi).\]
	Let $n = |X|\geqslant C$. For each $L\in (K,H]$, let $P_L$ be the restriction to $X$ of the partition defined in \cref{lem-part}. All of the conclusions of \cref{lem-part} still hold for $P_L$ restricted to $X$ except perhaps claim~(5), since restricting to $X$ may lose the $1_L^X$ block and/or split some blocks in half.
	
	Now let
	\[T' = \{H\in T\cap (K,H]:\forall \1\neq M\leqslant K\wedge H,\,\chi|_M\notin D(M)\}.\]
	For each $L\in T'$ let
	\[X_L = (J[D])_{\1}^L(1)\setminus\{1_L\}\]
	and
	\[Y_L = R_L^H(X)\setminus\{1_L\}.\]
	Finally, let
	\[Z_L = Y_L\cup\overline{Y_L}.\]
	
	Note that if $K\wedge L\not\leqslant\ker\chi$ then $Y_L = R_L^H(X)$ and $Y_L\cap\overline{Y_L}=\emptyset$, whereas if $K\wedge L\leqslant\ker\chi$ then $Z_L = Y_L$. Furthermore, we always have $Y_L\subseteq X_L$. Indeed, for all $\tau\in X$ and all $\1\neq M\leqslant L$, either $M\in (K\wedge L,L]\subseteq (K,H]$ in which case
	\[(\tau|_L)|_M = \tau|_M\notin D(M),\]
	or else $M\leqslant K\wedge L$ in which case
	\[(\tau|_L)|_M = (\tau|_K)|_M = \chi|_M\notin D(M).\]
	Since $D$ is $\Gal(\C/\R)$-invariant so is $X_L$, so this also implies $Z_L\subseteq X_L$.
	
	Now $|Z_L|\leqslant 2|X|\leqslant 2p$ since $\operatorname{rk}(K,H)=1$. Since $D$ is $C$-clustered we also have
	\[|X_L|\geqslant C^{\log_p(|H|)}-1 \geqslant (C/p)^{\log_p(|H|)}|H|-1\geqslant \gamma^{-1}|H|.\]
	Since $\tau(L)$ is sampled uniformly from $X_L$, we have
	\[p_L:=\Pr(\tau(L)\in Y_L)\leqslant \frac{2p}{|X_L|}\leqslant 2\gamma p\frac{1}{|H|}.\]
	Thus by the Chernoff bound, for all $k$ we have
	\begin{align*}
		\Pr(\#\{L\in T':\tau(L)\in Y_L\}\geqslant k)&\leqslant \inf_{t>0} e^{-tk}\prod_{L\in T}\left(1+p_L(e^t-1)\right)\\
		&\leqslant e^{-k\log(2)}\prod_{L\in T}\left(1+p_L\right)\\
		&\leqslant e^{-k\log(2)+\sum_{L\in T}\log(1+p_L)}\\
		&\leqslant e^{-k\log(2)+2\gamma p+\sum_{L\in T} p_L}\\
		&\leqslant e^{-p\left(\frac{k}{p}\log(2)-2(\alpha+1)\gamma\right)},
	\end{align*}
	where we use \cref{log estimate} in the second-to-last inequality. In particular, taking $k=3(\alpha+1)\gamma p$ and assuming
	\[p>\frac{\delta}{(3 \log(2) - 2)(\alpha + 1) \gamma}\]
	for some $\delta>0$, we obtain
	\[\Pr(\#\{L\in T':\tau(L)\in Y_L\}\geqslant 3(\alpha+1)\gamma p)\leqslant e^{-\delta}.\]
	Note that with our hypothesis on $p$ we can take
	\[\delta = \frac{\rho}{5}C^{1/4},\]
	giving
	\[\Pr(\#\{L\in T':\tau(L)\in Y_L\}\geqslant 3(\alpha+1)\gamma p)\leqslant e^{-\frac{\rho}{5}C^{1/4}}.\]
	
	Now let $T''$ be the random variable defined by
	\[T'' = \{L\in T':\tau(L)\in Z_L\},\]
	and condition on the event that
	\[T'' = S\]
	where $S\subseteq T'$ is a fixed subset with
	\[m:=|S|\leqslant 3(\alpha+1)\gamma p.\]
	For each $L\in S_\chi\cap S$ let $A_L = \{1_L^X\}$, and for each $L\in S\setminus S_\chi$ let $A_L=\star$. Let $b_L\in P_L\setminus \{A_L\}$ be the block corresponding to $\{\tau(L),\overline{\tau(L)}\}$. By \cref{lem-part} this sequence satisfies all of the hypotheses needed to apply \cref{lem-coupon}, so we have
	\[\Pr\left(|X\setminus \bigcup_{i=1}^m b_i|\leqslant \frac{e^{-5m/n}}{2}|X|:T''=S\right)\leqslant n^{3/4}e^{-\frac{e^{-5m/n}}{5}n^{1/4}}.\]
	But by definition
	\[X\setminus \bigcup_{i=1}^m b_i = (J[D^T])_K^H(\chi),\]
	so since
	\[\rho = e^{-15(\alpha+1)\gamma p/C}\leqslant e^{-5m/n}\]
	and $n\leqslant p$, we have
	\[\Pr\left(|(J[D^T])_K^H(\chi)|\leqslant \frac{\rho}{2}C:T''=S\right)\leqslant p^{3/4}e^{-\frac{\rho}{5}C^{1/4}}.\]
	By the union bound, we then have unconditionally
	\[\Pr\left(|(J[D^T])_K^H(\chi)|\leqslant \frac{\rho}{2}C\right)\leqslant 2p^{3/4}e^{-\frac{\rho}{5}C^{1/4}}.\]
	
	Finally, applying the union bound across all choices of $\chi,H$ (the number of which can be loosely bounded by $p^{3n}$), the result follows.
\end{proof}

\section{Negative results in higher rank}

Having obtained positive results on the saturation conjecture in ranks one and two, we now examine the saturation conjecture for higher rank Abelian groups. We will show that Abelian groups of rank three or more \emph{never} satisfy the saturation conjecture, and in fact as the rank increases the ratio of unrealized to realized saturated transfer systems grows extremely quickly (faster than a quantity which is double-exponential in the rank squared). 

\subsection{Failure of the saturation conjecture in rank three}

Let $G = (C_p)^3$ for some (arbitrary) prime $p$. In this section we will identify an explicit saturated transfer system $\RR$ on $G$ such that no linear isometries operad can realize $\RR$.

\begin{rem}
	Note that if $G$ is any Abelian group of rank at least three then $(C_p)^3\leqslant G$ for some prime $p$. Then we can trivially extend $\RR$ to a saturated transfer system $\RR^G$ on $G$ by defining $K\to H\in\RR^G$ if and only if $K=H$ or $H\leqslant (C_p)^3$ and $K\to H\in \RR$. If $U$ were a $G$-universe such that $\Tr(U) = \RR^G$, then $U|_{(C_p)^3}$ would be a $(C_p)^3$-universe such that $\Tr(U|_{(C_p)^3})=\RR$, a contradiction. Thus in fact our results in this section imply
	
	\begin{thm}\label{negative}
		If $G$ is an Abelian group which is neither cyclic nor rank two (i.e., the size of a minimal generating set for $G$ is at least three), then there exist saturated transfer systems on $G$ that cannot be realized by linear isometries operads.
	\end{thm}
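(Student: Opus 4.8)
The plan is to reduce the general case to the case $G = (C_p)^3$, which is handled by the construction occupying the remainder of this section; the reduction itself is the one indicated in the preceding remark. First I would invoke the structure theorem for finite Abelian groups: if $G$ has no generating set of size two, then for some prime $p$ the Sylow $p$-subgroup of $G$ has at least three cyclic factors in its primary decomposition, and since each such factor contains a copy of $C_p$, the group $G$ contains a subgroup isomorphic to $(C_p)^3$. Let $\RR$ be a saturated transfer system on $(C_p)^3$ realized by no linear isometries operad — the existence of such an $\RR$ being the substance of this section. Extend it to $G$ by setting $K \to H \in \RR^G$ if and only if $K = H$, or $H \leqslant (C_p)^3$ and $K \to H \in \RR$. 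A short check shows $\RR^G$ is a transfer system: conjugation-closure is automatic since $G$ is Abelian, and pullback-closure holds because a relation coming from $\RR$ pulls back inside $(C_p)^3$ using pullback-closure of $\RR$, while an identity relation pulls back trivially. It is moreover saturated, since a triple $K \leqslant M \leqslant H$ witnessing the saturation hypothesis $K \to H \in \RR^G$ either lies entirely inside $(C_p)^3$, where saturation of $\RR$ applies, or has $K = H = M$.

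Next I would use that $\Tr$ is compatible with restriction: for any $G$-universe $U$ and any $H \leqslant G$, one has $R_K^G(U) = R_K^H(\Res_H^G U)$ for all $K \leqslant H$, so the restriction of $\Tr(U)$ to $\Sub(H)$ equals $\Tr(\Res_H^G U)$. Consequently, if some $G$-universe $U$ satisfied $\Tr(U) = \RR^G$, then $\Res_{(C_p)^3}^G U$ would be a $(C_p)^3$-universe with $\Tr(\Res_{(C_p)^3}^G U) = \RR^G|_{(C_p)^3} = \RR$, contradicting the choice of $\RR$. This establishes the theorem granting the rank-three case.

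It remains to construct the bad $\RR$ on $G = (C_p)^3$, which is the hard part. I would identify $\Sub(G)$ with the lattice of $\F_p$-subspaces of $\F_p^3$ and $\widehat{G}$ with the dual space $(\F_p^3)^\ast$, so that for $M \leqslant G$ the annihilator $M^\perp \leqslant \widehat{G}$ has complementary dimension. Writing $S \subseteq \widehat{G}$ for the character set of a hypothetical realizing universe $U$ (so $S$ contains the trivial character, is $\Gal(\C/\R)$-stable, and $\Tr(U) = \Tr(S)$), a direct computation with Frobenius reciprocity shows that for $K \leqslant H$ one has $K \to H \in \Tr(U)$ if and only if $S + K^\perp = S + H^\perp$; equivalently, when $H$ is a plane, the image of $S$ in $\widehat{H}$ is a union of cosets of the line $K^\perp/H^\perp$. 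The plan is then to fix three planes $P_1, P_2, P_3 \leqslant G$ in general position — so that the three meets $P_1 \wedge P_2$, $P_1 \wedge P_3$, $P_2 \wedge P_3$ are distinct lines — together with an explicit list of ``positive'' and ``negative'' relations forming a saturated transfer system $\RR$, chosen so that the positive relations force $S + P_i^\perp$ to become invariant under translation by successively larger subgroups (propagating exactly as in the $(I,\RR)$-stabilization of the diagram $D_U$), until one reaches a translation invariance that some negative relation of $\RR$ explicitly forbids. The mechanism enabling this in rank three but not in ranks one or two is precisely the failure of $R_{P_i \wedge P_j}^G$ to commute with intersection — the ``laughably poor properties'' of $U \mapsto \Tr(U)$ noted earlier. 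I expect the two delicate points to be: (i) checking that the chosen list of relations genuinely is a saturated transfer system, closed under pullback and transitivity; and (ii) the infeasibility argument itself, which amounts to showing that a small system of coset-union constraints on subsets of $\F_p^3$ admits no simultaneous solution, and where carefully tracking the propagation of the positive constraints is the main obstacle.
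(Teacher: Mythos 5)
Your reduction step is fine and is exactly the paper's: embed $(C_p)^3\leqslant G$, extend a bad saturated transfer system $\RR$ on $(C_p)^3$ to $\RR^G$ by adjoining only identity relations outside $\Sub((C_p)^3)$, and use compatibility of $\Tr$ with restriction of universes. Your annihilator reformulation of the realizability condition ($K\to H\in\Tr(U)$ iff $S+K^\perp=S+H^\perp$ for the character set $S$) is also correct. The genuine gap is everything after that: the theorem's entire content is the existence of a specific unrealizable saturated transfer system on $(C_p)^3$, and you do not produce one. You say you would ``fix three planes in general position together with an explicit list of positive and negative relations'' and then derive a contradiction by ``propagating translation invariances,'' but no such list is given, no verification that it forms a saturated transfer system is given, and no infeasibility argument is carried out — you yourself flag both as the delicate points you expect to be hard. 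As written this is a plan, not a proof, and it is not evident the plan closes: a single positive relation $K\to H$ only forces $S+H^\perp$ to be $K^\perp$-translation-invariant, and it is unclear how finitely many such constraints, combined via the failure of restriction to commute with intersections, are supposed to collide with a negative relation; some quantitative input is needed, not just propagation of invariances.

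For contrast, the paper's construction uses one plane $H<V=(C_p)^3$ and takes $\RR$ generated by the single relation $0\to H$ (so $W'\to W\in\RR$ iff $W'=W$ or $W\leqslant H$). If $\Tr(U)=\RR$ with $X=\w{V}\setminus U$, the positive relation forces every fiber of $\pi_H\colon\w{V}\to\w{H}$ to meet $U$, while each of the $p^2$ negative relations $0\to L$ ($L$ a line not in $H$) produces a full fiber $X_L=\pi_L^{-1}(\xi_L)\subseteq X$ of size $p^2$ mapping bijectively to $\w{H}$. The contradiction is then purely counting: the covering index $c(\tau)=\#\{L:\tau\in X_L\}$ has average $p$ on every $\pi_H$-fiber, and a second-moment computation (using $|X_L\cap X_{L'}|=p$ for $L\neq L'$) shows some fiber has variance at most $p-1$, whence Chebyshev plus the fact that probabilities on a $p$-point fiber are multiples of $1/p$ forces $c>0$ on that whole fiber — so that fiber lies in $X$, contradicting the positive relation. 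This quantitative pigeonhole step, balancing the one positive relation against the $p^2$ negative ones, is the missing core of your argument; without it (or a worked-out substitute along your coset-constraint lines) the rank-three case, and hence the theorem, is not established.
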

\end{rem}

We can think of $G$ as a three-dimensional vector space $V=G$ over the field $\F_p$ with $p$ elements. Fix a plane $H<V$ and let $\RR$ be the saturated transfer system generated by $\{0\}\to H$. Explicitly, for $W'\leqslant W\leqslant V$, we have $W'\to W\in\RR$ if and only if either $W=W'$ or $W\leqslant H$. Let $\LL$ be the set of lines $L<V$ for which $L\not<H$; by construction we have $\{0\}\to L\notin\RR$ for all $L\in \LL$. For each subspace $W\leqslant V$, we let $\pi_W\colon \w{V}\to \w{W}$ be the restriction map. Note $\LL$ is exactly the set of points in the affine plane $\PP(V)\setminus\PP(H)$, so $|\LL| = p^2$.

Let $U\in\U_V$. Suppose for contradiction that $\Tr(U) = \RR$ and let $X = \w{V}\setminus U$. Thus in particular, for each $\chi\in\w{H}$ we have $\pi_H^{-1}(\chi)\not\subseteq X$, and for all $L\in\LL$ there exists some $\xi_L\in\w{L}$ such that $\pi_L^{-1}(\xi_L)\subseteq X$. We define
\[X_L = \pi_L^{-1}(\xi_L)\subseteq X;\]
note we have $|X_L| = p^2$. Furthermore, given any $\chi\in \w{H}$, since $H\cap L = 0$ we can construct a unique character $\tau = \chi\otimes \xi_L\in\w{V}$ such that $\pi_H(\tau)=\chi$ and $\pi_L(\tau) = \xi_L$, so $\pi_W\colon X_L\to \w{H}$ is a bijective covering.

For each $\tau\in\w{V}$ we define the \emph{covering index}
\[c(\tau):=\#\{L\in\LL:\tau\in X_L\}.\]
If $c(\tau)>0$, then by definition there exists some $L\in\LL$ such that $\tau\in X_L$, and hence in particular $\tau\in X$. Thus to derive a contradiction it suffices to find some $\chi\in\w{H}$ such that every $\tau\in\pi_H^{-1}(\chi)$ has positive covering index.

Since $\pi_W\colon X_L\to \w{H}$ is bijective for each $L$, we know for each $\chi\in\w{H}$ that
\[\sum_{\tau\in\pi_H^{-1}(\chi)} c(\tau) = |\LL| = p^2.\]
Thus on each fiber of $\pi_H$, the average value of $c$ is $p$. Thus if $c$ has small variation on some fiber $\pi_H^{-1}(\chi)$ then we can expect $c(\tau)$ is close to $p$ for all $\tau\in \pi_H^{-1}(\chi)$ and hence in particular we should expect an upper bound on the variation to force $c(\tau)>0$ for all such $\tau$. This suggests we should look for some $\chi\in\w{H}$ that minimizes the variance
\[V(\chi) := -p^2 + \frac{1}{p}\sum_{\tau\in\pi_H^{-1}(\chi)} c(\tau)^2.\]

If $L\neq L'\in\LL$, then letting $H'$ be the plane generated by $L$ and $L'$, we can uniquely lift $\xi_L$ and $\xi_{L'}$ to a character $\xi = \xi_L\otimes \xi_{L'}\in\w{H'}$, so $X_L\cap X_{L'} = \pi_{H'}^{-1}(\xi)$ has cardinality $p$. Of course if $L = L'$ then $X_L\cap X_{L'} = X_L$ has cardinality $p^2$, so by double counting we compute
\begin{align*}
	E[V(\chi)] &= \frac{1}{p^2}\sum_{\chi\in \w{H}} V(\chi)\\
	&= -p^2 + \frac{1}{p^3}\sum_{\tau\in\w{V}} c(\tau)^2\\
	&= -p^2 + \frac{1}{p^3}\sum_{L,L'\in\LL} |X_L\cap X_{L'}|\\
	&= -p^2 + \frac{1}{p^3}\left((p^4-p^2)p + p^2\cdot p^2\right)\\
	&= p-1.
\end{align*}

Thus we can find some $\chi$ with variance $V(\chi)\leqslant p-1$. Now suppose we choose $\tau\in\pi_H^{-1}(\chi)$ uniformly at random and let $Y$ be the random variable $Y = c(\tau)$. We have $E[Y] = p$ and $V(Y) \leqslant p-1$. By Chebyshev's inequality this implies
\[\operatorname{Pr}(Y = 0) \leqslant \operatorname{Pr}(|Y-E[Y]|\geqslant p)\leqslant \frac{V(Y)}{p^2}<\frac{1}{p}.\]
But $Y$ is derived deterministically from the uniform distribution on a set with cardinality $p$, so the probability of any event is a multiple of $\frac{1}{p}$. Thus we must in fact have $\operatorname{Pr}(Y = 0) = 0$, i.e. $c(\tau)>0$ for all $\tau\in\pi_H^{-1}(\chi)$.

\begin{rem}
	The specific $\RR$ investigated above was heuristically chosen to make the constraints on $U$ as strong as possible. Intuitively, any specification of the form $0\to W\in\RR$ constrains $U$ to be large, and this constraint is stronger when the dimension of $W$ is larger; conversely, any specification of the form $0\to W\notin\RR$ constrains $U$ to be small, and this constraint is stronger when the dimension of $W$ is smaller. With our choice of $\RR$ we aimed to maximize the tension between these two opposing constraints.
\end{rem}

\subsection{High rank Abelian groups}\label{sec-high rank}

The preceding argument shows that if $G$ has rank at least three then there exist saturated transfer systems that cannot be realized by linear isometries operads. But it's still not clear whether these failure cases are exceptional or common. In this section we show by a simple counting argument that for $G = (C_p)^n$ with $n$ large, almost all saturated transfer systems are not realized by linear isometries operads.

Of course the number of weak-equivalence classes of linear isometries operads is at most the number of isomorphism classes of $G$-universes, which is
\[|\U_G|\leqslant |V_G| = 2^{|\w{G}|} = 2^{p^n}.\]

On the other hand, we can find a lower bound for the number of saturated transfer systems as follows. For each $i\leqslant n$, let $\Sub(G)^i\subseteq\Sub(G)$ be the set of subgroups of order $p^i$. For any $S\subseteq\Sub(G)$, define a poset endomorphism
\[f_S\colon \Sub(G)\to\Sub(G)\]
via the formula
\[f_S(H)=\bigvee_{\substack{K\leqslant H\\K\in S}} K.\]
This endomorphism is clearly decreasing and idempotent, i.e. $f_S$ is an \emph{interior operator}. Note that for all $H\in S$ we have $f_S(H) = H$; furthermore, if $S\subseteq \Sub(G)^i$ for some $i$, then
\[S = \{H\in \Sub(G)^i:f_S(H)=H\}.\]
Thus every subset of $\Sub(G)^i$ defines a unique interior operator. By~\cite{bao} the set of interior operators are in bijection with the set of saturated transfer systems, so this shows that for all $i\leqslant n$, the number of saturated transfer systems is at least
\[2^{|\Sub(G)^i|} = 2^{\binom{n}{i}_p},\]
where $\binom{n}{i}_p$ is the $p$-Binomial coefficient
\[\binom{n}{i}_p = \frac{(p^n-1)(p^{n-1}-1)\cdots(p^{n-i+1})}{(p^i-1)(p^{i-1}-1)\cdots(p-1)};\]
this formula is easily derived by double-counting the number of complete flags in the $n$-dimensional $\mathbb{F}_p$-vector space $G$.

Now for simplicity take $n=2k$ to be even. By the truncated geometric series formula,
\[\binom{n}{k}_p\geqslant p^{k(n-k)} = p^{n^2/4}.\]
Thus the number of saturated transfer systems is at least $2^{p^{n^2/4}}$. Comparing this to our prior upper bound of on the number of weak equivalence classes of linear isometries operads gives
\[\frac{\#\{\text{realized saturated transfer systems}\}}{\#\{\text{unrealized saturated transfer systems}\}}\leqslant 2^{-p^{n^2/4}+p^n} = 2^{-p^{\Omega(n^2)}}.\]

\bibliography{citations}
\bibliographystyle{alpha}

\end{document}